\numberwithin{equation}{section}
\newtheorem{theorem}{Theorem}
\newtheorem*{theorem*}{Theorem}
\newtheorem{lemma}{Lemma}
\newtheorem*{lemma*}{Lemma}
\newtheorem*{corollary*}{Corollary}
\newtheorem*{propositon*}{Proposition}
\theoremstyle{definition}
\newtheorem{definition}{Definition}
\newtheorem*{definition*}{Definition}
\newtheorem*{remark*}{Remark}
\newtheorem*{example*}{Example}
\DeclareMathOperator{\Li}{Li}
\begin{document}
\title[An explicit parity theorem for multiple polylogarithms]{An explicit parity theorem for multiple polylogarithms}
\author{RYOTA UMEZAWA}
\address{National Institute of Technology, Toyota College, 
2-1 Eiseicho, Toyota, Aichi, Japan, 471-8525.}
\email{umezawa@toyota-ct.ac.jp}
\subjclass[2020]{Primary 11M32, Secondary 33B30}
\keywords{Multiple polylogarithms, Parity theorem, Parity result, Depth reduction}
\date{}
\maketitle

\begin{abstract}
In 2017, E. Panzer proved the parity theorem for multiple polylogarithms. While his proof is simple and constructive, it does not yield a general explicit formula. Inspired by M. Hirose's recent work on the parity theorem for multiple zeta values, this paper presents an explicit formula for the parity theorem for multiple polylogarithms.
\end{abstract}
\section{Introduction}\label{se:intro}
For $\mathbf{k} = (k_{1},\dots,k_{d}) \in \mathbb{Z}_{>0}^{d}$ with $k_{d} >1$, the values 
\[\zeta(\mathbf{k}) = \zeta(k_{1},\dots,k_{d}) \coloneqq \sum_{0 < m_{1} < \cdots < m_{d}} \frac{1}{m_{1}^{k_{1}} \cdots m_{d}^{k_{d}}}\]
are called multiple zeta values. Additionally, $|\mathbf{k}| \coloneqq k_{1}+\dots+k_{d}$ is called the weight, and $\textrm{dep}(\mathbf{k}) \coloneqq d$ is called the depth. We denote the index of weight $0$ and depth $0$ by $\emptyset$, and consider $\zeta(\emptyset) = 1$. There is a well-known theorem concerning multiple zeta values, called the parity theorem or the parity result, which has been proven by several authors, such as  \cite{Br}, \cite{IKZ}, \cite{J}, \cite{M} and \cite{T}.
\begin{theorem}[Parity theorem for multiple zeta values]
The multiple zeta value $\zeta(k_{1},\dots,k_{d})$ can be expressed as a $\mathbb{Q}$-linear combination of products of multiple zeta values of depth at most $d-1$, provided that its depth and weight have opposite parity.
\end{theorem}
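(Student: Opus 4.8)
The plan is to argue by induction, on the weight $n=|\mathbf{k}|$ and, for fixed weight, on the depth $d$, using the iterated-integral representation of multiple zeta values on $\mathbb{P}^{1}\setminus\{0,1,\infty\}$ together with the action on such integrals of the six M\"obius transformations permuting $\{0,1,\infty\}$. Writing $\omega_{0}=\frac{dt}{t}$ and $\omega_{1}=\frac{dt}{1-t}$, an admissible index $\mathbf{k}=(k_{1},\dots,k_{d})$ satisfies
\[
  \zeta(\mathbf{k})=\int_{0}^{1}(\omega_{1}\omega_{0}^{k_{1}-1})(\omega_{1}\omega_{0}^{k_{2}-1})\cdots(\omega_{1}\omega_{0}^{k_{d}-1}),
\]
where the integral is taken along $[0,1]$ and the depth $d$ is exactly the number of $\omega_{1}$-letters in the associated word $w=w(\mathbf{k})$. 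The base cases are classical: Euler's evaluation of $\zeta(2m)$ settles depth one (so the assertion is to be read for $d\ge 2$), and the lowest-weight depth-two instances are instances of duality. Throughout I use shuffle-regularized iterated integrals $I_{\gamma}(u)$ along paths $\gamma$ joining the tangential base points at $0,1,\infty$: these are shuffle-algebra homomorphisms, they obey path composition $I_{\gamma_{1}\gamma_{2}}(u)=\sum_{u=u'u''}I_{\gamma_{1}}(u')\,I_{\gamma_{2}}(u'')$, and a divergent word regularizes to a polynomial in the two regularization constants whose coefficients are $\mathbb{Q}$-linear combinations of products of convergent iterated integrals with no larger $\omega_{1}$-count.

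\textbf{The mechanism.} Duality $\zeta(\mathbf{k})=\zeta(\mathbf{k}^{\vee})$, which comes from the substitution $t\mapsto 1-t$, exchanges the depths $d$ and $n-d$ but cannot by itself lower the depth. The extra input is the inversion $t\mapsto 1/t$: it fixes the puncture $1$, carries $[0,1]$ to the path $[\infty,1]$, and acts on forms by $\omega_{0}\mapsto-\omega_{0}$, $\omega_{1}\mapsto\omega_{0}+\omega_{1}$, so that pulling $w$ back and expanding multilinearly reproduces $w$ itself with coefficient $(-1)^{n-d}$ — one sign for each of the $n-d$ letters $\omega_{0}$ — while every other term has strictly fewer $\omega_{1}$'s. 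Decomposing $I_{\infty\to 1}$ through the base point $0$ by path composition, and re-expressing the pieces $I_{\infty\to 0}$, $I_{1\to\infty}$, $\dots$ that arise by further applications of $t\mapsto 1/t$ and of path reversal — operations that never raise the $\omega_{1}$-count — one obtains, after gathering all contributions of weight $n$ and depth $d$ on the left, a relation of the shape
\[
  \bigl(1-(-1)^{n-d}\bigr)\,\zeta(\mathbf{k})=\sum\bigl(\text{products of lower-weight multiple zeta values}\bigr)+\sum\bigl(\text{multiple zeta values of weight }n\text{ and depth at most }d-1\bigr).
\]
When the weight and depth have opposite parity, $n-d$ is odd, the left-hand side is $2\,\zeta(\mathbf{k})$, and dividing by $2$ and applying the induction hypothesis — to the lower-weight factors and to the same-weight lower-depth summands — exhibits $\zeta(\mathbf{k})$ as a $\mathbb{Q}$-linear combination of products of multiple zeta values of depth at most $d-1$. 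If the two parities agree, $n-d$ is even and the identity degenerates to $0=\dots$, consistent with the parity hypothesis being indispensable.

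\textbf{The hard part.} The conceptual skeleton is short; the delicate work — and organizing it into a single \emph{explicit} formula, rather than into the algorithm of the constructive proof, is exactly what this paper does, after M. Hirose's treatment of the multiple zeta value case — lies in two places. First, all the M\"obius substitutions and path compositions must be performed at the level of regularized iterated integrals based at the tangential points $0,1,\infty$, keeping track of the (in general multiple-zeta-value-valued) correction constants that each composition through a puncture produces, and checking that every power of the two regularization parameters ultimately cancels, so that the resulting identity is a genuine relation among convergent multiple zeta values. Second, and most delicately, one must follow \emph{every} term that still carries $d$ copies of $\omega_{1}$ through the recursion and verify that, modulo products of lower weight and modulo multiple zeta values of lower depth, these assemble into precisely $(-1)^{n-d}\,\zeta(\mathbf{k})$ — possibly after also invoking the analogous relation for the reversed index $(k_{d},\dots,k_{1})$ — since it is this exact sign, together with the parity hypothesis, that produces the coefficient $1-(-1)^{n-d}$; any miscounted depth-$d$ contribution would spoil the reduction.
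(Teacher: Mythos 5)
You have correctly identified the mechanism that drives the parity phenomenon — the inversion $t\mapsto 1/t$, which sends $\omega_{0}\mapsto-\omega_{0}$, $\omega_{1}\mapsto\omega_{0}+\omega_{1}$ and therefore reproduces the word $w(\mathbf{k})$ with coefficient $(-1)^{n-d}$ modulo words of lower $\omega_{1}$-count. This is the same input as in Panzer's proof and as in this paper's Theorem \ref{th:main}, where it appears as the comparison of $\Li_{\mathbf{k}}(\boldsymbol{z})$ with $\Li_{\mathbf{k}}(1/\boldsymbol{z})$. But your argument has a genuine gap exactly where the theorem lives: the displayed relation $\bigl(1-(-1)^{n-d}\bigr)\zeta(\mathbf{k})=\cdots$ is asserted, not derived. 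When you decompose $I_{\infty\to 1}(w)$ through the tangential base point at $0$, the term with $w''=\emptyset$ produces the regularized value of $I_{\infty\to 0}(w)$, which after path reversal and a further inversion contributes shuffle-regularized integrals of words that still carry all $d$ letters $\omega_{1}$ (essentially the reversed word $\omega_{0}^{k_{d}-1}\omega_{1}\cdots\omega_{0}^{k_{1}-1}\omega_{1}$, whose regularization is a $\mathbb{Q}$-combination of \emph{convergent depth-$d$} multiple zeta values, not of lower-depth ones). So the right-hand side of your relation is not visibly of depth $\le d-1$: there are additional depth-$d$ terms beyond the single $(-1)^{n-d}\zeta(\mathbf{k})$, and showing that they cancel, or combine with the relation for the reversed index into something of the stated shape, \emph{is} the parity theorem. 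You flag this yourself as "the hard part," together with the cancellation of the two regularization parameters, but flagging it does not discharge it; as written the argument is circular at the decisive step (to control the depth-$d$ leftovers you would need the very depth reduction you are trying to prove, or a genuinely new bookkeeping device).

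For comparison: the paper does not prove this quoted theorem directly (it cites Brown, Ihara--Kaneko--Zagier, Jarossay, Machide and Tsumura); within the paper it is recovered as the specialization $z_{i}\to 1$ of Theorem \ref{th:main2} via Hirose's formula \eqref{eq:Hirose}. The depth-$d$ bookkeeping that your sketch defers is handled there by a different organization: the inversion identity is promoted to the multi-variable statement \eqref{eq:main} with a fully explicit right-hand side $Q_{\mathbf{k}}(\boldsymbol{z})$, which is then proved by induction on the weight using $\partial_{z_{1}}$ (Lemma \ref{le:diff}) and the limit $z_{1}\to 0$ (Lemma \ref{le:Panzer}); differentiation in $z_{1}$ strictly lowers the weight, so no uncontrolled depth-$d$ terms ever need to be reassembled, and the regularization issues are confronted only once, in the limit $z_{i}\to 1$ of Section \ref{sec:Proof2}. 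To turn your outline into a proof you would either need to carry out that assembly explicitly (which is essentially reconstructing Panzer's or Hirose's computation) or switch to an induction that avoids it.
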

There are also several analogues and extensions of the theorem, for example, \cite{K}, \cite{KM}, \cite{KNS}, \cite{O}, \cite{P}, \cite{T2},  \cite{T3}. In particular, Panzer \cite{P} extended the parity theorem to multiple polylogarithms. The multiple polylogarithms are defined by
\begin{equation}\label{eq:Lise}
\Li_{\mathbf{k}}(\boldsymbol{z}) = \Li_{k_{1},\dots,k_{d}}(z_{1},\dots,z_{d}) \coloneqq \sum_{0 < m_{1} < \cdots < m_{d}} \frac{z_{1}^{m_{1}} \cdots z_{d}^{m_{d}}}{m_{1}^{k_{1}} \cdots m_{d}^{k_{d}}}
\end{equation}
for $\mathbf{k} = (k_{1},\dots,k_{d}) \in \mathbb{Z}_{> 0}^{d}$ and $\boldsymbol{z} = (z_{1},\dots,z_{d}) \in \mathbb{C}^{d}$. We consider $\Li_{\emptyset}(\emptyset) = 1$. Let $z_{i,j}$ denote a consecutive product  $z_{i}\cdots z_{j}$. The series on the right-hand side of \eqref{eq:Lise} converges absolutely if $|z_{i,d}|<1$ for all $i \in \{1,\dots,d\}$ and in that region, it is equal to the integral expression
\begin{equation} \label{eq:Liint}
\Li_{\mathbf{k}}(\boldsymbol{z}) = (-1)^{d}\int_{0}^{1}\omega_{1/z_{1,d}}\omega_{0}^{k_{1}-1}\omega_{1/z_{2,d}}\omega_{0}^{k_{2}-1}\cdots \omega_{1/z_{d,d}}\omega_{0}^{k_{d}-1},
\end{equation}
where we consider the integral of the right-hand side as $0$ when there exists $i$ such that $z_{i}=0$. The symbol $\omega_{a}$ represents the differential form $\omega_{a} = dt/(t-a)$, and the above integral represents the iterated integral along the straight path from $0$ to $1$:
\[\int_{0}^{1}\omega_{a_{1}}\omega_{a_{2}}\cdots\omega_{a_{n}} = \int_{0}^{1} \frac{dt_{n}}{t_{n}-a_{n}} \cdots \int_{0}^{t_{3}} \frac{dt_{2}}{t_{2}-a_{2}} \int_{0}^{t_{2}} \frac{dt_{1}}{t_{1}-a_{1}}.\]
For a subset $S$ of $\mathbb{C}$, we define
\[\mathcal{D}^{d}(S) \coloneqq \mathbb{C}^{d} \setminus \bigcup_{1 \le i \le j \le d} \{(z_{1},\dots,z_{d}) \in  \mathbb{C}^{d} \mid z_{i, j} \in S\}\]
and
\[\tilde{\mathcal{D}}^{d}(S) \coloneqq \mathbb{C}^{d} \setminus \bigcup_{1 \le i \le d} \{(z_{1},\dots,z_{d}) \in  \mathbb{C}^{d} \mid z_{i, d} \in S\}.\]
The integral expression \eqref{eq:Liint} gives an analytic continuation to the set $\tilde{\mathcal{D}}^{d}(\mathbb{R}_{\ge 1})$. Moreover, the integral of the right-hand side of \eqref{eq:Liint} converges absolutely in $\tilde{\mathcal{D}}^{d}(\mathbb{R}_{> 1})$ except when $(k_{d}, z_{d}) = (1,1)$, and continuous there. When $(k_{d}, z_{d}) = (1,1)$, the right-hand side of \eqref{eq:Liint} diverges, but we can give it a meaning through regularization, which will be explained in Section \ref{se:reg}. Let $\Li^{*}_{\mathbf{k}}(\boldsymbol{z})$ denote the stuffle regularized values, and $\Li^{\shuffle}_{\mathbf{k}}(\boldsymbol{z})$ denote the shuffle regularized values. Note that in many papers, $\Li^{*}_{\mathbf{k}}(\boldsymbol{z})$ and $\Li^{\shuffle}_{\mathbf{k}}(\boldsymbol{z})$ are used with different meanings from ours. For convenience, $\Li_{\mathbf{k}}(\boldsymbol{z})$ is sometimes denoted as $\Li^{\emptyset}_{\mathbf{k}}(\boldsymbol{z})$, but typically the $\emptyset$ is omitted.

To state Pnazer's theorem, we introduce some notation, which is also used in the main theorems. For $\boldsymbol{z} = (z_{1},\dots,z_{d}) \in \mathbb{C}^{d}$, we write
\[1/\boldsymbol{z} \coloneqq \left(\frac{1}{z_{1}},\dots, \frac{1}{z_{d}}\right).\]
Additionally, we define 
\[\mathcal{B}_{l}(z) \coloneqq \frac{(2\pi i)^{l}}{l!}B_{l}\left(\frac{1}{2} + \frac{\log(-z)}{2\pi i}\right),\]
where $i$ denotes the imaginary unit, $B_{l}(x)$ denotes the $l$-th Bernoulli polynomial, and $\log(z)$ denotes the principal branch of the logarithm, with a branch cut along $(-\infty, 0]$.

Panzer's theorem can then be stated as follows.
\begin{theorem}[Parity theorem for multiple polylogarithms (Panzer \cite{P})]
For $\mathbf{k} \in \mathbb{Z}_{>0}^{d}$ and $\boldsymbol{z} \in \mathcal{D}^{d}(\mathbb{R}_{\ge 0})$,
\[\Li_{\mathbf{k}}(\boldsymbol{z}) - (-1)^{d+|\mathbf{k}|}\Li^{}_{\mathbf{k}}(1/\boldsymbol{z})\]
can be expressed as a sum of the form
\[\prod_{i=1}^{d}\mathcal{B}_{l_{i}}(z_{i,d}) \prod_{j=1}^{s} \Li_{\mathbf{k}^{(j)}}(\boldsymbol{z}^{(j)})\]
with integer coefficients, where $s, l_{1},\dots,l_{d} \in \mathbb{Z}_{\ge 0}$ and $\mathbf{k}^{(j)} \in \mathbb{Z}_{> 0}^{d_{j}}$ satisfy
\[l_{1}+\cdots+l_{d}+|\mathbf{k}^{(1)}|+\cdots+|\mathbf{k}^{(s)}|=|\mathbf{k}| \mathrm{\quad and \quad} d_{1}+\cdots+d_{s} < d,\]
and $\boldsymbol{z}^{(j)} \in \mathbb{C}^{d_{j}}$ are allowed only if they can be written as
\[(\boldsymbol{z}^{(1)},\dots,\boldsymbol{z}^{(s)}) = (z_{a_{1},b_{1}}, z_{a_{2},b_{2}}, \dots, z_{a_{h}, b_{h}})\]
with $1 \le a_{1} \le b_{1} < a_{2} \le b_{2} < \cdots < a_{h} \le b_{h} \le d$ and $h=d_{1}+\cdots+d_{s}$.
\end{theorem}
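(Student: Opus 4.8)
\emph{Overview.} I would prove the theorem by induction on the depth $d$, working throughout with the analytic continuation furnished by the iterated-integral representation \eqref{eq:Liint}; since on $\mathcal{D}^{d}(\mathbb{R}_{\ge 0})$ every function in sight is holomorphic, it suffices to establish the identity after differentiation and then fix the remaining constant at a boundary point where the depth collapses. Write $F_{\mathbf{k}}(\boldsymbol{z}):=\Li_{\mathbf{k}}(\boldsymbol{z})-(-1)^{d+|\mathbf{k}|}\Li_{\mathbf{k}}(1/\boldsymbol{z})$. For $d=1$ one has $d+|\mathbf{k}|=1+k_{1}$, so $F_{\mathbf{k}}(z)=\Li_{k_{1}}(z)+(-1)^{k_{1}}\Li_{k_{1}}(1/z)=-\mathcal{B}_{k_{1}}(z)$ by the classical inversion formula for the polylogarithm; this is exactly the $s=0$, $l_{1}=k_{1}$ instance of the asserted shape and serves as the base of the induction.

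\emph{Weight reduction at fixed depth.} Differentiating the series \eqref{eq:Lise} in the last variable and applying $z\mapsto 1/z$ to the inverted term yields, for $k_{d}\ge 2$,
\[ z_{d}\,\partial_{z_{d}}F_{(k_{1},\dots,k_{d})}(\boldsymbol{z})=F_{(k_{1},\dots,k_{d-1},k_{d}-1)}(\boldsymbol{z}), \]
with the symmetric statement for $z_{1,d}\,\partial_{z_{1,d}}$ acting on $k_{1}$. Hence an inner induction on the weight $|\mathbf{k}|$ brings everything down to the case $\mathbf{k}=(1,\dots,1)$, provided one can integrate admissible expressions back along $z_{d}$ (resp. $z_{1,d}$) and remain inside the admissible class: here $z\,\partial_{z}\mathcal{B}_{l}(z)=\mathcal{B}_{l-1}(z)$ takes care of the Bernoulli factors, while \eqref{eq:Liint} and the stuffle-type identity $\Li_{\dots,0}(\boldsymbol{z})=\frac{z_{d}}{1-z_{d}}\Li_{k_{1},\dots,k_{d-1}}(z_{1},\dots,z_{d-2},z_{d-1}z_{d})$ (which only ever merges adjacent variables into the consecutive product $z_{d-1,d}$) keep the polylogarithm arguments in the block family demanded by the theorem.

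\emph{Depth reduction via the reflection.} The genuine drop in depth comes from the reflection $\boldsymbol{z}\mapsto 1/\boldsymbol{z}$ itself. Substituting $t\mapsto 1/t$ in \eqref{eq:Liint} and using, for each singular point $\sigma=1/z_{i,d}$, the partial-fraction identity $dt/(t-\sigma)=-ds/s+ds/(s-1/\sigma)$ with $t=1/s$, one rewrites $\Li_{\mathbf{k}}(\boldsymbol{z})$ as a sum of iterated integrals along the path from $\infty$ to $1$ whose letters are the $z_{i,d}$ and $0$. The term retaining all the letters $z_{i,d}$ reconstitutes $\pm\Li_{\mathbf{k}}(1/\boldsymbol{z})$ up to the segment-$[1,\infty]$ contributions, and every other term has strictly fewer nonzero letters, hence strictly smaller depth; splitting the path through $0$ with tangential regularization at $\infty$ separates off the purely $\{0,z_{i,d}\}$-valued integrals over $[1,\infty]$, which are controlled by the one-variable inversion formula and organise, through the Bernoulli generating function $t\,e^{xt}/(e^{t}-1)$, into the factors $\mathcal{B}_{l}(z_{i,d})$. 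Checking that the surviving lower-depth integrals have arguments of the form $(z_{a_{1},b_{1}},\dots,z_{a_{h},b_{h}})$ with $a_{1}\le b_{1}<a_{2}\le\cdots<a_{h}\le b_{h}$ and feeding them into the depth induction hypothesis finishes the proof.

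\emph{Main obstacle.} The crux is the combinatorial and branch-cut bookkeeping of the last step: one must show that the regularized integrals over $[1,\infty]$ collapse to precisely $\mathcal{B}_{l}(z_{i,d})=\frac{(2\pi i)^{l}}{l!}B_{l}\!\left(\frac{1}{2}+\frac{\log(-z_{i,d})}{2\pi i}\right)$, not merely to polynomials in $\log(-z_{i,d})$, which forces careful control of the principal branch; and one must verify that, after iterating the partial-fraction expansion together with the weight reductions, no index $0$ ever survives inside a $\Li$, the weight is conserved, the total depth strictly decreases, and the argument tuples never leave the admissible block family. Everything else is routine calculus of iterated integrals and straightforward induction.
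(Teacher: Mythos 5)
Your overall architecture --- an outer induction on depth, an inner induction on weight driven by $z_i\partial_{z_i}$, and a final path reversal $t\mapsto 1/t$ at the all-ones index --- has a genuine gap at the step you treat as routine: determining the functions of integration in the weight induction. After integrating $\partial_{z_d}F_{\mathbf{k}}=F_{(k_1,\dots,k_{d}-1)}/z_d$ you must evaluate $F_{\mathbf{k}}$ somewhere to fix the $z_d$-independent part, and there is no ``boundary point where the depth collapses'': as the relevant variable tends to $0$ the term $\Li_{\mathbf{k}}(\boldsymbol{z})$ vanishes, but $\Li_{\mathbf{k}}(1/\boldsymbol{z})$ diverges logarithmically (already $\Li_{1}(1/z)=-\log(1-1/z)$ does), so $F_{\mathbf{k}}$ has no limit there. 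The precise divergent asymptotics of $\Li_{\mathbf{k}}(1/\boldsymbol{z})$ as $z_{1}\to 0$ --- Panzer's Lemma 2.3, quoted here as Lemma \ref{le:Panzer} --- is exactly what supplies both the integration constant and the Bernoulli factors $\mathcal{B}_{l}(z_{i,d})$. Your proposal relocates the source of the $\mathcal{B}_{l}$'s to the reflection step at $\mathbf{k}=(1,\dots,1)$, but the weight induction already needs them before you ever get there. Two further problems compound this: closure of the admissible class under integration is asserted rather than proved (several factors of an admissible product depend on $z_d$ through the consecutive products $z_{i,d}$, so the product rule generates cross terms --- precisely the bookkeeping that forces the explicit right-hand side of \eqref{eq:main} and the cancellation computations in Lemma \ref{le:diff}); and the identity $\Li_{k_1,\dots,k_{d-1},0}(\boldsymbol{z})=\frac{z_d}{1-z_d}\Li_{k_1,\dots,k_{d-1}}(z_1,\dots,z_{d-2},z_{d-1}z_d)$ introduces rational prefactors lying outside the admissible class, with no argument that they cancel.

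For comparison, the paper does not prove Panzer's theorem by a separate depth induction at all: it proves the explicit identity \eqref{eq:main} by a single induction on the weight, differentiating in $z_{1}$ (Lemma \ref{le:diff}) and fixing the function of integration by the limit $z_{1}\to 0$ via Lemma \ref{le:Panzer}; Panzer's qualitative statement then follows by iterating \eqref{eq:main} on the factors $\Li^{b}_{\mathbf{k}_{n+1,d}}(1/\boldsymbol{z}_{n+1,d})$ to strip off the reciprocal arguments. Your reflection idea ($t\mapsto 1/t$ in \eqref{eq:Liint}, partial fractions, path composition through $\infty$ with tangential regularization) is a legitimate germ of an alternative argument, but you have correctly identified it as the crux and left it entirely unexecuted; in its present form the proposal does not constitute a proof.
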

It should be noted that Panzer \cite{P} furthermore discusses the treatment of $z_{i}\to1$ using shuffle regularization. Panzer's proof is simple and constructive, but it does not provide an explicit formula for general depth.

Recently, Hirose \cite{H} provided a simple explicit formula for the parity theorem for multiple zeta values via a theory of multitangent functions by Bouillot \cite{B}. To state Hirose's theorem, we introduce some notation. Since the main theorems use more general notation, we introduce inclusive notation to avoid redundancy.
\begin{definition} For $\bullet \in \{\emptyset, *, \shuffle\}$, $a \in \mathbb{Z}_{\ge 0}$, $\mathbf{k}=(k_{1}, \dots, k_{d}) \in \mathbb{Z}_{>0}^{d}$ and $\boldsymbol{z}=(z_{1},\dots,z_{d}) \in \mathcal{D}^{d}(\mathbb{R}_{> 1})$, we define
\begin{align*}
\Li^{\star, \bullet}_{\mathbf{k}}(\boldsymbol{z}) &\coloneqq \sum_{(\mathbf{k}';\boldsymbol{z}') \preceq (\mathbf{k};\boldsymbol{z})} \Li^{\bullet}_{\mathbf{k}'}(\boldsymbol{z}'), \\
\Li_{\mathbf{k}}^{a, \bullet}(\boldsymbol{z}) &\coloneqq (-1)^{a}\sum_{\substack{l_{1}+\cdots+l_{d}=a\\ l_{1},\dots,l_{d} \ge 0}} \left(\prod_{i=1}^{d}\binom{k_{i}+l_{i}-1}{l_{i}}\right)\Li^{\bullet}_{k_{1}+l_{1},\dots,k_{d}+l_{d}}(z_{1},\dots,z_{d}),\\
\tilde{\Li}_{\mathbf{k}}^{a, \bullet}(\boldsymbol{z})
&\coloneqq (-1)^{a}\sum_{\substack{l_{1}+\cdots+l_{d}=a\\ l_{1},\dots,l_{d} \ge 0}} \left(\prod_{i=1}^{d}\binom{k_{i}+l_{i}-1}{l_{i}}\right)\\
&\qquad \times \sum_{\substack{(\boldsymbol{z}^{(1)}, \dots, \boldsymbol{z}^{(s)}) = (z_{1}, \dots, z_{d})\\ (\mathbf{k}^{(1)}, \dots, \mathbf{k}^{(s)}) = (k_{1}+l_{1}, \dots, k_{d}+l_{d})}}(-1)^{d+s}\Li^{\star, \bullet}_{\mathbf{k}^{(1)}}(\boldsymbol{z}^{(1)}) \cdots \Li^{\star, \bullet}_{\mathbf{k}^{(s)}}(\boldsymbol{z}^{(s)}).
\end{align*}
Here, $(\mathbf{k}';\boldsymbol{z}') \preceq (\mathbf{k};\boldsymbol{z})$ indicates that the pair of $\mathbf{k}'$ and $\boldsymbol{z}'$ can be written as
\[\mathbf{k}' = (k_{1}\boxempty_{1} \cdots \boxempty_{d-1}k_{d})\quad \text{and} \quad \boldsymbol{z}' = (z_{1}\boxempty'_{1} \cdots \boxempty'_{d-1} z_{d}),\]
where each pair $(\boxempty_{i}, \boxempty'_{i})$ is either $(\text{``}, \text{''}, \text{``}, \text{''})$ or $(\text{``} + \text{''}, \text{``} \times \text{''})$. The inner sum on the right-hand side of the third equation is over all partitions of $(k_{1}+l_{1}, \dots, k_{d}+l_{d})$ and $(z_{1}, \dots, z_{d})$ into an arbitrary number $s$ of parts, denoted by $\mathbf{k}^{(1)}, \dots, \mathbf{k}^{(s)}$ and $\boldsymbol{z}^{(1)}, \dots, \boldsymbol{z}^{(s)}$, such that $\textrm{dep}(\mathbf{k}^{(i)}) = \textrm{dep}(\boldsymbol{z}^{(i)})>0$ for all $i \in \{1, \dots, s\}$. We consider $\Li^{\star, \bullet}_{\emptyset}(\emptyset)$ and $\Li^{a, \bullet}_{\emptyset}(\emptyset)$ $(a=0)$ as $1$, and $\Li^{a, \bullet}_{\emptyset}(\emptyset)$ $(a>0)$ as $0$. In addition, we condider $\tilde{\Li}_{\mathbf{k}}^{a, \bullet}(\boldsymbol{z})$ $(a<0)$ as $0$.
\end{definition}
Let $\{k\}^{n}$ denote $n$ repetitions of $k$. We use the same notation for multiple zeta values $\zeta(\mathbf{k}) = \Li_{\mathbf{k}}(\{1\}^{\mathrm{dep}(\mathbf{k})})$ as we do for multiple polylogarithms. In other words, $\zeta^{\bullet,\bullet}(\mathbf{k}) \coloneqq \Li^{\bullet,\bullet}_{\mathbf{k}}(\{1\}^{\mathrm{dep}(\mathbf{k})})$. For $\mathbf{k} = (k_{1},\dots,k_{d})$, we define 
\begin{align*}
\overleftarrow{\mathbf{k}} \coloneqq (k_{d}, \dots, k_{1}) \quad \text{and} \quad \mathbf{k}_{i,j} \coloneqq (k_{i}, \dots, k_{j})
\end{align*}
for $1 \le i \le j \le d$. We consider $\mathbf{k}_{i,j}=\emptyset$ if $i>j$. Similarly, we write $\boldsymbol{z}_{i,j} \coloneqq (z_{i},\dots,z_{j})$.

Then, the formula proved by Hirose is as follows.
\begin{theorem}[Hirose {\cite[Theorem 7]{H}}] \label{th:Hirose} For $\mathbf{k} = (k_{1},\dots,k_{d}) \in \mathbb{Z}^{d}_{> 0}$, we have
\begin{align}\label{eq:Hirose}
&(-1)^{d} \zeta^{\star, *}(\mathbf{k}) - (-1)^{|\mathbf{k}|}\zeta^{*}(\mathbf{k})\\ \nonumber
&=-\sum_{m=0}^{d-1}(-1)^{m}\zeta^{\star, *}(\mathbf{k}_{1,m})\, \delta^{\mathbf{k}_{m+1,d}}\\ \nonumber
&\quad + \sum_{0 \le m < n \le d} \sum_{\substack{a+b+2l=k_{n} \\ a,b, l \ge 0}}  (-1)^{m+|\mathbf{k}_{n+1,d}|+b+l} \frac{(2\pi)^{2l}}{(2l)!}B_{2l}\\ \nonumber
&\qquad \times \zeta^{\star, *}(\mathbf{k}_{1,m})\, \zeta^{a, *}(\overleftarrow{\mathbf{k}_{m+1,n-1}})\, \zeta^{b, *}(\mathbf{k}_{n+1,d}),
\end{align}
where $B_{2l}$ denotes the $2l$-th Bernoulli number, and
\[\delta^{k_{1},\dots,k_{d}} \coloneqq
\begin{cases}
\frac{(-1)^{d/2}\pi^{d}}{d!} & (k_{1},\dots,k_{d}) = (\{1\}^{d}) \mathrm{\ and\ } d \in 2\mathbb{Z},\\
0 &\mathrm{otherwise.}
\end{cases}\]
\end{theorem}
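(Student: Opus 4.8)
The plan is to reprove Hirose's identity the way he does, via Bouillot's theory of multitangent functions \cite{B}: one computes a single suitable invariant of a multitangent function in two different ways and equates the results, the same mechanism Bouillot used to derive relations among multiple zeta values. Fix $\mathbf{k} = (k_{1},\dots,k_{d}) \in \mathbb{Z}_{> 0}^{d}$; assuming for a moment $k_{1}, k_{d} \ge 2$, work for $z \in (0,1)$ with the star multitangent function
\[\mathcal{T}^{\star}_{\mathbf{k}}(z) \coloneqq \sum_{\substack{n_{1} \le \cdots \le n_{d} \\ n_{1},\dots,n_{d} \in \mathbb{Z}}} \prod_{i=1}^{d} \frac{1}{(n_{i}+z)^{k_{i}}},\]
which is the sum of the ordinary (strict) multitangent functions $\mathcal{T}_{\mathbf{k}'}(z)$ over the indices $\mathbf{k}'$ obtained from $\mathbf{k}$ by replacing some of the commas with ``$+$'' --- the same operation behind $\zeta^{\star,*} = \sum \zeta^{*}$; the cases $k_{1} = 1$ or $k_{d} = 1$ are recovered by stuffle regularization, which is exactly why the regularized symbols $\zeta^{*}$, $\zeta^{\star,*}$, $\zeta^{a,*}$ rather than bare multiple zeta values occur in \eqref{eq:Hirose}. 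Two inputs are used. First, the substitution $n_{i} \mapsto -n_{d+1-i}$ gives the reversal relation $\mathcal{T}^{\star}_{\mathbf{k}}(z) = (-1)^{|\mathbf{k}|}\mathcal{T}^{\star}_{\overleftarrow{\mathbf{k}}}(-z)$. Second, Bouillot's reduction theorem writes $\mathcal{T}^{\star}_{\mathbf{k}}(z)$ as an explicit finite $\mathbb{Q}$-linear combination of the monotangent functions $\mathcal{M}_{\ell}(z) = \sum_{n \in \mathbb{Z}}(n+z)^{-\ell} = \tfrac{(-1)^{\ell-1}}{(\ell-1)!}(\pi\cot\pi z)^{(\ell-1)}$ $(\ell \ge 1)$, the coefficient of $\mathcal{M}_{\ell}$ at a ``cutting position'' $n$ being a product of regularized multiple zeta (star) values of the prefix $\mathbf{k}_{1,n-1}$ and the suffix $\mathbf{k}_{n+1,d}$, the leftover weight $k_{n} - \ell$ being distributed between the two sides with binomial weights $\binom{k_{i}+l_{i}-1}{l_{i}}$ exactly as in the shifted values $\zeta^{a,*}$ of \eqref{eq:Hirose}.

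The core step is to compute the constant term of $\mathcal{T}^{\star}_{\mathbf{k}}(z)$ in its Laurent expansion at $z = 0$ in these two ways. On the \emph{arithmetic} side, split the bilateral sum over $n_{1} \le \cdots \le n_{d}$ by recording which of the $n_{i}$ are positive, zero, or negative: a maximal block of positive indices contributes a (regularized) star value $\zeta^{\star,*}$ of a block of $\mathbf{k}$; a maximal block of negative indices contributes, after $n \mapsto -n$, the reversed value with a sign $(-1)^{\mathrm{weight}}$; and a block of indices equal to $0$ sitting on positions $[m+1,n-1]$ produces a pure power $z^{-(k_{m+1}+\cdots+k_{n-1})}$, so that, to contribute to the constant term, the two flanking blocks must absorb the missing powers of $z$, which the binomial expansion of $(n_{i}+z)^{-k_{i}}$ turns into the shifted values $\zeta^{a,*}$, $\zeta^{b,*}$ with the left flank reversed as $\overleftarrow{\mathbf{k}_{m+1,n-1}}$. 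Resumming over all sign patterns produces the depth-$d$ combination $(-1)^{d}\zeta^{\star,*}(\mathbf{k}) - (-1)^{|\mathbf{k}|}\zeta^{*}(\mathbf{k})$ (from the two extreme ranges, with a refinement and the reversal relation intervening), the triple products $\zeta^{\star,*}(\mathbf{k}_{1,m})\,\zeta^{a,*}(\overleftarrow{\mathbf{k}_{m+1,n-1}})\,\zeta^{b,*}(\mathbf{k}_{n+1,d})$ with $a + b = k_{n}$ (the $l = 0$ part of \eqref{eq:Hirose}), and, in the all-ones cases, extra pure powers of $\pi$ coming from powers of $\pi\cot\pi z$ --- namely the $\delta^{\mathbf{k}_{m+1,d}}$ terms.

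On the \emph{analytic} side, insert Bouillot's reduction and read off the constant term of each $\mathcal{M}_{\ell}(z)$ from $\pi\cot\pi z = \tfrac{1}{z} - 2\sum_{r \ge 1}\zeta(2r)z^{2r-1}$: only the even-order monotangents survive, with constant term $2\zeta(2l) = (-1)^{l+1}\tfrac{(2\pi)^{2l}}{(2l)!}B_{2l}$, which is the source of the Bernoulli factor, and the inequality $2l \le k_{n}$ forced by Bouillot's shift becomes the condition $a + b + 2l = k_{n}$. Equating the arithmetic and the analytic expansion of the constant term, and transposing $(-1)^{|\mathbf{k}|}\zeta^{*}(\mathbf{k})$ to the left-hand side, yields \eqref{eq:Hirose} once all the signs are gathered into $(-1)^{m+|\mathbf{k}_{n+1,d}|+b+l}$.

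The main obstacle is purely combinatorial bookkeeping: one must check that Bouillot's reduction coefficients, after the constant terms of the monotangents are taken and the prefix and suffix data are resummed, assemble into precisely the product pattern $\zeta^{\star,*}(\mathbf{k}_{1,m})\,\zeta^{a,*}(\overleftarrow{\mathbf{k}_{m+1,n-1}})\,\zeta^{b,*}(\mathbf{k}_{n+1,d})$ with the stated sign; that the leftmost prefix $\zeta^{\star,*}(\mathbf{k}_{1,m})$, which the arithmetic splitting does not by itself exhibit, together with the $\delta$-terms, are correctly matched; and that the stuffle regularization is propagated uniformly, so that the divergences at $k_{1} = 1$ or $k_{d} = 1$ and the constant term of $\pi\cot\pi z$ are handled consistently and the symbols $\zeta^{*}$, $\zeta^{\star,*}$, $\zeta^{a,*}$ emerge with the correct regularization constants.
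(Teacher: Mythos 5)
Your proposal reconstructs Hirose's \emph{original} argument via Bouillot's multitangent functions, whereas this paper does not prove Theorem \ref{th:Hirose} that way at all: the statement is quoted from \cite{H}, and the paper's own justification is that it is the specialization $\boldsymbol{z}=(\{1\}^{d})$, $\bullet=*$ of Theorem \ref{th:main2}, combined with the antipode identity to rewrite $\tilde{\Li}^{a,*}_{\mathbf{k}_{m+1,n-1}}(\{1\}^{n-m-1})$ as $\zeta^{a,*}(\overleftarrow{\mathbf{k}_{m+1,n-1}})$, the evaluation $\mathcal{B}_{l}(1)=\frac{(2\pi i)^{l}}{l!}B_{l}\left(\tfrac{1}{2}\pm\tfrac{1}{2}\right)$ (which eliminates odd $l\ge 3$ and turns the $a+b+l=k_{n}$ sum into the $a+b+2l=k_{n}$ sum with the factor $(-1)^{l}\frac{(2\pi)^{2l}}{(2l)!}B_{2l}$), and $\delta_{\{1\}^{d}}(\{1\}^{d})=\frac{(\pm\pi i)^{d}}{d!}=\delta^{\{1\}^{d}}$. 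The two routes buy different things: yours stays entirely within multiple zeta values and one real variable, and makes the Bernoulli factor transparent as the constant term $2\zeta(2l)$ of the even monotangents; the paper's route is globally longer but yields the far more general Theorems \ref{th:main} and \ref{th:main2} (arbitrary arguments $z_{i}$, both regularizations), of which \eqref{eq:Hirose} becomes a short corollary. That said, as a standalone proof your write-up is an outline: everything that pins down the exact shape of \eqref{eq:Hirose} --- the explicit form of Bouillot's reduction coefficients, the emergence of the extra sum over $m$ with the prefix $\zeta^{\star,*}(\mathbf{k}_{1,m})$ (which, as you concede, neither the sign-pattern splitting of the bilateral sum nor the monotangent reduction exhibits on its own), the matching of the $\delta$-terms, and the uniform propagation of stuffle regularization when $k_{1}=1$ or $k_{d}=1$ --- is deferred to ``combinatorial bookkeeping.'' Those steps are the actual content of \cite{H}; without them the argument is a pointer to a proof rather than a proof.
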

Equation \eqref{eq:Hirose} is an explicit formula for the parity theorem for multiple zeta values because the left-hand side of \eqref{eq:Hirose} can be written as 
\[(-1)^{d}2\zeta^{*}(k_{1},\dots,k_{d}) + (-1)^{d}\sum_{\substack{\boxempty_{i} = \text{``}, \text{''} \text{\ or\ } \text{``} + \text{''} \\ \text{for\ } i = 1, \dots, d-1 \\ \text{not\ all\ } \boxempty_{i} = \text{``}, \text{''}}} \zeta^{*}_{}(k_{1}\boxempty_{1} \cdots \boxempty_{d-1}k_{d})\]
when $k_{1}+\dots+k_{d} \not\equiv d \pmod 2$.

The present paper aims to extend Hirose's formula to multiple polylogarithms and to provide an explicit formula for Panzer's theorem. More precisely, we prove Theorem \ref{th:main} and its extension, Theorem \ref{th:main2}, which are the main theorems of this paper.

\begin{theorem} \label{th:main}
For $\mathbf{k} = (k_{1},\dots, k_{d}) \in \mathbb{Z}_{>0}^{d}$ and $\boldsymbol{z} = (z_{1},\dots, z_{d}) \in \mathcal{D}^{d}(\mathbb{R}_{\ge 0})$,
we have
\begin{align}\label{eq:main}
& (-1)^{d}\Li^{\star}_{\mathbf{k}}(\boldsymbol{z}) - (-1)^{|\mathbf{k}|}\Li_{\mathbf{k}}(1/\boldsymbol{z}) \\ \nonumber
&= \sum_{0 \le m < n \le d} \sum_{\substack{a+b+ l=k_{n} \\ a,b, l \ge 0}}  (-1)^{m+|\mathbf{k}_{n+1,d}|+b} \mathcal{B}_{l}(z_{m+1,d})\\ \nonumber
&\qquad \times \Li^{\star}_{\mathbf{k}_{1,m}}(\boldsymbol{z}_{1,m})\, \tilde{\Li}^{a}_{\mathbf{k}_{m+1,n-1}}(\boldsymbol{z}_{m+1,n-1})\Li^{b}_{\mathbf{k}_{n+1,d}}(1/\boldsymbol{z}_{n+1,d}).
\end{align}
\end{theorem}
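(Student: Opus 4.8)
\emph{Proof proposal.} The plan is to adapt to the polylogarithmic setting the strategy behind Hirose's proof, namely Bouillot's ``reduction of multitangent functions into monotangent functions'': the role of the multitangent function will be played by a bilateral multiple polylogarithm, and that of the monotangent functions by the $\mathcal{B}_{l}$'s, which are nothing but the classical one-variable inversion kernels $\Li_{l}(z)+(-1)^{l}\Li_{l}(1/z)=-\mathcal{B}_{l}(z)$ (Jonqui\`{e}re).

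First I would move everything to the iterated-integral side. By \eqref{eq:Liint}, $\Li^{\star}_{\mathbf{k}}(\boldsymbol{z})=\sum_{(\mathbf{k}';\boldsymbol{z}')\preceq(\mathbf{k};\boldsymbol{z})}\Li_{\mathbf{k}'}(\boldsymbol{z}')$ is a sum of integrals $\int_{0}^{1}\omega_{1/z_{1,d}}\omega_{0}^{k_{1}-1}\cdots$ over the segment $[0,1]$, while $\Li_{\mathbf{k}}(1/\boldsymbol{z})$ is the analogous integral with the hyperplanes moved to $z_{i,d}$; the exceptional case $(k_{d},z_{d})=(1,1)$ is absorbed by the shuffle regularization of Section~\ref{se:reg}, and the hypothesis $\boldsymbol{z}\in\mathcal{D}^{d}(\mathbb{R}_{\ge 0})$ keeps all the straight paths away from singularities. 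Applying the substitution $t\mapsto 1/t$ — which sends $\omega_{a}\mapsto\omega_{1/a}-\omega_{0}$ and the path $0\to 1$ to $\infty\to 1$ — rewrites the $1/\boldsymbol{z}$-integral as a combination of integrals over $[1,\infty]$, so that $(-1)^{d}\Li^{\star}_{\mathbf{k}}(\boldsymbol{z})-(-1)^{|\mathbf{k}|}\Li_{\mathbf{k}}(1/\boldsymbol{z})$ becomes a (regularized) family of iterated integrals of the forms $\omega_{0},\omega_{z_{1,d}},\dots,\omega_{z_{d,d}}$ over the whole half-line $[0,\infty]$. This half-line integral is exactly the integral representation of a bilateral multiple polylogarithm: $\sum z_{1}^{m_{1}}\cdots z_{d}^{m_{d}}/(m_{1}^{k_{1}}\cdots m_{d}^{k_{d}})$ with $m_{1}\le\cdots\le m_{d}$ ranging over $\mathbb{Z}\setminus\{0\}$ (the ties being coarsened as for the star values).

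The core step is the depth reduction of this bilateral object. On the integral side it amounts to decomposing the path $[0,\infty]$ through the points $z_{i,d}$ and through $\infty$ and iterating path composition; on the series side it is the iteration of the elementary partial-fraction identity expressing $1/(X^{a}Y^{b})$ as a $\mathbb{Z}$-linear combination of $1/\bigl(X^{i}(Y-X)^{j}\bigr)$ and $1/\bigl(Y^{i}(Y-X)^{j}\bigr)$ with binomial coefficients, used to ``collide'' adjacent summation indices one at a time. Each collision peels off either a genuine one-sided $\Li$ factor, or — when a summation index is forced to straddle $0$ — a bilateral one-variable sum $\sum_{m\in\mathbb{Z}\setminus\{0\}}z^{m}/m^{l}=-\mathcal{B}_{l}(z)$. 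Carrying this out and regrouping should produce exactly the four-factor shape $\mathcal{B}_{l}(z_{m+1,d})\,\Li^{\star}_{\mathbf{k}_{1,m}}(\boldsymbol{z}_{1,m})\,\tilde{\Li}^{a}_{\mathbf{k}_{m+1,n-1}}(\boldsymbol{z}_{m+1,n-1})\,\Li^{b}_{\mathbf{k}_{n+1,d}}(1/\boldsymbol{z}_{n+1,d})$: the binomial weights in $\Li^{a}$ and $\Li^{b}$ are the partial-fraction binomials, the partition sum with sign $(-1)^{d+s}$ hidden in $\tilde{\Li}^{a}$ is the antipode-type identity that appears when one reverses the orientation of the first block on passing from $m_{i}<0$ to $-m_{i}>0$, the relation $a+b+l=k_{n}$ records how the weight of the straddling index $n$ is split between the middle block, the right block, and the kernel $\mathcal{B}_{l}$, and the sign $(-1)^{m+|\mathbf{k}_{n+1,d}|+b}$ is what remains of $(-1)^{|\mathbf{k}|}$ after the substitution and the reversals.

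I expect the main obstacle to be exactly this combinatorial matching: showing, plausibly by induction on $d$, that the iterated partial-fraction (equivalently, path-composition) expansion reorganizes into the stated sum over $0\le m<n\le d$ and $a+b+l=k_{n}$ with precisely the coefficients displayed in \eqref{eq:main}, including the degenerate cases of an empty left, middle, or right block and the value-$1$ conventions for $\Li^{\star}_{\emptyset}$, $\Li^{a}_{\emptyset}$ and $\tilde{\Li}^{a}_{\emptyset}$. A second, genuine difficulty is analytic: the bilateral series does not converge absolutely for generic $\boldsymbol{z}$, so the colliding-indices manipulation must be carried out at the level of shuffle-regularized iterated integrals (or via an Abel limit in the $z_{i}$), and at each step one must check that the integrals remain in $\tilde{\mathcal{D}}^{d}(\mathbb{R}_{>1})$ and respect the branch cut built into $\mathcal{B}_{l}$ through $\log(-z)$. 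The base case $d=1$ is the polylogarithm inversion formula, which pins down all the normalizations.
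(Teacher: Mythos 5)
Your proposal takes a genuinely different route from the paper, which follows Panzer: there one writes $P_{\mathbf{k}}(\boldsymbol{z})$ and $Q_{\mathbf{k}}(\boldsymbol{z})$ for the two sides of \eqref{eq:main}, shows by direct computation (Lemma \ref{le:diff}) that $\partial_{z_{1}}P_{\mathbf{k}}$ and $\partial_{z_{1}}Q_{\mathbf{k}}$ obey the identical first-order recursion in terms of lower-weight instances of the same identity, and then fixes the constant of integration by Panzer's limit lemma as $z_{1}\to 0$; induction on $|\mathbf{k}|$ closes the argument. Your Bouillot--Hirose strategy is a reasonable alternative in spirit, and the preliminary reductions you describe (the substitution $t\mapsto 1/t$ sending $\omega_{a}$ to $\omega_{1/a}-\omega_{0}$, the identification of the bilateral depth-one sum with $-\mathcal{B}_{l}$) are correct. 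But as written it is not a proof, because the one step that actually produces the right-hand side of \eqref{eq:main} is asserted rather than carried out: ``carrying this out and regrouping should produce exactly the four-factor shape'' is precisely the content of the theorem. In Hirose's setting this step is Bouillot's reduction of multitangent functions into monotangent functions, a substantial theorem in its own right, and no twisted ($z_{i}\neq 1$) analogue of it exists in the literature for you to cite; you would have to state and prove that reduction, including the exact binomial coefficients, the antipode-type partition sum inside $\tilde{\Li}^{a}$, and the signs, which is where all the difficulty lies.

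There is also an unresolved foundational issue that is more serious than ``non-absolute convergence'': for generic $\boldsymbol{z}$ the bilateral series $\sum_{m\in\mathbb{Z}\setminus\{0\}}z^{m}/m^{l}$ simply diverges, since one of the two half-sums grows geometrically unless $|z|=1$. The bilateral multiple polylogarithm can therefore only be defined as the sum of the analytic continuations of its one-sided pieces, and the partial-fraction ``collision of indices'' cannot be performed termwise on a convergent series. Working instead on the integral side over $[0,\infty]$ forces you to handle the tangential regularization at $\infty$ and to justify the path decomposition, none of which is set up in the proposal. So the gap is twofold: the central combinatorial reduction is missing, and the object it is supposed to act on is not yet well defined. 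By contrast, the paper's differentiation argument needs only the weight-one inversion formula as base case together with elementary derivative formulas for $\Li^{\star}$, $\tilde{\Li}^{a}$ and $\mathcal{B}_{l}$.
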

Note that this theorem differs slightly from Panzer's in that it involves multiple polylogarithms with arguments that are the reciprocals of $z_{i}$. However, we can prove Panzer's theorem by repeatedly applying equation \eqref{eq:main} to $\Li^{b}_{\mathbf{k}_{n+1,d}}(1/\boldsymbol{z}_{n+1,d})$.

It is known from Zhao \cite{Z} that $\Li_{\mathbf{k}}(\boldsymbol{z})$ can be analytically continued as a multi-valued holomorphic function on $\mathcal{D}^{d}(\{1\})$. Therefore, by taking the appropriate branch, equation \eqref{eq:main} holds in $\mathcal{D}^{d}(\{1\})$. However, since we want to consider multiple polylogarithms as single-valued functions, we restrict $\boldsymbol{z}$ to $\mathcal{D}^{d}(\mathbb{R}_{\ge 0})$. We are particularly interested in multiple zeta values and colored multiple zeta values (multiple polylogarithms where all $z_{i}$ are $N$-th roots of unity), but these do not satisfy the assumptions of Theorem \ref{th:main}. We thus solve this issue by computing the limit and present the following theorem.

\begin{theorem}\label{th:main2}
For $\mathbf{k} = (k_{1},\dots, k_{d}) \in \mathbb{Z}_{>0}^{d}$, $\boldsymbol{z} = (z_{1},\dots, z_{d}) \in \mathcal{D}^{d}(\mathbb{R}_{\ge 0}\setminus\{1\})$ and $\bullet \in \{*, \shuffle\}$, 
we have
\begin{align}\label{eq:main2}
& (-1)^{d}\Li^{\star, \bullet}_{\mathbf{k}}(\boldsymbol{z}) - (-1)^{|\mathbf{k}|}\Li^{\bullet}_{\mathbf{k}}(1/\boldsymbol{z}) \\ \nonumber
&=-\sum_{m=0}^{d-1} (-1)^{m} \Li^{\star, \bullet}_{\mathbf{k}_{1,m}}(\boldsymbol{z}_{1,m})\, \delta_{\mathbf{k}_{m+1,d}}(\boldsymbol{z}_{m+1,d})\\ \nonumber
&\quad + \sum_{0 \le m < n \le d} \sum_{\substack{a+b+ l=k_{n} \\ a,b, l \ge 0}} (-1)^{m+|\mathbf{k}_{n+1,d}|+b} \mathcal{B}_{l}(z_{m+1,d})\\ \nonumber
&\qquad \times \Li^{\star, \bullet}_{\mathbf{k}_{1,m}}(\boldsymbol{z}_{1,m})\, \tilde{\Li}^{a, \bullet}_{\mathbf{k}_{m+1,n-1}}(\boldsymbol{z}_{m+1,n-1})\Li^{b, \bullet}_{\mathbf{k}_{n+1,d}}(1/\boldsymbol{z}_{n+1,d}),
\end{align}
where
\[\delta_{k_{1},\dots,k_{d}}(z_{1}, \dots, z_{d}) \coloneqq \begin{cases}
\frac{(\log(-z_{d}))^{d}}{d!} & (k_{1},\dots,k_{d}) = (z_{1}, \dots, z_{d}) = (\{1\}^{d}), \\
0 &\mathrm{otherwise.}
\end{cases}\]
\end{theorem}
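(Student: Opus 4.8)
The plan is to obtain \eqref{eq:main2} as a limit of \eqref{eq:main}. By Theorem \ref{th:main} together with the analytic continuation of Zhao \cite{Z} (so that \eqref{eq:main}, read with appropriate branches, holds throughout $\mathcal{D}^{d}(\{1\})$), the identity \eqref{eq:main} is available along a path approaching any prescribed point $\boldsymbol{z}^{0}\in\mathcal{D}^{d}(\mathbb{R}_{\ge 0}\setminus\{1\})$. On the locus $z^{0}_{i,j}=1$ several of the multiple polylogarithms in \eqref{eq:main} develop logarithmic singularities; the whole point is to show that these are absorbed into the regularized values $\Li^{\star,\bullet}$, $\Li^{\bullet}$, $\tilde{\Li}^{a,\bullet}$, $\Li^{b,\bullet}$ occurring in \eqref{eq:main2}, up to the explicit residue carried by $\delta$.

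Concretely, I would fix $\boldsymbol{z}^{0}$ and $\bullet\in\{*,\shuffle\}$ and choose a real-analytic family $\boldsymbol{z}(u)\to\boldsymbol{z}^{0}$ as $u\to 0^{+}$ inside $\mathcal{D}^{d}(\{1\})$, say $z_{j}(u)=z^{0}_{j}\,e^{\sqrt{-1}\,u\,\theta_{j}}$, with $(\theta_{1},\dots,\theta_{d})$ chosen so that each consecutive product $z_{i,j}(u)$ with $z^{0}_{i,j}=1$ tends to $1$ from the side fixed by the branch conventions of Section \ref{se:reg} (so that $\log(1-z_{i,j}(u))$, $\log(-z_{i,j}(u))$ and the regularized values are all approached consistently) and no $z_{i,j}(u)$ meets $\mathbb{R}_{\ge 1}$; only finitely many linear conditions on the $\theta_{j}$ are imposed, so such a choice exists. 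Next I would record the asymptotics of each factor in \eqref{eq:main}. Using the regularizations of Section \ref{se:reg}, each multiple polylogarithm $\Li_{\mathbf{m}}(\boldsymbol{w}(u))$ there satisfies an expansion
\[\Li_{\mathbf{m}}(\boldsymbol{w}(u)) = P_{\mathbf{m}}\!\bigl(\log(1-w_{r}(u))\bigr) + \Li^{\bullet}_{\mathbf{m}}(\boldsymbol{w}^{0}) + o(1)\qquad(u\to0),\]
where $w_{r}(u)$ is the coordinate tending to $1$ (namely $z_{m}(u)$ for a left factor, $1/z_{d}(u)$ for a right factor, and analogously for the pieces inside $\tilde{\Li}^{a}$), $P_{\mathbf{m}}$ is a polynomial without constant term whose coefficients are lower-depth regularized values, and $P_{\mathbf{m}}\equiv 0$ unless $\mathbf{m}$ ends in $1$ and $w^{0}_{r}=1$; the composite quantities $\Li^{\star}$, $\Li^{a}$, $\tilde{\Li}^{a}$ inherit such expansions by linearity. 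Finally, $\mathcal{B}_{l}(z_{m+1,d}(u))$ is a polynomial in the bounded quantity $\log(-z_{m+1,d}(u))$ converging to $\mathcal{B}_{l}(z^{0}_{m+1,d})$, with $\mathcal{B}_{0}=1$ and $\mathcal{B}_{1}(z)=\log(-z)$.

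The heart of the proof is then the formal computation of the $u\to0$ limit after substituting these expansions. Using the identity $\log(1-z)-\log(1-1/z)=\log(-z)$, the two singular contributions on the left — from $(-1)^{d}\Li^{\star}_{\mathbf{k}}(\boldsymbol{z})$ and from $(-1)^{|\mathbf{k}|}\Li_{\mathbf{k}}(1/\boldsymbol{z})$ — are polynomials in $\log(1-z_{d}(u))$ and $\log(1-1/z_{d}(u))$ differing only by a polynomial in the bounded quantity $\log(-z_{d}(u))$, while on the right the singular logarithms in each of the variables $z_{m}$, $z_{n-1}$, $1/z_{d}$ cancel among themselves. I would carry this out by comparing, term by term, the coefficients of the various logarithms in the expansions — organized as an induction on $d$, in which letting the outermost variable tend to $1$ rewrites \eqref{eq:main} in depth $d$ in terms of \eqref{eq:main} in depths $d-1$ and $d$ together with the splitting behaviour of $\Li^{\star}$, $\tilde{\Li}^{a}$ and $\Li^{b}$. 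The outcome is that every strictly positive power of every divergent logarithm drops out, and that for each level $m$ the surviving residue vanishes unless $\mathbf{k}_{m+1,d}=\{1\}^{d-m}$ and $\boldsymbol{z}_{m+1,d}=\{1\}^{d-m}$, in which case the difference of the two singular polynomials on the left, combined with the bounded $\mathcal{B}_{1}$-contributions on the right, collapses to $(-1)^{m+1}\Li^{\star,\bullet}_{\mathbf{k}_{1,m}}(\boldsymbol{z}_{1,m})\cdot(\log(-z_{d}))^{d-m}/(d-m)!$; summing over $m$ produces the term $-\sum_{m}(-1)^{m}\Li^{\star,\bullet}_{\mathbf{k}_{1,m}}(\boldsymbol{z}_{1,m})\,\delta_{\mathbf{k}_{m+1,d}}(\boldsymbol{z}_{m+1,d})$, and the remaining constant terms reassemble into the double sum of \eqref{eq:main2}.

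The main obstacle I anticipate is exactly this last step: controlling, with the correct signs and Bernoulli factors, the cancellation of all the divergent logarithms and checking that nothing survives except the clean $\delta$-correction. This is where the precise shapes of $\mathcal{B}_{l}$ (in particular $\mathcal{B}_{0}=1$, $\mathcal{B}_{1}(z)=\log(-z)$, and the even/odd behaviour of $B_{l}(1)$) and of the binomial weights defining $\Li^{a}$ and $\tilde{\Li}^{a}$ must be used, and it is the multiple-polylogarithm analogue of the corresponding computation in Hirose's proof \cite{H}. A secondary point to be verified is path independence: the limit must depend on the approach direction only through the explicit $\log(-z_{d})$ carried by the $\mathcal{B}_{1}$-terms and by $\delta$, all other (divergent) logarithms seeing the path only through $\log(1-z_{r})$ and dropping out, so that \eqref{eq:main2}, read with the conventions of Section \ref{se:reg}, is unambiguous. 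The two cases $\bullet=*$ and $\bullet=\shuffle$ can be run in parallel, since the displayed expansion is available for both regularizations.
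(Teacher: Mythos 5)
Your overall strategy --- obtain \eqref{eq:main2} as a limit of \eqref{eq:main} along a path in $\mathcal{D}^{d}(\mathbb{R}_{\ge 0})$ approaching $\boldsymbol{z}\in\mathcal{D}^{d}(\mathbb{R}_{\ge 0}\setminus\{1\})$, with the divergences absorbed into regularized values and the $\delta$-term arising from $\log(1-z)-\log(1-1/z)=\log(-z)$ --- is exactly the paper's, but the proposal has genuine gaps precisely where you flag ``the main obstacle.'' First, the cancellation of the divergent logarithms is not carried out, and the way you propose to organize it (a simultaneous limit $z_{j}(u)=z_{j}^{0}e^{iu\theta_{j}}$ followed by ``comparing coefficients of the various logarithms'') does not work as stated: when several consecutive products $z_{i,j}(u)$ tend to $1$ at once, all the quantities $\log(1-z_{i,j}(u))$ diverge at the same rate $\log u+O(1)$, so a decomposition of an asymptotic expansion into ``coefficients of the various logarithms'' is not unique and coefficient comparison is not a valid deduction. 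The paper avoids both problems by taking the limits \emph{sequentially}, $v_{1}\to z_{1},\dots,v_{d}\to z_{d}$, so that at each stage exactly one logarithm diverges; it then rewrites both sides of \eqref{eq:main} as polynomials in the single variable $T=-\log(1-v_{i})$ (resp.\ $-\log(1-1/v_{d})$ at the last step) whose coefficients converge with an explicit error $O\bigl((z_{i}-v_{i})\log^{d-n}(z_{i}-v_{i})\bigr)$ (Lemma \ref{le:polynomialz}). Since $T\,O((1-v_{i})^{1-\varepsilon})=O((1-v_{i})^{1-\varepsilon})$, a polynomial identity in $T$ with convergent coefficients forces coefficientwise equality in the limit, and evaluating at $T=0$ yields the regularized identity with no term-by-term bookkeeping of signs and Bernoulli factors at all. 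This device (or an equivalent) is the missing idea; without it your plan remains a plausible outline rather than a proof.

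Second, your claim that the displayed expansion $\Li_{\mathbf{m}}(\boldsymbol{w}(u))=P_{\mathbf{m}}(\log(1-w_{r}(u)))+\Li^{\bullet}_{\mathbf{m}}(\boldsymbol{w}^{0})+o(1)$ ``is available for both regularizations'' is incorrect for $\bullet=*$. A function has only one asymptotic expansion in powers of $\log(1-w_{r}(u))$, and its constant term is the \emph{shuffle}-regularized value; the stuffle-regularized value is the constant term only after applying the comparison map $\rho^{-1}$, which is not a substitution $T\mapsto T+c$ but involves $\Gamma(1+u)e^{\gamma u}$ and therefore mixes the coefficients nontrivially (contributing zeta values). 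The paper handles $\bullet=*$ by applying $\rho^{-1}$ to the entire polynomial identity before setting $T=0$ (equations \eqref{eq:rL+0}--\eqref{eq:rLs-0}); running the two cases ``in parallel'' with the same expansion would produce the wrong constant terms. Finally, note that the $\delta$-correction is not extracted by a residue computation at each level $m$ but falls out of a single algebraic step: re-expanding the $T$-polynomial attached to $\Li^{\star}_{\mathbf{k}}$ in the shifted variable $T-\log(-v_{d})$ (the passage from $\mathcal{L}^{\star,+}$ to $\mathcal{L}^{\star,-}$), whose constant term is computed in \eqref{eq:Ls-0}; your description of this step is consistent with the paper but, again, is asserted rather than proved.
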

The value of $\log(-1)$ can be either $\pi i$ or $-\pi i$, and in either case, equation $\eqref{eq:main2}$ holds. Therefore, we can obtain two different expressions depending on the choice, when equation $\eqref{eq:main2}$ contains $\log(-1)$.

From the identity known as the antipode identity,
\begin{align*}
\Li_{k_{d}, \dots k_{1}}^{*}(z_{d},\dots,z_{1})&=\sum_{i=1}^{d}(-1)^{i+1}\Li^{\star, *}_{k_{1},\dots,k_{i}}(z_{1},\dots,z_{i})\Li^{*}_{k_{d}, \dots k_{i+1}}(z_{d},\dots,z_{i+1})\\
&=\sum_{\substack{(\boldsymbol{z}^{(1)}, \dots, \boldsymbol{z}^{(s)}) = (z_{1},\dots,z_{d})\\ (\mathbf{k}^{(1)}, \dots, \mathbf{k}^{(s)}) = (k_{1},\dots,k_{d})}} (-1)^{d+s}\Li^{\star, *}_{\mathbf{k}^{(1)}}(\boldsymbol{z}^{(1)}) \cdots \Li^{\star, *}_{\mathbf{k}^{(s)}}(\boldsymbol{z}^{(s)}),
\end{align*}
it follows that
\[\tilde{\Li}_{k_{1},\dots,k_{d}}^{a, *}(z_{1},\dots,z_{d}) = \Li_{k_{d},\dots,k_{1}}^{a,*}(z_{d},\dots,z_{1}).\]
Therefore, Theorem \ref{th:Hirose} can be considered as a special case of Theorem \ref{th:main2}.

This paper is organized as follows. Section \ref{sec:Proof} provides the proof of Theorem \ref{th:main}, and Section \ref{sec:Proof2} provides the proof of Theorem \ref{th:main2}.

\section{Proof of Theorem \ref{th:main}} \label{sec:Proof}
This section provides a proof of Theorem \ref{th:main}. The proof is based on the method used by Panzer \cite{P}.
\subsection{Outline of proof of Theorem \ref{th:main}} \label{sec:outline} We prove Theorem \ref{th:main} using differentiation with respect to $z_{1}$ and induction on the weight of $\mathbf{k}$, as in Panzer \cite{P}. When the weight is $1$, Theorem \ref{th:main} states that
\[-\Li_{1}(z_{1}) + \Li_{1}(1/z_{1}) = \mathcal{B}_{1}(z_{1}),\]
namely,
\[\log(1-z_{1}) - \log(1-1/z_{1}) = \log(-z_{1}),\]
which also holds when using the principal branch of the logarithm.

We denote the left-hand side of \eqref{eq:main} as $P_{\mathbf{k}}(\boldsymbol{z})$ and the right-hand side as $Q_{\mathbf{k}}(\boldsymbol{z})$. We aim to prove $P_{\mathbf{k}}(\boldsymbol{z}) = Q_{\mathbf{k}}(\boldsymbol{z})$ using the induction hypothesis. For this purpose, we use the following two lemmas. Lemma \ref{le:diff} will be proved later.
\begin{lemma}\label{le:diff} We have
\begin{align}\label{eq:diffP}
&\partial_{z_{1}} P_{\mathbf{k}}(\boldsymbol{z}) = \begin{dcases}
\frac{P_{{}_{-}\mathbf{k}}(\boldsymbol{z})}{z_{1}} & (k_{1} > 1),\\
\frac{P_{\mathbf{k}_{2,d}}({}_{\times}\boldsymbol{z}_{1,d}) - P_{\mathbf{k}_{2,d}}(\boldsymbol{z}_{2,d})}{1-z_{1}} + (-1)^{|\mathbf{k}_{2,d}|}\frac{\Li_{\mathbf{k}_{2,d}}(1/\boldsymbol{z}_{2,d})}{z_{1}} & (k_{1}=1)
\end{dcases}
\end{align}
and 
\begin{align}\label{eq:diffQ}
&\partial_{z_{1}} {Q}_{\mathbf{k}}(\boldsymbol{z}) = \begin{dcases}
\frac{Q_{{}_{-}\mathbf{k}}(\boldsymbol{z})}{z_{1}} & (k_{1} > 1),\\
\frac{Q_{\mathbf{k}_{2,d}}({}_{\times}\boldsymbol{z}_{1,d}) - Q_{\mathbf{k}_{2,d}}(\boldsymbol{z}_{2,d})}{1-z_{1}} + (-1)^{|\mathbf{k}_{2,d}|}\frac{\Li_{\mathbf{k}_{2,d}}(1/\boldsymbol{z}_{2,d})}{z_{1}} & (k_{1}=1),
\end{dcases}
\end{align}
where ${}_{-}\mathbf{k} \coloneqq (k_{1}-1,k_{2},\dots,k_{d})$ and ${}_{\times}\boldsymbol{z}_{1,d} \coloneqq (z_{1}z_{2},z_{3},\dots,z_{d})$.
\end{lemma}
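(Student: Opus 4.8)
The plan is to verify \eqref{eq:diffP} and \eqref{eq:diffQ} by differentiating the defining expressions of $P_{\mathbf{k}}$ and $Q_{\mathbf{k}}$ term by term. Everything in sight is holomorphic on $\mathcal{D}^{d}(\mathbb{R}_{\geq 0})$, and the contracted arguments ${}_{\times}\boldsymbol{z}_{1,d}=(z_{1}z_{2},z_{3},\dots,z_{d})$, $\boldsymbol{z}_{2,d}$ and $1/\boldsymbol{z}_{2,d}$ lie in $\mathcal{D}^{d-1}(\mathbb{R}_{\geq 0})$ whenever $\boldsymbol{z}\in\mathcal{D}^{d}(\mathbb{R}_{\geq 0})$; hence it suffices to check the identities on the nonempty open set where all the series \eqref{eq:Lise} converge absolutely, the general case following by analytic continuation. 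On that set I would use only the following elementary facts, all read off from \eqref{eq:Lise}: $z_{1}\partial_{z_{1}}\Li_{\mathbf{k}}(\boldsymbol{z})=\Li_{{}_{-}\mathbf{k}}(\boldsymbol{z})$ when $k_{1}>1$; $(1-z_{1})\partial_{z_{1}}\Li_{1,\mathbf{k}_{2,d}}(\boldsymbol{z})=\Li_{\mathbf{k}_{2,d}}(\boldsymbol{z}_{2,d})-z_{1}^{-1}\Li_{\mathbf{k}_{2,d}}({}_{\times}\boldsymbol{z}_{1,d})$ when $k_{1}=1$ and $d\geq 2$, together with $\partial_{z_{1}}\Li_{1}(z_{1})=(1-z_{1})^{-1}$; the chain rule in the variable $1/z_{1}$; and $z\mathcal{B}_{l}'(z)=\mathcal{B}_{l-1}(z)$, equivalently $\sum_{l\geq 0}\mathcal{B}_{l}(z)t^{l}=\tfrac{\pi t}{\sin(\pi t)}(-z)^{t}$, so that $\partial_{z_{1}}\mathcal{B}_{l}(z_{1,d})=z_{1}^{-1}\mathcal{B}_{l-1}(z_{1,d})$.

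For \eqref{eq:diffP}: when $k_{1}>1$, differentiating $\Li^{\star}_{\mathbf{k}}(\boldsymbol{z})=\sum_{(\mathbf{k}';\boldsymbol{z}')\preceq(\mathbf{k};\boldsymbol{z})}\Li_{\mathbf{k}'}(\boldsymbol{z}')$ termwise (all leading entries are $>1$) gives $z_{1}\partial_{z_{1}}\Li^{\star}_{\mathbf{k}}(\boldsymbol{z})=\Li^{\star}_{{}_{-}\mathbf{k}}(\boldsymbol{z})$, while the chain rule gives $z_{1}\partial_{z_{1}}\Li_{\mathbf{k}}(1/\boldsymbol{z})=-\Li_{{}_{-}\mathbf{k}}(1/\boldsymbol{z})$; since $(-1)^{|\mathbf{k}|}=-(-1)^{|{}_{-}\mathbf{k}|}$ this is $z_{1}\partial_{z_{1}}P_{\mathbf{k}}(\boldsymbol{z})=P_{{}_{-}\mathbf{k}}(\boldsymbol{z})$. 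When $k_{1}=1$, I would split the $\preceq$-sum according to whether the first choice merges $k_{1}$ with $k_{2}$ (and $z_{1}$ with $z_{2}$) or not: the merging part equals $\Li^{\star}_{(k_{2}+1,k_{3},\dots,k_{d})}({}_{\times}\boldsymbol{z}_{1,d})$ (leading entry $>1$), the non-merging part equals $\sum_{(\mathbf{k}'';\boldsymbol{z}'')\preceq(\mathbf{k}_{2,d};\boldsymbol{z}_{2,d})}\Li_{1,\mathbf{k}''}(z_{1},\boldsymbol{z}'')$; after differentiating, the $z_{1}^{-1}$-contributions of the two parts combine to leave no $z_{1}^{-1}$-term, yielding $\partial_{z_{1}}\Li^{\star}_{\mathbf{k}}(\boldsymbol{z})=(1-z_{1})^{-1}\big(\Li^{\star}_{\mathbf{k}_{2,d}}(\boldsymbol{z}_{2,d})-\Li^{\star}_{\mathbf{k}_{2,d}}({}_{\times}\boldsymbol{z}_{1,d})\big)$ for $d\geq 2$. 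The companion chain-rule computation for $\Li_{1,\mathbf{k}_{2,d}}(1/\boldsymbol{z})$ produces exactly the reciprocal part of $\tfrac{1}{1-z_{1}}\big(P_{\mathbf{k}_{2,d}}({}_{\times}\boldsymbol{z}_{1,d})-P_{\mathbf{k}_{2,d}}(\boldsymbol{z}_{2,d})\big)$ together with the extra summand $(-1)^{|\mathbf{k}_{2,d}|}z_{1}^{-1}\Li_{\mathbf{k}_{2,d}}(1/\boldsymbol{z}_{2,d})$, and combining gives \eqref{eq:diffP}. The case $d=1$ I would verify by hand; there the $\Li^{\star}$-part and the reciprocal part each contribute a $z_{1}^{-1}$-term not predicted by the decomposition above, but these cancel and \eqref{eq:diffP} holds again.

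For \eqref{eq:diffQ}: differentiate $Q_{\mathbf{k}}(\boldsymbol{z})$ termwise; in the summand indexed by $(m,n,a,b,l)$ the variable $z_{1}$ occurs only in $\Li^{\star}_{\mathbf{k}_{1,m}}(\boldsymbol{z}_{1,m})$ when $m\geq 1$, and only in $\mathcal{B}_{l}(z_{1,d})$ and $\tilde{\Li}^{a}_{\mathbf{k}_{1,n-1}}(\boldsymbol{z}_{1,n-1})$ when $m=0$, since $\Li^{b}_{\mathbf{k}_{n+1,d}}(1/\boldsymbol{z}_{n+1,d})$ involves only $z_{n+1},\dots,z_{d}$. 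The $m\geq 1$ summands are handled exactly as above and reassemble into the matching part of the right side of \eqref{eq:diffQ} (for $m=1$, $k_{1}=1$, the ``extra'' $\tfrac{1}{1-z_{1}}$ from $\partial_{z_{1}}\Li_{1}(z_{1})$ feeds the $-\tfrac{Q_{\mathbf{k}_{2,d}}(\boldsymbol{z}_{2,d})}{1-z_{1}}$ term). Among the $m=0$ summands, $n=1$ is immediate (only $\mathcal{B}_{l}$ depends on $z_{1}$, and $\partial_{z_{1}}\mathcal{B}_{l}(z_{1,d})=z_{1}^{-1}\mathcal{B}_{l-1}(z_{1,d})$ just shifts $k_{1}\mapsto k_{1}-1$). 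For $n\geq 2$ I would pass to a generating parameter $t$: by the antipode identity (Section \ref{se:intro}), which on $\mathcal{D}^{d}(\mathbb{R}_{\geq 0})$ needs no regularization and reads $\tilde{\Li}^{a}_{\mathbf{k}}(\boldsymbol{z})=\Li^{a}_{\overleftarrow{\mathbf{k}}}(\overleftarrow{\boldsymbol{z}})$, one has $\sum_{a\geq 0}\tilde{\Li}^{a}_{\mathbf{k}}(\boldsymbol{z})t^{a}=\Li_{\overleftarrow{\mathbf{k}}}(\overleftarrow{\boldsymbol{z}};t)$, where $\Li_{\mathbf{k}}(\boldsymbol{z};t):=\sum_{0<m_{1}<\cdots<m_{d}}\prod_{i=1}^{d}\frac{z_{i}^{m_{i}}}{(m_{i}+t)^{k_{i}}}$; from this series one reads off $w_{p}\partial_{w_{p}}\Li_{\mathbf{a}}(\boldsymbol{w};t)=\Li_{(a_{1},\dots,a_{p-1},a_{p}-1)}(\boldsymbol{w};t)-t\,\Li_{\mathbf{a}}(\boldsymbol{w};t)$ when $a_{p}>1$, with an analogous contracting rule when $a_{p}=1$. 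Since $\partial_{z_{1}}(-z_{1,d})^{t}=\tfrac{t}{z_{1}}(-z_{1,d})^{t}$, the product rule applied to $\tfrac{\pi t}{\sin(\pi t)}(-z_{1,d})^{t}\,\Li_{\overleftarrow{\mathbf{k}_{1,n-1}}}(\overleftarrow{\boldsymbol{z}_{1,n-1}};t)$ makes the two $t\,\Li$-terms cancel, and what remains is $z_{1}^{-1}$ (if $k_{1}>1$), resp. $(1-z_{1})^{-1}$ (if $k_{1}=1$), times the corresponding generating function for $Q_{{}_{-}\mathbf{k}}(\boldsymbol{z})$, resp. for $Q_{\mathbf{k}_{2,d}}({}_{\times}\boldsymbol{z}_{1,d})$; extracting the $t^{k_{n}}$-coefficient and restoring the factor $(-1)^{|\mathbf{k}_{n+1,d}|+b}\Li^{b}_{\mathbf{k}_{n+1,d}}(1/\boldsymbol{z}_{n+1,d})$ gives \eqref{eq:diffQ}. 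The low-depth cases ($d=1$, e.g. $Q_{(1)}(z_{1})=\mathcal{B}_{1}(z_{1})=\log(-z_{1})$) are checked directly.

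I expect the $m=0$, $n\geq 2$ case of \eqref{eq:diffQ} to be the main obstacle. The bookkeeping behind \eqref{eq:diffP} and the $m\geq 1$ part of \eqref{eq:diffQ} is essentially routine, but $\tilde{\Li}^{a}_{\mathbf{k}_{1,n-1}}(\boldsymbol{z}_{1,n-1})$ does not merely shift its leading index under $\partial_{z_{1}}$ (unlike $\Li^{\star}$), so one must show that $\partial_{z_{1}}$ acting jointly on this factor and on $\mathcal{B}_{l}(z_{1,d})$, once summed over $a+l$, rebuilds precisely the shifted object appearing in $Q_{{}_{-}\mathbf{k}}$ (or $Q_{\mathbf{k}_{2,d}}$). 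Encoding $\tilde{\Li}^{a}$ through the antipode identity and the $t$-shifted polylogarithm turns this into the trivial cancellation $t\,\Li-t\,\Li=0$; carried out naively it comes down to the repeated use of $\binom{k+l-1}{l}=\binom{k+l-2}{l}+\binom{k+l-2}{l-1}$, which is more awkward to organize. Accordingly I would isolate the generating-function identity for the pair $(\mathcal{B}_{l},\tilde{\Li}^{a})$ as a separate step before assembling \eqref{eq:diffQ}.
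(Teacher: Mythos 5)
Your proposal is correct, and at its core it is the paper's proof: the same termwise differentiation, the same splitting of $Q_{\mathbf{k}}$ according to whether $z_{1}$ sits in the factor $\Li^{\star}_{\mathbf{k}_{1,m}}$ ($m\ge 1$) or in the pair $\mathcal{B}_{l}(z_{1,d})\,\tilde{\Li}^{a}_{\mathbf{k}_{1,n-1}}$ ($m=0$), with the subcase $n=1$ treated separately, and the same key cancellation. Where you differ is in how that cancellation is organized for $m=0$, $n\ge 2$: the paper simply records the derivative formula $\partial_{z_{1}}\tilde{\Li}^{a}_{\mathbf{k}}(\boldsymbol{z})=\tfrac{1}{z_{1}}\bigl(\tilde{\Li}^{a}_{{}_{-}\mathbf{k}}(\boldsymbol{z})-\tilde{\Li}^{a-1}_{\mathbf{k}}(\boldsymbol{z})\bigr)$ for $k_{1}>1$ (and its contraction analogue for $k_{1}=1$) as a ``straightforward calculation'' from the definition, and then cancels $\tfrac{1}{z_{1}}\mathcal{B}_{l-1}\tilde{\Li}^{a}$ against $-\tfrac{1}{z_{1}}\mathcal{B}_{l}\tilde{\Li}^{a-1}$ inside the sum over $a+b+l=k_{n}$; your identity $t\,\Li-t\,\Li=0$ for the generating function $\tfrac{\pi t}{\sin(\pi t)}(-z_{1,d})^{t}\,\Li_{\overleftarrow{\mathbf{k}_{1,n-1}}}(\overleftarrow{\boldsymbol{z}_{1,n-1}};t)$ is precisely the coefficient-of-$t^{k_{n}-b}$ form of that cancellation, so the two arguments are equivalent. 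Your packaging buys a conceptual explanation of why the $a\mapsto a-1$ term arises and disappears, at the cost of an extra input: the unregularized antipode identity $\tilde{\Li}^{a}_{\mathbf{k}}(\boldsymbol{z})=\Li^{a}_{\overleftarrow{\mathbf{k}}}(\overleftarrow{\boldsymbol{z}})$ on $\mathcal{D}^{d}(\mathbb{R}_{\ge 0})$, which the paper states only for the $*$-regularized values; it does hold here, by the identity theorem from the region of absolute convergence, but note that $\mathcal{D}^{d}(\mathbb{R}_{\ge 0})$ is disconnected, so your blanket appeal to analytic continuation should include the (easy) observation that the convergence region meets every connected component, since the components are determined by the arguments of the $z_{i,j}$ while convergence constrains only their moduli. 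The remaining bookkeeping (the split of the $\preceq$-sum for $\Li^{\star}$, the reassembly of the $m\ge 1$ terms into $\pm Q_{\mathbf{k}_{2,d}}$, the $d=1$ edge cases and the conventions $\Li^{\star}_{\mathbf{k}_{2,d}}({}_{\times}\boldsymbol{z}_{1,d})=0$ when $d=1$) matches the paper.
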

\begin{lemma}[{Panzer \cite[Lemma 2.3]{P}}]\label{le:Panzer} We have
\[\lim_{z_{1} \to 0}\left(\Li_{\mathbf{k}}(1/\boldsymbol{z})+\sum_{\substack{b+ l=k_{1} \\ b, l \ge 0}}(-1)^{k_{1}+b}\mathcal{B}_{l}(z_{1,d})\Li^{b}_{\mathbf{k}_{2,d}}(1/\boldsymbol{z}_{2,d}) \right) = 0.\]
\end{lemma}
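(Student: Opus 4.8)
The plan is to peel off the innermost integration in the iterated-integral representation \eqref{eq:Liint} and reduce matters to the classical one-variable polylogarithm inversion formula. By \eqref{eq:Liint},
\[\Li_{\mathbf{k}}(1/\boldsymbol{z})=(-1)^{d}\int_{0}^{1}\omega_{z_{1,d}}\omega_{0}^{k_{1}-1}\omega_{z_{2,d}}\omega_{0}^{k_{2}-1}\cdots\omega_{z_{d,d}}\omega_{0}^{k_{d}-1},\]
and since $z_{i,d}$ does not involve $z_{1}$ for $i\ge 2$, the variable $z_{1}$ occurs only in the leftmost form $\omega_{z_{1,d}}$. First I would carry out the innermost $k_{1}$ integrations: rescaling $t\mapsto t/s$ in \eqref{eq:Liint} gives $\int_{0}^{s}\omega_{z_{1,d}}\omega_{0}^{k_{1}-1}=-\Li_{k_{1}}(s/z_{1,d})$, whence $\Li_{\mathbf{k}}(1/\boldsymbol{z})$ equals $(-1)^{d}$ times the iterated integral over $[0,1]$ of $\omega_{z_{2,d}}\omega_{0}^{k_{2}-1}\cdots\omega_{z_{d,d}}\omega_{0}^{k_{d}-1}$ with the extra factor $-\Li_{k_{1}}(s/z_{1,d})$ inserted at the variable $s$ of its first form $\omega_{z_{2,d}}$.

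Next I would feed in the inversion formula for $\Li_{k_{1}}$ (Jonqui\`ere's formula), which in the branch conventions used here reads
\[-\Li_{k_{1}}(s/z_{1,d})=(-1)^{k_{1}}\Li_{k_{1}}(z_{1,d}/s)+\sum_{j=0}^{k_{1}}\frac{(-1)^{j}}{(k_{1}-j)!}\,\mathcal{B}_{j}(z_{1,d})\,(\log s)^{k_{1}-j},\]
the coefficients being obtained from the addition and reflection formulas for the Bernoulli polynomials together with $\mathcal{B}_{j}(1/z)=(-1)^{j}\mathcal{B}_{j}(z)$. This identity is exactly the sharpened $d=1$ case of the lemma and can itself be proved by induction on $k_{1}$, differentiating in $z_{1,d}$, the base case $k_{1}=1$ being the logarithm identity recorded in Section \ref{sec:outline}. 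Substituting it and using $(\log s)^{m}/m!=\int_{0}^{s}\omega_{0}^{m}$, the polynomial part of $-\Li_{k_{1}}(s/z_{1,d})$ contributes
\[(-1)^{d}\sum_{j=0}^{k_{1}}(-1)^{j}\,\mathcal{B}_{j}(z_{1,d})\int_{0}^{1}\omega_{0}^{\,k_{1}-j}\,\omega_{z_{2,d}}\omega_{0}^{k_{2}-1}\cdots\omega_{z_{d,d}}\omega_{0}^{k_{d}-1},\]
and it remains to match this against the correction sum in the lemma and to dispose of the $\Li_{k_{1}}(z_{1,d}/s)$ part.

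For the matching I would use the identity
\[(-1)^{d-1}\int_{0}^{1}\omega_{0}^{\,b}\,\omega_{z_{2,d}}\omega_{0}^{k_{2}-1}\cdots\omega_{z_{d,d}}\omega_{0}^{k_{d}-1}=\Li^{\,b}_{\mathbf{k}_{2,d}}(1/\boldsymbol{z}_{2,d}),\]
which says that prepending $b$ copies of $\omega_{0}$ realizes the operator $\Li^{b}$; it follows by comparing $\sum_{b}x^{b}\int_{0}^{1}\omega_{0}^{b}W=\int_{0}^{1}t^{x}W$ (with $t$ the variable of the first form of the word $W$) with $\sum_{b}x^{b}\Li^{b}_{\mathbf{k}}(\boldsymbol{z})=\sum_{0<m_{1}<\cdots<m_{d}}\prod_{i}z_{i}^{m_{i}}/(m_{i}+x)^{k_{i}}$, the latter being immediate from \eqref{eq:Lise} and the expansion $\sum_{l\ge 0}\binom{k+l-1}{l}t^{l}=(1-t)^{-k}$. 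A short re-indexing ($b=k_{1}-j$, $l=j$, so that $(-1)^{j}=(-1)^{k_{1}+b}$) turns the polynomial part into $-\sum_{b+l=k_{1}}(-1)^{k_{1}+b}\mathcal{B}_{l}(z_{1,d})\,\Li^{b}_{\mathbf{k}_{2,d}}(1/\boldsymbol{z}_{2,d})$, which is exactly minus the correction sum; hence the whole expression under the limit in the lemma reduces to $(-1)^{d+k_{1}}\int_{0}^{1}\Li_{k_{1}}(z_{1,d}/s)\,\omega_{z_{2,d}}\omega_{0}^{k_{2}-1}\cdots\omega_{z_{d,d}}\omega_{0}^{k_{d}-1}$. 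Finally, I would show this tends to $0$ as $z_{1}\to 0$ by splitting the $s$-integral at $s=|z_{1,d}|$, using $\Li_{k_{1}}(z_{1,d}/s)\to 0$ for $s>|z_{1,d}|$ and the bound $\Li_{k_{1}}(w)=O\bigl((\log|w|)^{k_{1}}\bigr)$ on the complementary set, which has measure $O(|z_{1,d}|)$. The hard part will be the careful tracking of the branch of $\log$ inside $\mathcal{B}_{l}$ when applying the inversion formula — with the attendant $\log(-1)=\pm\pi i$ ambiguity — and the uniformity in this final limit; beyond that there is no real difficulty, and one could alternatively just cite \cite[Lemma 2.3]{P}.
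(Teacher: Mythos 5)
The paper offers no proof of this lemma at all: it is imported verbatim as \cite[Lemma 2.3]{P}, so there is nothing internal to compare against, and your self-contained reconstruction is welcome. Your argument is sound and is, in spirit, the natural one (and close to Panzer's original): peel off the innermost $k_{1}$ integrations of \eqref{eq:Liint} to expose $-\Li_{k_{1}}(s/z_{1,d})$, apply the depth-one inversion formula, identify the polynomial part with the correction sum via the generating-function identity $\sum_{b}x^{b}\Li^{b}_{\mathbf{k}}(\boldsymbol{z})=\sum_{0<m_{1}<\cdots<m_{d}}\prod_{i}z_{i}^{m_{i}}(m_{i}+x)^{-k_{i}}$, and kill the remaining $\Li_{k_{1}}(z_{1,d}/s)$ term in the limit. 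I checked the signs: with $\Li_{k}(w)+(-1)^{k}\Li_{k}(1/w)=-\mathcal{B}_{k}(w)$ (the $d=1$ case of \eqref{eq:main}), the addition/reflection formulas for $B_{k}$ give exactly your expansion of $\mathcal{B}_{k_{1}}(s/z_{1,d})$, and the re-indexing $b=k_{1}-j$ reproduces $(-1)^{k_{1}+b}$; moreover the branch worry is benign here, since $z_{1,d}\notin\mathbb{R}_{\ge 0}$ and $s\in(0,1]$ force $\log(-s/z_{1,d})=\log s-\log(-z_{1,d})$ with principal branches throughout. Two small points to tighten. First, the identity $(\log s)^{m}/m!=\int_{0}^{s}\omega_{0}^{m}$ is false as written (the right side diverges at the lower endpoint); you should either read $\int_{0}^{1}\omega_{0}^{b}W$ as shorthand for inserting the convergent weight $(\log s)^{b}/b!$ at the first variable of $W$ — which is all your generating-function computation actually uses — or phrase it as a shuffle regularization at the tangential basepoint $0$. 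Second, in the final limit the $\Li_{k_{1}}(z_{1,d}/s)$ factor sits inside a $(|\mathbf{k}|-k_{1})$-fold outer iterated integral, so the splitting at $s=|z_{1,d}|$ must produce a bound, say $O(|z_{1,d}|^{1-\varepsilon})$, that is uniform in the upper limit of the innermost integration before you may interchange the limit with the outer integrations; since the outer forms do not involve $z_{1}$ and their singularities stay off $[0,1]$ for $\boldsymbol{z}\in\mathcal{D}^{d}(\mathbb{R}_{\ge 0})$, this is routine but should be said.
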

By Lemma \ref{le:diff} and the induction hypothesis, we have $\partial_{z_{1}} P_{\mathbf{k}}(\boldsymbol{z}) = \partial_{z_{1}} Q_{\mathbf{k}}(\boldsymbol{z})$. On the other hand, Lemma \ref{le:Panzer} implies
\[\lim_{z_{1} \to 0} \left(P_{\mathbf{k}}(\boldsymbol{z})-Q_{\mathbf{k}}(\boldsymbol{z})\right) = 0.\]
Note that $z_{1}$ must tend to $0$ while keeping $\boldsymbol{z} \in \mathcal{D}^{d}(\mathbb{R}_{\ge 0})$, that is, without any of $z_{1,i}$ $(1 \le i \le d)$ crossing $\mathbb{R}_{\ge 0}$. This can be achieved from any point in $\mathcal{D}^{d}(\mathbb{R}_{\ge 0})$. Therefore, we have $P_{\mathbf{k}}(\boldsymbol{z}) = Q_{\mathbf{k}}(\boldsymbol{z})$. Thus, all that remains is to prove Lemma \ref{le:diff}. 

\subsection{Proof of Lemma \ref{le:diff}}
This subsection proves Lemma \ref{le:diff}. The following differential formulas can be obtained through straightforward calculation:
\begin{align*}
&\partial_{z_{1}} \mathcal{B}_{l}(z_{1,d}) =\begin{dcases} \frac{1}{z_{1}}\mathcal{B}_{l-1}(z_{1,d}) &(l>0),\\
0 & (l=0),
\end{dcases}\\
&\partial_{z_{1}} \Li_{\mathbf{k}}(\boldsymbol{z}) =\begin{dcases}
\frac{1}{z_{1}}\Li_{{}_{-}\mathbf{k}}(\boldsymbol{z})&(k_{1}>1), \\
\frac{1}{1-z_{1}}\Li_{\mathbf{k}_{2,d}}(\boldsymbol{z}_{2,d})-\frac{1}{z_{1}(1-z_{1})}\Li_{\mathbf{k}_{2,d}}({}_{\times}\boldsymbol{z}_{1,d})&(k_{1}=1),
\end{dcases}\\
&\partial_{z_{1}} \Li^{\star}_{\mathbf{k}}(\boldsymbol{z}) =\begin{dcases}
\frac{1}{z_{1}}\Li^{\star}_{{}_{-}\mathbf{k}}(\boldsymbol{z})&(k_{1}>1),\\
\frac{1}{1-z_{1}}(\Li^{\star}_{\mathbf{k}_{2,d}}(\boldsymbol{z}_{2,d})-\Li^{\star}_{\mathbf{k}_{2,d}}({}_{\times}\boldsymbol{z}_{1,d}))&(k_{1}=1),
\end{dcases}\\
&\partial_{z_{1}} \tilde{\Li}_{\mathbf{k}}^{a}(\boldsymbol{z}) =\begin{dcases}
\frac{1}{z_{1}}(\tilde{\Li}_{{}_{-}\mathbf{k}}^{a}(\boldsymbol{z}) - \tilde{\Li}_{\mathbf{k}}^{a-1}(\boldsymbol{z}))&(k_{1}>1), \\
\frac{1}{1-z_{1}}\tilde{\Li}_{\mathbf{k}_{2,d}}^{a}({}_{\times}\boldsymbol{z}_{1,d})-\frac{1}{z_{1}}\tilde{\Li}_{\mathbf{k}}^{a-1}(\boldsymbol{z})&(k_{1}=1).
\end{dcases}
\end{align*}
Here, when $d=1$, we consider $\Li_{\mathbf{k}_{2,d}}({}_{\times}\boldsymbol{z}_{1,d})$, $\Li^{\star}_{\mathbf{k}_{2,d}}({}_{\times}\boldsymbol{z}_{1,d})$ and $\tilde{\Li}_{\mathbf{k}_{2,d}}^{a}({}_{\times}\boldsymbol{z}_{1,d})$ $(a > 0)$ as $0$, and $\tilde{\Li}_{\mathbf{k}_{2,d}}^{a}({}_{\times}\boldsymbol{z}_{1,d})$ $(a = 0)$ as $1$.

Equation \eqref{eq:diffP} can be immediately verified from the above formulas. We now proceed to prove equation \eqref{eq:diffQ}. If we set
\begin{align*}
&R^{n}_{\mathbf{k}_{m+1,d}}(\boldsymbol{z}_{m+1,d})\\
&\coloneqq \sum_{\substack{a+b+ l=k_{n} \\ a,b, l \ge 0}}  (-1)^{b} \mathcal{B}_{l}(z_{m+1,d})\, \tilde{\Li}^{a}_{\mathbf{k}_{m+1,n-1}}(\boldsymbol{z}_{m+1,n-1})\Li^{b}_{\mathbf{k}_{n+1,d}}(1/\boldsymbol{z}_{n+1,d}),
\end{align*}
then $Q_{\mathbf{k}}(\boldsymbol{z})$ can be represented as
\[Q_{\mathbf{k}}(\boldsymbol{z}) = \sum_{0 \le m < n \le d} (-1)^{m+|\mathbf{k}_{n+1,d}|} \Li^{\star}_{\mathbf{k}_{1,m}}(\boldsymbol{z}_{1,m})\, R^{n}_{\mathbf{k}_{m+1,d}}(\boldsymbol{z}_{m+1,d}).\]
The factor $R^{n}_{\mathbf{k}_{m+1,d}}(\boldsymbol{z}_{m+1,d})$ depends on $z_{1}$ only when $m=0$, in which case the following lemma gives its derivative.
\begin{lemma}\label{le:diifR}We have
\[\partial_{z_{1}} R^{n}_{\mathbf{k}_{1,d}}(\boldsymbol{z}_{1,d}) = \begin{dcases}
\frac{1}{z_{1}} R^{n}_{{}_{-}\mathbf{k}_{1,d}}(\boldsymbol{z}_{1,d}) & (k_{1} >1),\\
\frac{1}{z_{1}} \Li^{}_{\mathbf{k}_{2,d}}(1/\boldsymbol{z}_{2,d}) & (k_{1} = 1, n=1),\\
\frac{1}{1-z_{1}} R^{n}_{\mathbf{k}_{2,d}}({}_{\times}\boldsymbol{z}_{1,d}) & (k_{1} = 1, n>1).
\end{dcases}\]
\end{lemma}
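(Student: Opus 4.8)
The plan is to differentiate $R^{n}_{\mathbf{k}_{1,d}}(\boldsymbol{z}_{1,d})$ term by term, using the differential formulas recalled above, and then to reorganize the result so that the spurious pieces cancel after a shift of summation index. The starting point is that in the summand defining $R^{n}_{\mathbf{k}_{1,d}}(\boldsymbol{z}_{1,d})$ the factor $\Li^{b}_{\mathbf{k}_{n+1,d}}(1/\boldsymbol{z}_{n+1,d})$ does not depend on $z_{1}$, so only $\mathcal{B}_{l}(z_{1,d})$ and $\tilde{\Li}^{a}_{\mathbf{k}_{1,n-1}}(\boldsymbol{z}_{1,n-1})$ need to be differentiated, via the Leibniz rule.

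First I would dispose of the case $n=1$ by a direct computation. Since $\tilde{\Li}^{a}_{\emptyset}(\emptyset)$ equals $1$ for $a=0$ and $0$ for $a>0$, and $\mathcal{B}_{0}\equiv 1$, one has $R^{1}_{\mathbf{k}_{1,d}}(\boldsymbol{z}_{1,d})=\sum_{b+l=k_{1}}(-1)^{b}\mathcal{B}_{l}(z_{1,d})\,\Li^{b}_{\mathbf{k}_{2,d}}(1/\boldsymbol{z}_{2,d})$; differentiating and substituting $l\mapsto l+1$ gives $\partial_{z_{1}}R^{1}_{\mathbf{k}_{1,d}}(\boldsymbol{z}_{1,d})=\tfrac{1}{z_{1}}\sum_{b+l=k_{1}-1}(-1)^{b}\mathcal{B}_{l}(z_{1,d})\,\Li^{b}_{\mathbf{k}_{2,d}}(1/\boldsymbol{z}_{2,d})$, which is $\tfrac{1}{z_{1}}R^{1}_{{}_{-}\mathbf{k}_{1,d}}(\boldsymbol{z}_{1,d})$ when $k_{1}>1$ and (using $\Li^{0}=\Li$) $\tfrac{1}{z_{1}}\Li_{\mathbf{k}_{2,d}}(1/\boldsymbol{z}_{2,d})$ when $k_{1}=1$, matching the first two cases of the lemma.

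For $n>1$ I would expand $\partial_{z_{1}}\bigl(\mathcal{B}_{l}(z_{1,d})\,\tilde{\Li}^{a}_{\mathbf{k}_{1,n-1}}(\boldsymbol{z}_{1,n-1})\bigr)$ by Leibniz, insert the formula for $\partial_{z_{1}}\mathcal{B}_{l}$ and the appropriate formula for $\partial_{z_{1}}\tilde{\Li}^{a}_{\mathbf{k}_{1,n-1}}$ (separately in the cases $k_{1}>1$ and $k_{1}=1$), and then sum over $a+b+l=k_{n}$ against the weight $(-1)^{b}\Li^{b}_{\mathbf{k}_{n+1,d}}(1/\boldsymbol{z}_{n+1,d})$. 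Each Leibniz expansion produces three terms. When $k_{1}>1$, the term carrying $\tilde{\Li}^{a}_{{}_{-}\mathbf{k}_{1,n-1}}$ reassembles exactly into $\tfrac{1}{z_{1}}R^{n}_{{}_{-}\mathbf{k}_{1,d}}(\boldsymbol{z}_{1,d})$ (here one uses that $k_{n}$ is unaffected by the operation ${}_{-}$ because $n>1$); when $k_{1}=1$, the term carrying $\tilde{\Li}^{a}_{\mathbf{k}_{2,n-1}}({}_{\times}\boldsymbol{z}_{1,n-1})$ reassembles into $\tfrac{1}{1-z_{1}}R^{n}_{\mathbf{k}_{2,d}}({}_{\times}\boldsymbol{z}_{1,d})$, where the identification rests on the equality $(z_{1}z_{2})z_{3}\cdots z_{d}=z_{1,d}$ of Bernoulli arguments and, at the boundary $n=2$ where $\mathbf{k}_{2,n-1}=\emptyset$, on the convention $\tilde{\Li}^{0}_{\emptyset}(\emptyset)=1$, $\tilde{\Li}^{a}_{\emptyset}(\emptyset)=0$ ($a>0$) being used consistently on both sides.

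In both cases the leftover is $\tfrac{1}{z_{1}}\sum_{a+b+l=k_{n}}(-1)^{b}\Li^{b}_{\mathbf{k}_{n+1,d}}(1/\boldsymbol{z}_{n+1,d})\bigl(\mathcal{B}_{l-1}(z_{1,d})\tilde{\Li}^{a}_{\mathbf{k}_{1,n-1}}(\boldsymbol{z}_{1,n-1})-\mathcal{B}_{l}(z_{1,d})\tilde{\Li}^{a-1}_{\mathbf{k}_{1,n-1}}(\boldsymbol{z}_{1,n-1})\bigr)$ with the conventions $\mathcal{B}_{-1}=0$ and $\tilde{\Li}^{-1}=0$, and I would show it vanishes by substituting $l\mapsto l+1$ in the first summand and $a\mapsto a+1$ in the second, after which both become the identical sum over $a+b+l=k_{n}-1$. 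Conceptually there is nothing beyond a mechanical use of the stated differential formulas; the only delicate point --- and the one I expect to be the main obstacle --- is keeping this index-shift cancellation and the various boundary conventions ($n=1$ and $n=2$; $\mathcal{B}_{-1}$ and $\tilde{\Li}^{-1}$; $\mathcal{B}_{0}\equiv1$; $\Li^{0}=\Li$) exactly straight.
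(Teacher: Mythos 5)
Your proposal is correct and follows essentially the same route as the paper: dispose of $n=1$ directly, then for $n>1$ apply Leibniz with the stated differential formulas and observe that the $\mathcal{B}_{l-1}\tilde{\Li}^{a}$ and $\mathcal{B}_{l}\tilde{\Li}^{a-1}$ contributions cancel after summing over $a+b+l=k_{n}$ (the paper merely says ``some terms cancel out'' where you make the index shifts $l\mapsto l+1$, $a\mapsto a+1$ explicit). The boundary conventions you flag are handled the same way in the paper, so there is no gap.
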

\begin{proof}
When $n=1$, since
\[R^{1}_{\mathbf{k}_{1,d}}(\boldsymbol{z}_{1,d}) = \sum_{\substack{b+ l=k_{1} \\ b, l \ge 0}} (-1)^{b} \mathcal{B}_{l}(z_{1,d})\Li^{b}_{\mathbf{k}_{2,d}}(1/\boldsymbol{z}_{2,d}),\]
we have
\[\partial_{z_{1}} R^{1}_{\mathbf{k}_{1,d}}(\boldsymbol{z}_{1,d}) = \begin{dcases}\frac{1}{z_{1}}R^{1}_{{}_{-}\mathbf{k}_{1,d}}(\boldsymbol{z}_{1,d}) & (k_{1} >1),\\
\frac{1}{z_{1}} \Li^{}_{\mathbf{k}_{2,d}}(1/\boldsymbol{z}_{2,d}) & (k_{1} = 1).
\end{dcases}\]
In what follows, we assume $n>1$. If $k_{1} > 1$, we have
\begin{align*}
&\partial_{z_{1}} \left(\mathcal{B}_{l}(z_{1,d})\, \tilde{\Li}^{a}_{\mathbf{k}_{1,n-1}}(\boldsymbol{z}_{1,n-1}) \right)\\
& = 
\begin{dcases}
\frac{1}{z_{1}}  \mathcal{B}_{l}(z_{1,d}) \left(\tilde{\Li}^{a}_{{}_{-}\mathbf{k}_{1,n-1}}(\boldsymbol{z}_{1,n-1}) - \tilde{\Li}^{a-1}_{\mathbf{k}_{1,n-1}}(\boldsymbol{z}_{1,n-1})\right) & (l=0),\\
\begin{aligned}
&\frac{1}{z_{1}} \mathcal{B}_{l-1}(z_{1,d})\, \tilde{\Li}^{a}_{\mathbf{k}_{1,n-1}}(\boldsymbol{z}_{1,n-1}) \\
&\quad +\frac{1}{z_{1}}  \mathcal{B}_{l}(z_{1,d})\left(\tilde{\Li}^{a}_{{}_{-}\mathbf{k}_{1,n-1}}(\boldsymbol{z}_{1,n-1}) - \tilde{\Li}^{a-1}_{\mathbf{k}_{1,n-1}}(\boldsymbol{z}_{1,n-1})\right)
\end{aligned} & (l> 0).
\end{dcases}
\end{align*}
Because some terms cancel out, we obtain
\begin{align*}
\partial_{z_{1}} R^{n}_{\mathbf{k}_{1,d}}(\boldsymbol{z}_{1,d})
&= \frac{1}{z_{1}} \sum_{\substack{a+b+ l=k_{n} \\ a,b, l \ge 0}}  (-1)^{b} \mathcal{B}_{l}(z_{1,d})\, \tilde{\Li}^{a}_{{}_{-}\mathbf{k}_{1,n-1}}(\boldsymbol{z}_{1,n-1})\Li^{b}_{\mathbf{k}_{n+1,d}}(1/\boldsymbol{z}_{n+1,d})\\
&=\frac{1}{z_{1}} R^{n}_{{}_{-}\mathbf{k}_{1,d}}(\boldsymbol{z}_{1,d}).
\end{align*}
On the other hand, if $k_{1} = 1$, we have
\begin{align*}
&\partial_{z_{1}} \left(\mathcal{B}_{l}(z_{1,d})\, \tilde{\Li}^{a}_{\mathbf{k}_{1,n-1}}(\boldsymbol{z}_{1,n-1}) \right)\\
& = 
\begin{dcases}
\frac{1}{1-z_{1}}\mathcal{B}_{l}(z_{1,d})\, \tilde{\Li}_{\mathbf{k}_{2,n-1}}^{a}({}_{\times}\boldsymbol{z}_{1,n-1})-\frac{1}{z_{1}}\mathcal{B}_{l}(z_{1,d})\, \tilde{\Li}_{\mathbf{k}_{1,n-1}}^{a-1}(\boldsymbol{z}_{1,n-1}) & (l=0),\\
\begin{aligned}
&\frac{1}{z_{1}} \mathcal{B}_{l-1}(z_{1,d})\, \tilde{\Li}_{\mathbf{k}_{1,n-1}}^{a}(\boldsymbol{z}_{1,n-1}) \\
& \quad +\frac{1}{1-z_{1}}\mathcal{B}_{l}(z_{1,d})\, \tilde{\Li}_{\mathbf{k}_{2,n-1}}^{a}({}_{\times}\boldsymbol{z}_{1,n-1})-\frac{1}{z_{1}}\mathcal{B}_{l}(z_{1,d})\, \tilde{\Li}_{\mathbf{k}_{1,n-1}}^{a-1}(\boldsymbol{z}_{1,n-1})
\end{aligned} & (l>0).
\end{dcases}
\end{align*}
Because some terms again cancel out, we obtain
\begin{align*}
\partial_{z_{1}} R^{n}_{\mathbf{k}_{1,d}}(\boldsymbol{z}_{1,d})
&= \frac{1}{1-z_{1}} \sum_{\substack{a+b+ l=k_{n} \\ a,b, l \ge 0}}  (-1)^{b} \mathcal{B}_{l}(z_{1,d})\, \tilde{\Li}_{\mathbf{k}_{2,n-1}}^{a}({}_{\times}\boldsymbol{z}_{1,n-1}) \Li^{b}_{\mathbf{k}_{n+1,d}}(1/\boldsymbol{z}_{n+1,d})\\
&=\frac{1}{1-z_{1}} R^{n}_{\mathbf{k}_{2,d}}({}_{\times}\boldsymbol{z}_{1,d}).
\end{align*}
Thus, the lemma has been verified in all cases.
\end{proof}

We now return to the proof of equation \eqref{eq:diffQ}. We decompose $Q_{\mathbf{k}}(\boldsymbol{z})$ into the following two parts:
\begin{align*}
Q_{\mathbf{k}}(\boldsymbol{z}) &= \sum_{0 < n \le d} (-1)^{|\mathbf{k}_{n+1,d}|} R^{n}_{\mathbf{k}_{1,d}}(\boldsymbol{z}_{1,d})\\
&\quad  + \sum_{0 < m < n \le d} (-1)^{m+|\mathbf{k}_{n+1,d}|} \Li^{\star}_{\mathbf{k}_{1,m}}(\boldsymbol{z}_{1,m})\, R^{n}_{\mathbf{k}_{m+1,d}}(\boldsymbol{z}_{m+1,d}),
\end{align*}
and compute its derivative. If $k_{1} > 1$, then we have
\begin{align*}
\partial_{z_{1}} Q_{\mathbf{k}}(\boldsymbol{z}) &= \frac{1}{z_{1}} \sum_{0 < n \le d} (-1)^{|\mathbf{k}_{n+1,d}|} R^{n}_{{}_{-}\mathbf{k}_{1,d}}(\boldsymbol{z}_{1,d})\\
&\quad  + \frac{1}{z_{1}} \sum_{0 < m < n \le d} (-1)^{m+|\mathbf{k}_{n+1,d}|} \Li^{\star}_{{}_{-}\mathbf{k}_{1,m}}(\boldsymbol{z}_{1,m})\, R^{n}_{\mathbf{k}_{m+1,d}}(\boldsymbol{z}_{m+1,d})\\
&=\frac{1}{z_{1}}  Q_{{}_{-}\mathbf{k}}(\boldsymbol{z})
\end{align*}
and if $k_{1} = 1$, then we have
\begin{align*}
\partial_{z_{1}} Q_{\mathbf{k}}(\boldsymbol{z}) &= (-1)^{|\mathbf{k}_{2,d}|} \frac{1}{z_{1}}  \Li^{}_{\mathbf{k}_{2,d}}(1/\boldsymbol{z}_{2,d})\\
&\quad + \frac{1}{1-z_{1}} \sum_{1 < n \le d} (-1)^{|\mathbf{k}_{n+1,d}|} R^{n}_{\mathbf{k}_{2,d}}({}_{\times}\boldsymbol{z}_{1,d})\\
&\quad + \frac{1}{1-z_{1}} \sum_{0 < m < n \le d} (-1)^{m+|\mathbf{k}_{n+1,d}|} \Li^{\star}_{\mathbf{k}_{2,m}}(\boldsymbol{z}_{2,m})\, R^{n}_{\mathbf{k}_{m+1,d}}(\boldsymbol{z}_{m+1,d})\\
&\quad - \frac{1}{1-z_{1}} \sum_{1 < m < n \le d} (-1)^{m+|\mathbf{k}_{n+1,d}|} \Li^{\star}_{\mathbf{k}_{2,m}}({}_{\times}\boldsymbol{z}_{1,m})\, R^{n}_{\mathbf{k}_{m+1,d}}(\boldsymbol{z}_{m+1,d})\\
&= \frac{Q_{\mathbf{k}_{2,d}}({}_{\times}\boldsymbol{z}_{1,d}) - Q_{\mathbf{k}_{2,d}}(\boldsymbol{z}_{2,d})}{1-z_{1}} + (-1)^{|\mathbf{k}_{2,d}|}\frac{\Li_{\mathbf{k}_{2,d}}(1/\boldsymbol{z}_{2,d})}{z_{1}}.
\end{align*}
This completes the proof of Lemma \ref{le:diff}, and thus completes the proof of Theorem \ref{th:main}.

\section{Proof of Theorem \ref{th:main2}}\label{sec:Proof2}
This section provides the proof of Theorem \ref{th:main2}. The method of proof is to compute the limit in equation \eqref{eq:main} using regularization techniques.
\subsection{Outline of proof of Theorem \ref{th:main2}}\label{subsec:outline2}
We denote the left-hand side of \eqref{eq:main} by $P_{\mathbf{k}}(\boldsymbol{z})$ and the right-hand side by $Q_{\mathbf{k}}(\boldsymbol{z})$, as in Section \ref{sec:outline}. To prove Theorem \ref{th:main2}, we fix $\boldsymbol{z} = (z_{1}, \dots ,z_{d}) \in \mathcal{D}^{d}(\mathbb{R}_{\ge 0}\setminus\{1\})$, and compute the limit of both sides of 
\begin{equation}
P_{\mathbf{k}}(\boldsymbol{v}) = Q_{\mathbf{k}}(\boldsymbol{v}) \label{eq:P=Q}
\end{equation}
as $\boldsymbol{v}=(v_{1},\dots,v_{d})$ tends to $\boldsymbol{z}$ from within $\mathcal{D}^{d}(\mathbb{R}_{\ge 0})$, taking the limit sequentially in the order $v_{1} \to z_{1}, \dots, v_{d} \to z_{d}$. To facilitate the formulation, we choose paths $\gamma_{i}$ tending to each $z_{i}$, such that none of the consecutive products of variables crosses $\mathbb{R}_{\ge 0}$. More precisely, a tuple of paths $\boldsymbol{z} = (\gamma_{1}, \dots, \gamma_{d})$, where each $\gamma_{i}:[0,1] \to \mathbb{C} \setminus (\mathbb{R}_{\ge 0}\setminus\{1\})$ with $\gamma_{i}(1) = z_{i}$,
must be chosen such that, for any $i \in \{1, \dots, d\}$ and any $t \in [0, 1)$, 
\[(\gamma_{1}(1), \dots, \gamma_{i-1}(1), \gamma_{i}(t), \gamma_{i+1}(0), \dots, \gamma_{d}(0))\]
belongs to
\[\mathbb{C}^{d} \setminus \bigcup_{1 \le j \le k \le d} \{ (z_{1},\dots,z_{d}) \in  \mathbb{C}^{d} \mid z_{j,k} \in S_{k} \},\]
where
\[S_{k} = \begin{cases}\mathbb{R}_{\ge 0} \setminus \{1\} &\text{if}\quad k < i, \\ \mathbb{R}_{\ge 0} &\text{if}\quad i \le k.\end{cases}\]
For fixed $\boldsymbol{z} = (z_{1}, \cdots, z_{d}) \in \mathcal{D}^{d}(\mathbb{R}_{\ge 0}\setminus\{1\})$, a tuple of paths $\boldsymbol{\gamma} = (\gamma_{1}, \dots, \gamma_{d})$ tending to $\boldsymbol{z}$ and satisfying the above condition, and $i \in \{1, \dots, d\}$, we define
\[D_{\boldsymbol{z}, \boldsymbol{\gamma}, i} = \{(\gamma_{1}(1), \dots, \gamma_{i-1}(1), \gamma_{i}(t), \gamma_{i+1}(0), \dots, \gamma_{d}(0)) \mid t \in [0,1)\}.\]
Thus, when writing $(\boldsymbol{z}_{1,i-1},\boldsymbol{v}_{i,d}) \in D_{\boldsymbol{z}, \boldsymbol{\gamma}, i}$, only $v_{i}$ is a variable, and the other entries are fixed.

When $z_{m+1,d} = 1$, we consider the values of $\log(-z_{m+1,d})$ to be either $\pi i$ or $-\pi i$, depending on the path $\gamma_{d}$. More precisely, as $z_{m+1,d-1}v_{d} \to z_{m+1,d} = 1$ from above the real axis, $\log(-z_{m+1,d}) = -\pi i$, while from below the real axis, $\log(-z_{m+1,d}) = \pi i$.

Recall that $\Li_{\mathbf{k}}\left(\boldsymbol{z}\right)$ is continuous on $\tilde{\mathcal{D}}^{d}(\mathbb{R}_{> 1})$ except when $(k_{d}, z_{d}) = (1,1)$, in which case it diverges. This implies that both $\Li_{\mathbf{k}}\left(\boldsymbol{z}\right)$ and $\Li_{\mathbf{k}}(1/\boldsymbol{z})$ converge to limits that do not depend on whether  $z_{i,d} \to 1$ from above or below the real axis, except in the case $k_{d}=1$ and $z_{d} \to 1$, in which they diverge. Thus, as $v_{i} \to z_{i}$, both sides of equation \eqref{eq:P=Q} converge except when $k_{i}=1$ and $z_{i} = 1$. When $k_{i}=1$ and $v_{i} \to z_{i} = 1$ $(1 \le i <d)$, the right-hand side of \eqref{eq:P=Q} involves divergent terms. However, the left-hand side converges, so the divergence of the right-hand side must cancel. We will compute these cancellations explicitly using regularization techniques.
\subsection{Regularization}\label{se:reg}
We use an algebraic setup introduced by Hoffmann \cite{Ho}. The contents of this subsection are based on \cite{AK} and \cite{IKZ}. We define the noncommutative polynomial ring $\mathfrak{H} \coloneqq \mathbb{Q}\left<x, y_{z}\, \mid z \in \mathbb{C} \right>$. Let $\mathfrak{H}^{1}$ be the subspace of $\mathfrak{H}$ that does not contain words starting with $x$, and let $\mathfrak{H}^{0}$ be the subspace of $\mathfrak{H}^{1}$ that does not contain words ending with $y_{1}$. For $\mathbf{k} = (k_{1},\dots,k_{d}) \in \mathbb{Z}_{> 0}^{d}$ and $\boldsymbol{z} = (z_{1},\dots,z_{d}) \in \mathbb{C}^{d}$, we define
\[w_{\mathbf{k};\boldsymbol{z}} \coloneqq y_{z_{1}}x^{k_{1}-1}y_{z_{2}}x^{k_{2}-1} \cdots y_{z_{d}}x^{k_{d}-1} \in \mathfrak{H}^{1}\]
and a $\mathbb{Q}$-linear map $\mathcal{I}:\mathfrak{H}^{1} \to \mathfrak{H}^{1}$ by
\[\mathcal{I}(w_{\mathbf{k};\boldsymbol{z}}) = y_{z_{1,d}}x^{k_{1}-1}y_{z_{2,d}}x^{k_{2}-1} \cdots y_{z_{d,d}}x^{k_{d}-1}.\]
Additionary, we define two $\mathbb{Q}$-linear maps $\mathcal{L}^{*}$ and $\mathcal{L}^{\shuffle}$ $:\mathfrak{H}^{0}\to \mathbb{C}$ as follows: 
\[\mathcal{L}^{*} (w_{\mathbf{k};\boldsymbol{z}}) = \Li_{\mathbf{k}}(\boldsymbol{z}) \quad \text{and} \quad \mathcal{L}^{\shuffle} (\mathcal{I}(w_{\mathbf{k};\boldsymbol{z}})) = \Li_{\mathbf{k}}(\boldsymbol{z}).\]
When $\boldsymbol{z} \notin \tilde{\mathcal{D}}^{d}(\mathbb{R}_{> 1})$, the right-hand sides are not well-defined, however, we do not deal with such  $\boldsymbol{z}$.
Two types of products are defined on $\mathfrak{H}$ and its subspaces $\mathfrak{H}^{1}$ and $\mathfrak{H}^{0}$. The first is the stuffle product $*$, which is defined recursively by $w*1=1 * w=w$ and
\begin{align*}
&y_{z_{1}}x^{n_{1}}w_{1} * y_{z_{2}}x^{n_{2}}w_{2} \\
&= y_{z_{1}}x^{n_{1}}(w_{1}* y_{z_{2}}x^{n_{2}}w_{2})+y_{z_{2}}x^{n_{2}}(y_{z_{1}}x^{n_{1}}w_{1} * w_{2}) + y_{z_{1}z_{2}}x^{n_{1}+n_{2}+1}(w_{1} * w_{2})
\end{align*}
for any words $w, w_{1},w_{2}$ and $n_{1}, n_{2} \in \mathbb{Z}_{\ge 0}$, together with $\mathbb{Q}$-bilinearity. The second is the shuffle product $\shuffle$, which is recursively defined by $w\shuffle1=1\shuffle w=w$ and
\[u_{1}w_{1}\shuffle u_{2}w_{2}=u_{1}(w_{1}\shuffle u_{2}w_{2})+u_{2}(u_{1}w_{1}\shuffle w_{2})\]
for any words $w, w_{1},w_{2}$ and letters $u_{1},u_{2}$, together with $\mathbb{Q}$-bilinearity. The maps $\mathcal{L}^{*}$ and $\mathcal{L}^{\shuffle}$ are homomorphisms with respect to the products $*$ and $\shuffle$, respectively, provided that the expressions involved are well-defined. In other words, the following identities hold when both sides consist only of well-defined multiple polylogarithms:
\[\mathcal{L}^{*} (w_{1} * w_{2}) = \mathcal{L}^{*} (w_{1} )\, \mathcal{L}^{*} (w_{2}) \quad \text{and} \quad \mathcal{L}^{\shuffle} (w_{1} \shuffle w_{2}) = \mathcal{L}^{\shuffle} (w_{1} )\, \mathcal{L}^{\shuffle} (w_{2} ).\]
Let $u^{\bullet n}$ with $\bullet \in \{*, \shuffle \}$ denote
\[u^{\bullet n} \coloneqq \underbrace{u \bullet u \bullet \cdots \bullet u}_{n}.\]
The words $w_{\mathbf{k};\boldsymbol{z}}$ and $\mathcal{I}(w_{\mathbf{k};\boldsymbol{z}})$ can be written uniquely in the form
\[w_{\mathbf{k};\boldsymbol{z}} =w_{0} + w_{1} * y_{1}^{* 1} + \cdots + w_{h} * y_{1}^{* h}\]
and
\[\mathcal{I}(w_{\mathbf{k};\boldsymbol{z}}) =w'_{0} + w'_{1} \shuffle y_{1}^{\shuffle 1} + \cdots + w'_{h'}\shuffle y_{1}^{\shuffle h'},\]
where $w_{0},\dots,w_{h},w'_{0},\dots,w'_{h'} \in \mathfrak{H}^{0}$. Note that if $\boldsymbol{z} \in \tilde{\mathcal{D}}^{d}(\mathbb{R}_{>1})$, both $\mathcal{L}^{*} (w_{i})$ and $\mathcal{L}^{\shuffle} (w'_{i})$ are well-defined. When $\mathcal{I}(w_{\mathbf{k};\boldsymbol{z}})$ is written in the form of $wuy_{1}^{h}$ with a word $w$, a letter $u \neq y_{1}$ and non-negative integer $h$, $w'_{i}$ are explicitly written as
\[w'_{i} = \frac{(-1)^{h-i}}{i!}(w\shuffle y_{1}^{h-i})u. \label{eq:w'_{i} =}\]
Otherwise, since $\mathcal{I}(w_{\mathbf{k};\boldsymbol{z}}) = y_{1}^{d}$, we have
\[w'_{i} = \begin{cases}
0 & (i \neq d), \\
\frac{1}{d!} & (i = d).
\end{cases}\]
We define the polynomials $\mathcal{L}^{*}_{\mathbf{k};\boldsymbol{z}}(T)$ and $\mathcal{L}^{\shuffle}_{\mathbf{k};\boldsymbol{z}}(T)$ by
\begin{align*}
\mathcal{L}^{*}_{\mathbf{k};\boldsymbol{z}}(T) &\coloneqq \mathcal{L}^{*} (w_{0})+\mathcal{L}^{*} (w_{1})T + \cdots + \mathcal{L}^{*} (w_{h})T^{h},\\
\mathcal{L}^{\shuffle}_{\mathbf{k};\boldsymbol{z}}(T) &\coloneqq \mathcal{L}^{\shuffle} (w'_{0})+\mathcal{L}^{\shuffle} (w'_{1})T + \cdots + \mathcal{L}^{\shuffle} (w'_{h'})T^{h'},
\end{align*}
respectively. The two polynomials $\mathcal{L}^{*}_{\mathbf{k};\boldsymbol{z}}(T)$ and $\mathcal{L}^{\shuffle}_{\mathbf{k};\boldsymbol{z}}(T)$ are related by
\[\mathcal{L}^{\shuffle}_{\mathbf{k};\boldsymbol{z}}(T) = \rho \circ \mathcal{L}^{*}_{\mathbf{k};\boldsymbol{z}}(T),\]
where $\rho$: $\mathbb{C}[T] \to \mathbb{C}[T]$ is an invertible $\mathbb{C}$-linear map defined by the generating function:
\[\rho(e^{Tu}) = \Gamma(1+u)e^{(T+\gamma)u}.\]
Here, $\Gamma(s)$ denotes the Gamma function, and $\gamma$ is the Euler constant. For $\boldsymbol{z} \in \tilde{\mathcal{D}}^{d}(\mathbb{R}_{>1})$, where $(z_{d},k_{d}) = (1,1)$ is also allowed, we define
\[\Li^{*}_{\mathbf{k}}\left(\boldsymbol{z}\right) \coloneqq  \mathcal{L}^{*}_{\mathbf{k};\boldsymbol{z}}(0) \quad \text{and} \quad \Li^{\shuffle}_{\mathbf{k}}\left(\boldsymbol{z}\right) \coloneqq \mathcal{L}^{\shuffle}_{\mathbf{k};\boldsymbol{z}}(0).\]
These values are called stuffle regularized values and shuffle regularized values, respectively. Let $h$ be a non-negative integer such that $\mathbf{k}_{d-h+1, d} = \boldsymbol{z}_{d-h+1, d} = (\{1\}^{h})$ and $(k_{d-h}, z_{d-h}) \neq (1,1)$. Then, we have
\begin{equation}
\mathcal{L}^{\shuffle}_{\mathbf{k};\boldsymbol{z}}(T) = \sum_{m=0}^{h} \Li^{\shuffle}_{\mathbf{k}_{1,d-m}}\left(\boldsymbol{z}_{1,d-m}\right) \frac{T^{m}}{m!}. \label{eq:shufflepolynomial}
\end{equation}
We will also use the following notation later:
\[\mathcal{L}^{\star, \shuffle}_{\mathbf{k};\boldsymbol{z}}(T) \coloneqq \sum_{(\mathbf{k}';\boldsymbol{z}') \preceq (\mathbf{k};\boldsymbol{z})} \mathcal{L}^{\shuffle}_{\mathbf{k}';\boldsymbol{z}'}(T).\]

\subsection{Asymptotic behavior as $v_{i} \to z_{i}$}

For $\mathbf{k} \in \mathbb{Z}_{>0}^{d}$ and $(\boldsymbol{z}_{1,d-1}, {v}_{d}) \in D_{\boldsymbol{z}, \boldsymbol{\gamma}, d}$, the word $\mathcal{I}(w_{\mathbf{k}; \boldsymbol{z}_{1,d-1}, {v}_{d}})$ can be written uniquely in the form
\[\mathcal{I}(w_{\mathbf{k}; \boldsymbol{z}_{1,d-1}, {v}_{d}}) = w_{0} + w_{1} \shuffle y_{v_{d}}^{\shuffle 1} + \cdots + w_{h}\shuffle y_{v_{d}}^{\shuffle h},\]
where none of $w_{0}, \dots, w_{h}$ contains a word ending with $y_{v_{d}}$ or $y_{1}$. Using $w_{0}, \dots, w_{h}$ from above, we define
\[\mathcal{L}^{+}_{\mathbf{k}; \boldsymbol{z}_{1,d-1}, {v}_{d}}(T) \coloneqq 
\begin{cases}\mathcal{L}^{\shuffle} (w_{0})+\mathcal{L}^{\shuffle} (w_{1})T + \cdots + \mathcal{L}^{\shuffle} (w_{h})T^{h} & \mathrm{if}\ (k_{d}, z_{d}) = (1, 1),\\
\Li_{\mathbf{k}}(\boldsymbol{z}_{1,d-1}, {v}_{d}) & \mathrm{if}\ z_{d} \neq 1.
\end{cases}\]
In addition, we define
\begin{align*}
\mathcal{L}^{-}_{\mathbf{k};\boldsymbol{z}_{1,d-1}, {v}_{d}}(T) &\coloneqq \mathcal{L}^{+}_{\mathbf{k};\boldsymbol{z}_{1,d-1}, {v}_{d}}(T-\log(-v_{d})),\\
\mathcal{L}^{\star, +}_{\mathbf{k};\boldsymbol{z}_{1,d-1}, {v}_{d}}(T) &\coloneqq \sum_{(\mathbf{k}';\boldsymbol{z}') \preceq (\mathbf{k};\boldsymbol{z}_{1,d-1}, {v}_{d})}\mathcal{L}^{+}_{\mathbf{k}';\boldsymbol{z}'}(T),\\
\mathcal{L}^{\star, -}_{\mathbf{k};\boldsymbol{z}_{1,d-1}, {v}_{d}}(T) &\coloneqq \sum_{(\mathbf{k}';\boldsymbol{z}') \preceq (\mathbf{k};\boldsymbol{z}_{1,d-1}, {v}_{d})}\mathcal{L}^{-}_{\mathbf{k}';\boldsymbol{z}'}(T).
\end{align*}
Then, it follows that
\begin{align*}
\Li_{\mathbf{k}}(\boldsymbol{z}_{1,d-1}, {v}_{d}) &= \mathcal{L}^{+}_{\mathbf{k}; \boldsymbol{z}_{1,d-1}, {v}_{d}}(-\log(1-v_{d})) = \mathcal{L}^{-}_{\mathbf{k};\boldsymbol{z}_{1,d-1}, {v}_{d}}(-\log(1-1/v_{d})),\\
\Li^{\star}_{\mathbf{k}}(\boldsymbol{z}_{1,d-1}, {v}_{d}) &= \mathcal{L}^{\star, +}_{\mathbf{k};\boldsymbol{z}_{1,d-1}, {v}_{d}}(-\log(1-v_{d})) = \mathcal{L}^{\star, -}_{\mathbf{k};\boldsymbol{z}_{1,d-1}, {v}_{d}}(-\log(1-1/v_{d})).
\end{align*}
It is clear from the definition that each coefficient of $\mathcal{L}^{+}_{\mathbf{k}; \boldsymbol{z}_{1,d-1}, {v}_{d}}(T)$ converges to the corresponding coefficient of $\mathcal{L}^{\shuffle}_{\mathbf{k};\boldsymbol{z}}(T)$ as $v_{d} \to z_{d}$.  The order of convergence is as follows.
\begin{lemma}\label{le:polynomialz}
Let $\mathrm{coeff}_{T^{n}}P(T)$ denote the coefficient of $T^{n}$ in the polynomial $P(T)$.
For $(\boldsymbol{z}_{1,d-1}, {v}_{d}) \in D_{\boldsymbol{z}, \boldsymbol{\gamma}, d}$, we have
\[\mathrm{coeff}_{T^{n}} \mathcal{L}^{+}_{\mathbf{k};\boldsymbol{z}_{1,d-1}, {v}_{d}}(T) = \mathrm{coeff}_{T^{n}} \mathcal{L}^{\shuffle}_{\mathbf{k};\boldsymbol{z}}(T) + O\left((z_{d}-v_{d})\log^{d-n}(z_{d}-v_{d})\right)\]
as $v_{d} \to z_{d}$ along the path $\gamma_{d}$.%, where the implicit constant depends on $\boldsymbol{z}$.
\end{lemma}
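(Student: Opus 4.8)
The plan is to reduce the statement, by means of the explicit description of a shuffle decomposition recalled above, to a single estimate comparing two multiple polylogarithms that differ only in their last argument, and to establish that estimate (together with a companion bound for the divergent case) by induction on the weight, using the differential equation of $\Li$ in its last variable.

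First I would carry out the reduction. We may assume $1\le h\le d-1$, where $h$ denotes the number of trailing letters $y_{v_{d}}$ in $\mathcal{I}(w_{\mathbf{k};\boldsymbol{z}_{1,d-1},v_{d}})$, equivalently the length of the trailing run of pairs equal to $(1,1)$ in $(\mathbf{k};\boldsymbol{z})$: when $h=0$ the two polynomials reduce to the constants $\Li_{\mathbf{k}}(\boldsymbol{z}_{1,d-1},v_{d})$ and $\Li_{\mathbf{k}}(\boldsymbol{z})$ and the statement is the estimate below with $d'=d$, and when $h=d$ both polynomials equal $T^{d}/d!$ so the difference vanishes. For $1\le h\le d-1$, write $\mathcal{I}(w_{\mathbf{k};\boldsymbol{z}_{1,d-1},v_{d}})=\tilde{w}\,u\,y_{v_{d}}^{h}$ with a letter $u\ne y_{v_{d}}$; letting $v_{d}\to z_{d}$ in every subscript turns this into $\mathcal{I}(w_{\mathbf{k};\boldsymbol{z}})=\tilde{w}^{\circ}u^{\circ}y_{1}^{h}$ with the same $h$, and since $h$ is maximal and $z_{d}=1$ one has $z_{d-h}\ne1$, hence $u^{\circ}\ne y_{1}$. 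Using the explicit formula for the pieces of a shuffle decomposition, once with the special letter $y_{v_{d}}$ and once with $y_{1}$,
\[\mathrm{coeff}_{T^{n}}\mathcal{L}^{+}_{\mathbf{k};\boldsymbol{z}_{1,d-1},v_{d}}(T)-\mathrm{coeff}_{T^{n}}\mathcal{L}^{\shuffle}_{\mathbf{k};\boldsymbol{z}}(T)=\tfrac{(-1)^{h-n}}{n!}\big(\mathcal{L}^{\shuffle}((\tilde{w}\shuffle y_{v_{d}}^{\shuffle(h-n)})u)-\mathcal{L}^{\shuffle}((\tilde{w}^{\circ}\shuffle y_{1}^{\shuffle(h-n)})u^{\circ})\big).\]
Expanding the shuffle products into words, the right-hand side becomes a $\mathbb{Z}$-linear combination, indexed by the same set of shuffles, of differences $\pm(\Li_{\mathbf{k}'}(\boldsymbol{z}'(v_{d}))-\Li_{\mathbf{k}'}(\boldsymbol{z}'(z_{d})))$, in which each contributing word has exactly $d-n$ letters $y$, ends in $u$ (so the word and its limit both lie in $\mathfrak{H}^{0}$), and — the observation that makes the argument work — all its $y$-subscripts are of the form $\alpha v_{d}$ with $\alpha$ constant, so that in $\boldsymbol{z}'(v_{d})$ only the last entry depends on $v_{d}$; in particular $\mathrm{dep}(\mathbf{k}')=d-n$, and writing $\alpha$ for the coefficient of $v_{d}$ in the last entry of $\boldsymbol{z}'$, one has $(k'_{d-n},\alpha z_{d})\ne(1,1)$.

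It thus remains to prove: for $\mathbf{k}'\in\mathbb{Z}_{>0}^{d'}$ and $\boldsymbol{z}'(v)=(c_{1},\dots,c_{d'-1},\alpha v)$ with constants $c_{i},\alpha$ for which the path conditions hold along $\gamma_{d}$, (a) if $(k'_{d'},\alpha z_{d})\ne(1,1)$ then $\Li_{\mathbf{k}'}(\boldsymbol{z}'(v))$ extends continuously to $v=z_{d}$ and $\Li_{\mathbf{k}'}(\boldsymbol{z}'(v))-\Li_{\mathbf{k}'}(\boldsymbol{z}'(z_{d}))=O((z_{d}-v)\log^{d'}(z_{d}-v))$, and (b) if $(k'_{d'},\alpha z_{d})=(1,1)$ then $\Li_{\mathbf{k}'}(\boldsymbol{z}'(v))=O(\log^{d'}(z_{d}-v))$, both as $v\to z_{d}$ along $\gamma_{d}$. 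I would prove (a) and (b) together by induction on $|\mathbf{k}'|$, integrating the $v$-derivative along $\gamma_{d}$. Differentiating the series definition of $\Li$ yields $\partial_{v}\Li_{\mathbf{k}'}(\boldsymbol{z}'(v))=v^{-1}\Li_{(k'_{1},\dots,k'_{d'}-1)}(\boldsymbol{z}'(v))$ when $k'_{d'}>1$, and $\partial_{v}\Li_{\mathbf{k}'}(\boldsymbol{z}'(v))=\frac{\alpha}{1-\alpha v}\Li_{(k'_{1},\dots,k'_{d'-1})}(c_{1},\dots,c_{d'-2},c_{d'-1}\alpha v)$ when $k'_{d'}=1$; in each case the polylogarithm appearing has strictly smaller weight and is again of the special shape "only the last argument varies". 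Applying (a) or (b) of the induction hypothesis to that polylogarithm — and observing that the prefactor $\alpha/(1-\alpha v)$ is bounded near $v=z_{d}$ in all the cases relevant to (a), whereas in (b) it is $O((z_{d}-v)^{-1})$ because $\alpha z_{d}=1$ — gives $\partial_{v}\Li_{\mathbf{k}'}(\boldsymbol{z}'(v))=O(\log^{p}(z_{d}-v))$ or $O((z_{d}-v)^{-1}\log^{p}(z_{d}-v))$ with $p$ no larger than the depth of the reduced polylogarithm; integration via $\int_{0}^{r}\log^{p}s\,ds=O(r\log^{p}r)$ respectively $\int_{0}^{r}s^{-1}\log^{p}s\,ds=O(\log^{p+1}r)$ then yields precisely the claimed bounds, the continuity asserted in (a) being the continuity of $\Li$ on $\tilde{\mathcal{D}}^{d'}(\mathbb{R}_{>1})$ except where the last index and the last argument are both $1$, recalled above. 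The base cases $d'=1$ concern $\Li_{k}(\alpha v)$ and are immediate.

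I expect part (b) — the control of the divergent intermediate polylogarithms — to be the main obstacle: one must organise the induction so that the power of the logarithm that appears never exceeds the current depth $d'$, and one must keep all the $O$-estimates uniform along $\gamma_{d}$. The uniformity is harmless once $\gamma_{d}$ is chosen, near $z_{d}$, to approach $z_{d}$ along a straight segment — which is consistent both with the path conditions and with the convention $\log(-z_{m+1,d})=\pm\pi i$ — so that $|\gamma_{d}(s)-z_{d}|$ is comparable to a real parameter and the one-variable integral estimates apply as stated. Everything else — the shuffle bookkeeping, the observation that only the last argument of each $\boldsymbol{z}'$ varies, and the termwise summation of finitely many $O$-terms — is routine.
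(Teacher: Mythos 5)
Your proposal is correct and follows essentially the same route as the paper: both use the explicit decomposition $w'_{i}=\frac{(-1)^{h-i}}{i!}(w\shuffle y^{h-i})u$ to reduce the claim to estimates for multiple polylogarithms as the last argument tends to its limit (with the two cases $(k_{d},z_{d})=(1,1)$ and $\neq(1,1)$ handled exactly as your (b) and (a)), and both prove those estimates by induction on the weight via the derivative in the last variable together with the bounds $\int_{0}^{r}\log^{p}s\,ds=O(r\log^{p}r)$ and $\int^{r}s^{-1}\log^{p}s\,ds=O(\log^{p+1}r)$. The only organizational difference is that you expand $(\tilde{w}\shuffle y_{v_{d}}^{h-n})u$ into individual words, each a depth-$(d-n)$ polylogarithm varying only in its last argument, whereas the paper keeps it as a single iterated integral $\int_{0}^{v_{d}}\Li_{\cdots}(\cdots t)\,(\Li_{1}(t))^{h-n}\,\omega$ and bounds the factor $(\Li_{1}(t))^{h-n}$ by $O(\log^{h-n}(1-t))$ directly; this does not change the substance of the argument.
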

\begin{proof}
We divide the proof into two cases.

\textbf{Case $(k_{d}, z_{d}) = (1, 1)$.} We use induction on the weight of $\mathbf{k}$. We first note that, when $\mathbf{k} = \boldsymbol{z} = (\{1\}^{d})$,
\begin{equation}\mathcal{L}^{+}_{\mathbf{k};\boldsymbol{z}_{1,d-1}, {v}_{d}}(T) = \mathcal{L}^{\shuffle}_{\mathbf{k};\boldsymbol{z}}(T) = \frac{1}{d!}T^{d}. \label{eq:1/dT^d}
\end{equation}

When $|\mathbf{k}| = 1$, the assertion is obvious from equation \eqref{eq:1/dT^d} with $d=1$. We prove the assertion for $\mathbf{k}$ with $k_{d} = 1$ by assuming it holds for all indices of weight less than $|\mathbf{k}|$ and with last entry $1$. If $\mathbf{k} = \boldsymbol{z} = (1, \dots, 1)$, then the assertion is obvious from equation \eqref{eq:1/dT^d}. Otherwise, there exist a positive integer $h$, a word $w$ and a letter $u \notin \{y_{1}, y_{v_{d}}\}$ such that $\mathcal{I}(w_{\mathbf{k};\boldsymbol{z}_{1,d-1}, {v}_{d}}) = wuy_{v_{d}}^{h}$. All that remains is to prove that
\[\mathcal{L}^{\shuffle}\left( (w\shuffle y_{v_{d}}^{h-n})u\right) - \mathcal{L}^{\shuffle}\left( (w\shuffle y_{v_{d}}^{h-n})u\big|_{v_{d} = 1}\right) = O\left((1-v_{d})\log^{d-n}(1-v_{d})\right)\]
for all $ n \in \{0, \dots, h\}$. Note that there are two cases:
\begin{itemize}
  \item $u = x$, i.e., $\mathbf{k}_{d-h+1, d} = \boldsymbol{z}_{d-h+1, d} = (\{1\}^{h})$ and $k_{d-h} \neq 1$,
  \item $u = y_{z_{d-h}v_{d}}$, i.e., $\mathbf{k}_{d-h+1, d} = \boldsymbol{z}_{d-h+1, d} = (\{1\}^{h})$, $k_{d-h} = 1$ and $z_{d-h} \neq 1$.
\end{itemize}
In each case, we obtain
\begin{align*}
&\mathcal{L}^{\shuffle}\left( (w\shuffle y_{v_{d}}^{h-n})u\right)\\
&=\begin{dcases}
\int_{0}^{v_{d}} \Li_{\mathbf{k}_{1,d-h-1},k_{d-h}-1}(\boldsymbol{z}_{1,d-h-1},z_{d-h}t) \frac{\left(\Li_{1}(t)\right)^{h-n}dt}{t} & (u=x),\\
\int_{0}^{v_{d}} \Li_{\mathbf{k}_{1,d-h-1}}(\boldsymbol{z}_{1,d-h-2},z_{d-h-1,d-h}t) \frac{\left(\Li_{1}(t)\right)^{h-n}dt}{1/z_{d-h}-t} & (u = y_{z_{d-h}v_{d}}),
\end{dcases}
\end{align*}
where the integration is taken along the straight path from $0$ to $v_{d}$. Therefore, we have
\begin{align*}
&\mathcal{L}^{\shuffle}\left( (w\shuffle y_{v_{d}}^{h-n})u\right) - \mathcal{L}^{\shuffle}\left( (w\shuffle y_{v_{d}}^{h-n})u\big|_{v_{d} = 1}\right)\\
&=\begin{dcases}
\int_{1}^{v_{d}} \Li_{\mathbf{k}_{1,d-h-1},k_{d-h}-1}(\boldsymbol{z}_{1,d-h-1},z_{d-h}t) \frac{\left(\Li_{1}(t)\right)^{h-n}dt}{t} & (u=x),\\
\int_{1}^{v_{d}} \Li_{\mathbf{k}_{1,d-h-1}}(\boldsymbol{z}_{1,d-h-2},z_{d-h-1,d-h}t) \frac{\left(\Li_{1}(t)\right)^{h-n}dt}{1/z_{d-h}-t} & (u = y_{z_{d-h}v_{d}}).
\end{dcases}
\end{align*}
Note that the path obtained by concatenating the straight path from $1$ to $0$ and the straight path from $0$ to $v_{d}$ can be deformed into the straight path from $1$ to $v_{d}$.
By the induction hypothesis, if $(k_{d-h}-1,z_{d-h}) = (1,1)$, then
\begin{align*}
\Li_{\mathbf{k}_{1,d-h-1},k_{d-h}-1}(\boldsymbol{z}_{1,d-h-1},z_{d-h}t) &= \mathcal{L}^{+}_{\mathbf{k}_{1,d-h-1},1; \boldsymbol{z}_{1,d-h-1},t}(-\log(1-t)) \\
&= O\left(\log^{d-h}(1-t)\right),
\end{align*}
and otherwise it is $O(1)$ as $t \to 1$. Similarly, if $z_{d-h} \neq 1$ and $(k_{d-h-1},z_{d-h-1,d-h}) = (1,1)$, then
\[\Li_{\mathbf{k}_{1,d-h-1}}(\boldsymbol{z}_{1,d-h-2},z_{d-h-1,d-h}t) = O\left(\log^{d-h-1}(1-t)\right),\]
and otherwise it is $O(1)$ as $t \to 1$. Therefore, by the change of variable $t = 1-(1-v_{d})t$, we obtain
\begin{align*}
&\mathcal{L}^{\shuffle}\left( (w\shuffle y_{v_{d}}^{h-n})u\right) - \mathcal{L}^{\shuffle}\left( (w\shuffle y_{v_{d}}^{h-n})u\big|_{v_{d} = 1}\right)\\
&\ll |1-v_{d}|\int_{0}^{1} \left|-\log((1-v_{d})t)\right|^{d-n}dt\\
&\ll \left|(1-v_{d})\log^{d-n}(1-v_{d})\right|.
\end{align*}

\textbf{Case $(k_{d},z_{d}) \neq (1,1)$.} In this case, it follows that
\[\mathcal{L}^{+}_{\mathbf{k};\boldsymbol{z}_{1,d-1}, {v}_{d}}(T) = \Li_{\mathbf{k}}(\boldsymbol{z}_{1,d-1},{v}_{d}) \quad \mathrm{and} \quad \mathcal{L}^{\shuffle}_{\mathbf{k};\boldsymbol{z}}(T) = \Li_{\mathbf{k}}(\boldsymbol{z}).\]
Therefore, we only need to prove that
\begin{align}
\Li_{\mathbf{k}}(\boldsymbol{z}_{1,d-1},{v}_{d}) - \Li_{\mathbf{k}}(\boldsymbol{z}) = O\left((z_{d}-v_{d})\log^{d}(z_{d}-v_{d})\right). \label{eq:v_{d}-z_{d}}
\end{align}
We obtain
\[\Li_{\mathbf{k}}(\boldsymbol{z}_{1,d-1},{v}_{d}) - \Li_{\mathbf{k}}(\boldsymbol{z}) = \begin{dcases}
\int_{z_{d}}^{v_{d}} \Li_{\mathbf{k}_{1,d-1},k_{d}-1}(\boldsymbol{z}_{1,d-1},t)\frac{dt}{t} & (k_{d} \neq 1),\\
\int_{z_{d}}^{v_{d}} \Li_{\mathbf{k}_{1,d-1}}(\boldsymbol{z}_{1,d-2},z_{d-1}t)\frac{dt}{1-t} & (k_{d} = 1).
\end{dcases}\]
By the result for the case $(k_{d}, z_{d}) = (1, 1)$, if $(k_{d}-1,z_{d}) = (1,1)$, then 
\[\Li_{\mathbf{k}_{1,d-1},k_{d}-1}(\boldsymbol{z}_{1,d-1},t) = O\left(\log^{d}(1-t)\right),\]
and if $(k_{d-1},z_{d-1,d}) = (1,1)$, then
\[\Li_{\mathbf{k}_{1,d-1}}(\boldsymbol{z}_{1,d-2},z_{d-1}t) = O\left(\log^{d-1}(1-z_{d-1}t)\right)\]
as $t \to z_{d}$. In all other cases, the integrands are $O(1)$. Therefore, by the change of variable $t = z_{d}-(z_{d}-v_{d})t$, we obtain equation \eqref{eq:v_{d}-z_{d}}.
\end{proof}
Let $(\boldsymbol{z}_{1,i-1},\boldsymbol{v}_{i,d}) \in D_{\boldsymbol{z}, \boldsymbol{\gamma}, i}$ with $z_{i} = 1$. We write $f(v_{i}) \sim g(v_{i})$ if $f(v_{i}) - g(v_{i}) = O((1-v_{i})^{1-\varepsilon})$ as $v_{i} \to z_{i} = 1$ along the path $\gamma_{i}$. In addition, for two polynomials in the indeterminate $T$,
\[P_{v_{i}}(T) = \sum_{n=0}^{N}f_{n}(v_{i}) T^{n}\quad \mathrm{and}\quad Q_{v_{i}}(T) = \sum_{n=0}^{N}g_{n}(v_{i}) T^{n},\]
we write $P_{v_{i}}(T) \sim Q_{v_{i}}(T)$ if $f_{n}(v_{i}) \sim g_{n}(v_{i})$ for all $n$. Note that if $f(v_{i}) \sim g(v_{i})$ holds even along the path $1/\gamma_{i} \coloneqq \{1/ \gamma_{i}(t) \mid t \in [0,1]\}$, then $f(1/v_{i}) \sim g(1/v_{i})$, since $O((1-1/v_{i})^{1-\varepsilon}) = O((1-v_{i})^{1-\varepsilon})$. Additionary, if polynomials $P_{v_{i}}$ and $Q_{v_{i}}$ satisfy either
\[P_{v_{i}}(-\log(1-v_{i})) = Q_{v_{i}}(-\log(1-v_{i}))\] 
or
\[P_{v_{i}}(-\log(1-1/v_{i})) = Q_{v_{i}}(-\log(1-1/v_{i})),\]
and if there exist polynomials $P(T)$ and $Q(T)$ with coefficients independent of $v_{i}$ such that $P_{v_{i}}(T) \sim P(T)$ and $Q_{v_{i}}(T) \sim Q(T)$, then it follows that $P_{v_{i}}(T) \sim Q_{v_{i}}(T)$. This can be verified from
\[-\log(1-v_{i})\, O\left((1-v_{i})^{1-\varepsilon}\right) = O\left((1-v_{i})^{1-\varepsilon}\right).\]
Therefore, in particular, we have
\[\lim_{v_{i} \to 1}P_{v_{i}}(0) = \lim_{v_{i} \to 1}Q_{v_{i}}(0).\]

The following lemma is a consequence of Lemma \ref{le:polynomialz}.
\begin{lemma} For $(\boldsymbol{z}_{1,d-1}, {v}_{d}) \in D_{\boldsymbol{z}, \boldsymbol{\gamma}, d}$ with $z_{d} = 1$, we have
\begin{align}
\mathcal{L}^{+}_{\mathbf{k};\boldsymbol{z}_{1,d-1}, {v}_{d}}(T) &\sim \mathcal{L}^{\shuffle}_{\mathbf{k};\boldsymbol{z}}(T), \label{eq:L+}\\
\mathcal{L}^{-}_{\mathbf{k};\boldsymbol{z}_{1,d-1}, {v}_{d}}(T) &\sim \mathcal{L}^{\shuffle}_{\mathbf{k};\boldsymbol{z}}(T-\log(-z_{d})), \label{eq:L-}\\
\mathcal{L}^{\star, +}_{\mathbf{k};\boldsymbol{z}_{1,d-1}, {v}_{d}}(T) &\sim \mathcal{L}^{\star, \shuffle}_{\mathbf{k};\boldsymbol{z}}(T), \label{eq:Ls+}\\
\mathcal{L}^{\star, -}_{\mathbf{k};\boldsymbol{z}_{1,d-1}, {v}_{d}}(T) &\sim \mathcal{L}^{\star, \shuffle}_{\mathbf{k};\boldsymbol{z}}(T-\log(-z_{d})). \label{eq:Ls-}
\end{align}
\end{lemma}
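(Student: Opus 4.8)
The plan is to obtain all four asymptotics from Lemma~\ref{le:polynomialz}, using throughout that for any fixed $m\ge 0$ one has $(1-v_d)\log^m(1-v_d)=O\big((1-v_d)^{1-\varepsilon}\big)$, so that terms of this shape are $\sim$-negligible. Relation \eqref{eq:L+} is then essentially immediate: the polynomial $\mathcal{L}^{+}_{\mathbf{k};\boldsymbol{z}_{1,d-1},v_d}(T)$ has degree at most $d$ (it is built from the decomposition of $\mathcal{I}(w_{\mathbf{k};\boldsymbol{z}_{1,d-1},v_d})$, a word with $d$ letters of type $y$), so every $n$ with a possibly nonzero $T^{n}$-coefficient satisfies $d-n\ge 0$; since $z_d=1$, Lemma~\ref{le:polynomialz} gives $\mathrm{coeff}_{T^{n}}\mathcal{L}^{+}_{\mathbf{k};\boldsymbol{z}_{1,d-1},v_d}(T)-\mathrm{coeff}_{T^{n}}\mathcal{L}^{\shuffle}_{\mathbf{k};\boldsymbol{z}}(T)=O\big((1-v_d)\log^{d-n}(1-v_d)\big)=O\big((1-v_d)^{1-\varepsilon}\big)$, which is \eqref{eq:L+}.

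For \eqref{eq:L-} I would write $\mathcal{L}^{-}_{\mathbf{k};\boldsymbol{z}_{1,d-1},v_d}(T)=\mathcal{L}^{+}_{\mathbf{k};\boldsymbol{z}_{1,d-1},v_d}(T-\log(-v_d))$ and compare coefficientwise with $\mathcal{L}^{\shuffle}_{\mathbf{k};\boldsymbol{z}}(T-\log(-z_d))$. The $T^{j}$-coefficient of each side is a fixed polynomial expression, coming from the binomial expansion of $(T-a)^{n}$, in the coefficients of $\mathcal{L}^{+}$ (resp.\ $\mathcal{L}^{\shuffle}$) and in $-\log(-v_d)$ (resp.\ $-\log(-z_d)$). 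As $v_d\to 1$ along $\gamma_d$ the quantity $\log(-v_d)$ stays bounded and satisfies $\log(-v_d)-\log(-z_d)=O(1-v_d)$, because $\log$ is smooth off its cut; combining this with \eqref{eq:L+} and the elementary bound $|x^{n}-y^{n}|\le n\max(|x|,|y|)^{n-1}|x-y|$ shows that each $T^{j}$-coefficient of $\mathcal{L}^{-}$ agrees, up to $O\big((1-v_d)^{1-\varepsilon}\big)$, with that of $\mathcal{L}^{\shuffle}_{\mathbf{k};\boldsymbol{z}}(T-\log(-z_d))$, which is \eqref{eq:L-}. This isolates a \emph{shift principle} that I will reuse: a fixed polynomial map applied to $\sim$-convergent coefficient data together with a $\sim$-convergent argument-shift produces $\sim$-convergent output, provided all degrees stay bounded.

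For \eqref{eq:Ls+} and \eqref{eq:Ls-} I would expand the starred polynomials over the $2^{d-1}$ merging patterns $\pi$ of the positions $1,\dots,d-1$: each $\pi$ produces a pair $(\mathbf{k}^{\pi};\boldsymbol{z}^{\pi}(v_d))\preceq(\mathbf{k};\boldsymbol{z}_{1,d-1},v_d)$ in which only the last entry of $\boldsymbol{z}^{\pi}(v_d)$ depends on $v_d$---it is the product $z_{j,d-1}v_d$ over the final block (with $j$ its starting position), hence tends to $z_{j,d-1}=z_{j,d}$---and $\boldsymbol{z}^{\pi}(1)$ runs exactly over the contractions of $\boldsymbol{z}$ summed in $\mathcal{L}^{\star,\shuffle}_{\mathbf{k};\boldsymbol{z}}$. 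For \eqref{eq:Ls+} one applies Lemma~\ref{le:polynomialz} to each $(\mathbf{k}^{\pi};\boldsymbol{z}^{\pi}(1))$, with the role of the variable played by the last entry $z_{j,d-1}v_d$; since $z_{j,d-1}$ is a fixed nonzero constant the error is $O\big((z_{j,d-1}-z_{j,d-1}v_d)\log^{\bullet}(z_{j,d-1}-z_{j,d-1}v_d)\big)=O\big((1-v_d)^{1-\varepsilon}\big)$, and summation over $\pi$ concludes. For \eqref{eq:Ls-} the only new point is that the shift in $\mathcal{L}^{-}_{\mathbf{k}^{\pi};\boldsymbol{z}^{\pi}(v_d)}(T)=\mathcal{L}^{+}_{\mathbf{k}^{\pi};\boldsymbol{z}^{\pi}(v_d)}(T-\log(-z_{j,d-1}v_d))$ depends on $\pi$; however $\mathcal{L}^{+}_{\mathbf{k}^{\pi};\boldsymbol{z}^{\pi}(v_d)}$ is a non-constant polynomial only when its last entry tends to $1$, in which case $z_{j,d-1}=z_{j,d}=1$ forces that last entry to equal $v_d$, whereas for all other $\pi$ both $\mathcal{L}^{+}_{\mathbf{k}^{\pi};\boldsymbol{z}^{\pi}(v_d)}$ and $\mathcal{L}^{\shuffle}_{\mathbf{k}^{\pi};\boldsymbol{z}^{\pi}(1)}$ are $T$-independent and the shift is irrelevant. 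Thus every term is comparable, by \eqref{eq:L+} and the shift principle, with $\mathcal{L}^{\shuffle}_{\mathbf{k}^{\pi};\boldsymbol{z}^{\pi}(1)}(T-\log(-z_d))$, and summation over $\pi$ gives \eqref{eq:Ls-}.

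The main obstacle is not analytic---Lemma~\ref{le:polynomialz} already supplies the required convergence rate---but the combinatorial bookkeeping behind \eqref{eq:Ls-}: one must see that the a priori $\pi$-dependent shifts $\log(-z_{j,d-1}v_d)$ can be replaced uniformly by $\log(-v_d)$, hence by $\log(-z_d)$ in the limit, which relies precisely on the dichotomy that a contraction contributes a non-constant polynomial exactly when the extra factors in its final block multiply to $1$, forcing that block to equal $v_d$, together with the identification of $\{\boldsymbol{z}^{\pi}(1)\}_{\pi}$ with the contractions of $\boldsymbol{z}$ appearing in $\mathcal{L}^{\star,\shuffle}_{\mathbf{k};\boldsymbol{z}}$. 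Once these facts are in place, the four estimates follow by finite summation.
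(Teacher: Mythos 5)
Your proof is correct and follows essentially the same route as the paper's (which disposes of the lemma in three sentences): Lemma \ref{le:polynomialz} for \eqref{eq:L+}, the estimate $\log(-v_{d}) = \log(-z_{d}) + O(z_{d}-v_{d})$ for \eqref{eq:L-}, and termwise comparison over contractions using $O\left((z_{h,d}-z_{h,d-1}v_{d})^{1-\varepsilon}\right) = O\left((z_{d}-v_{d})^{1-\varepsilon}\right)$ for \eqref{eq:Ls+} and \eqref{eq:Ls-}. Your explicit handling of the contraction-dependent shifts in \eqref{eq:Ls-} (non-constant in $T$ only when the final block is the singleton $v_{d}$, so all relevant shifts are literally $-\log(-v_{d})$) supplies a detail the paper leaves implicit; the only caveat is that $-z_{d}=-1$ lies on the branch cut, so ``$\log$ is smooth off its cut'' should be replaced by the paper's convention that $\log(-z_{d})$ means $\pm\pi i$ according to the side from which $\gamma_{d}$ approaches $1$, under which your estimate $\log(-v_{d})-\log(-z_{d})=O(1-v_{d})$ is exactly right.
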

\begin{proof}
Equation \eqref{eq:L+} follows immediately from Lemma \ref{le:polynomialz}. Equation \eqref{eq:L-} can be verified using
\[\log(-{v}_{d}) = \log(-{z}_{d}) + O(z_{d}-v_{d}).\]
Equations \eqref{eq:Ls+} and \eqref{eq:Ls-} follow from \eqref{eq:v_{d}-z_{d}} and 
\[O\left((z_{h,d}-z_{h,d-1}v_{d})^{1-\varepsilon}\right) = O\left((z_{d} - v_{d})^{1-\varepsilon}\right)\]
as $v_{d} \to z_{d}$ for any $1 \le h \le d$.
\end{proof}
\begin{lemma}\label{le:sim} For $(\boldsymbol{z}_{1,i-1},\boldsymbol{v}_{i,d}) \in D_{\boldsymbol{z}, \boldsymbol{\gamma}, i}$ with $z_{i} = 1$, we have
\begin{align}
\mathcal{B}_{l}({z}_{1,i-1}{v}_{i,d}) &\sim \mathcal{B}_{l}({z}_{1,i}{v}_{i+1,d}), \label{eq:B(v)}\\
\Li_{\mathbf{k}}(\boldsymbol{z}_{1,i-1},\boldsymbol{v}_{i,d}) &\sim \Li_{\mathbf{k}}(\boldsymbol{z}_{1,i},\boldsymbol{v}_{i+1,d}) \quad (i < d\ \mathrm{or}\ k_{d} \neq 1), \label{eq:Li(v)}\\
\Li^{\star}_{\mathbf{k}}(\boldsymbol{z}_{1,i-1},\boldsymbol{v}_{i,d}) &\sim \Li^{\star}_{\mathbf{k}}(\boldsymbol{z}_{1,i},\boldsymbol{v}_{i+1,d}) \quad (i < d\ \mathrm{or}\ k_{d} \neq 1) \label{eq:Lis(v)}.
\end{align}
\end{lemma}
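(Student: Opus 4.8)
The plan is to obtain \eqref{eq:B(v)} by a short direct computation and to deduce \eqref{eq:Li(v)} and \eqref{eq:Lis(v)} from a single estimate proved by induction on the weight $|\mathbf{k}|$.

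For \eqref{eq:B(v)}: $\mathcal{B}_{l}(z)$ is a polynomial of degree $l$ in $\log(-z)$, and along $\gamma_{i}$ the product $z_{1,i-1}v_{i,d}$ never meets $\mathbb{R}_{\ge 0}$, by the condition defining $D_{\boldsymbol{z},\boldsymbol{\gamma},i}$, so $\log(-\cdot)$ undergoes no branch jump and, since $z_{i}=1$,
\[\log(-z_{1,i-1}v_{i,d}) = \log(-z_{1,i}v_{i+1,d}) + \log v_{i}, \qquad \log v_{i} = O(1-v_{i}).\]
Substituting this into $\mathcal{B}_{l}$ gives $\mathcal{B}_{l}(z_{1,i-1}v_{i,d}) - \mathcal{B}_{l}(z_{1,i}v_{i+1,d}) = O(1-v_{i}) = O((1-v_{i})^{1-\varepsilon})$.

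For \eqref{eq:Li(v)}: I would prove, by induction on $|\mathbf{k}|$, the flexible statement that if a single argument of $\Li_{\mathbf{k}}$ is replaced by a function holomorphic in $v_{i}$ near $v_{i}=1$, the other arguments held fixed, and the resulting value has a finite limit as $v_{i}\to 1$, then that value differs from its limit by $O((1-v_{i})^{1-\varepsilon})$. Specializing the moving argument to the $i$-th one, equal to $v_{i}$ itself, and recalling that this limit is finite precisely when $i<d$ or $k_{d}\neq 1$, one obtains \eqref{eq:Li(v)}. The induction step writes the difference as $\int\partial_{w}\Li_{\mathbf{k}}(\dots,w,\dots)\,dw$ along the arc described by the moving argument as $v_{i}\to 1$ along $\gamma_{i}$. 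The derivative formula for a general argument, derived from the series \eqref{eq:Lise} exactly as the $\partial_{z_{1}}$-formulas of Section \ref{sec:Proof} were, equals $w^{-1}\Li_{k_{1},\dots,k_{i}-1,\dots,k_{d}}(\dots,w,\dots)$ when $k_{i}>1$ and
\[\frac{\Li_{\mathbf{k}\setminus i}(\dots,z_{i-1}w,z_{i+1},\dots)}{1-w}-\frac{\Li_{\mathbf{k}\setminus i}(\dots,z_{i-1},wz_{i+1},\dots)}{w(1-w)}\]
when $k_{i}=1$ (with the evident modifications if $i=1$ or $i=d$), where $\mathbf{k}\setminus i$ is $\mathbf{k}$ with its $i$-th entry removed; all polylogarithms on the right have weight $|\mathbf{k}|-1$. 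In the case $k_{i}>1$ the integrand is $O(1)$ when $i<d$ (the polylogarithm on the right having a finite limit) and $O(|\log(1-w)|^{d})$ when $i=d$ (by Lemma \ref{le:polynomialz}), so it integrates over an arc of length $O(1-v_{i})$ to $O((1-v_{i})^{1-\varepsilon})$. In the case $k_{i}=1$ (which forces $i<d$), one combines the two terms over the common denominator $1-w$, expands the numerator about $w=1$, and applies the induction hypothesis to the two weight-$(|\mathbf{k}|-1)$ differences that appear; this shows the numerator is $O((1-w)^{1-\varepsilon})$, hence $\partial_{w}\Li_{\mathbf{k}} = O((1-w)^{-\varepsilon})$, whose integral over the arc is again $O((1-v_{i})^{1-\varepsilon})$. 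The base case $|\mathbf{k}|=1$ is immediate from $\Li_{1}(w)=-\log(1-w)$. Finally, \eqref{eq:Lis(v)} follows by expanding $\Li^{\star}_{\mathbf{k}}(\boldsymbol{z}) = \sum_{(\mathbf{k}';\boldsymbol{z}')\preceq(\mathbf{k};\boldsymbol{z})}\Li_{\mathbf{k}'}(\boldsymbol{z}')$: in each summand the dependence on $v_{i}$ lies in a single argument $z_{a,b}$ with $a\le i\le b$, and the hypothesis $i<d$ or $k_{d}\neq 1$ forces this argument to be non-final in $\boldsymbol{z}'$ or to carry an index $\ge 2$, so that \eqref{eq:Li(v)} applies to each summand at weight $|\mathbf{k}|$.

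The step I expect to be the main obstacle is the $k_{i}=1$ case of the induction, where $\partial_{w}\Li_{\mathbf{k}}$ carries a genuine pole $(1-w)^{-1}$ and one must verify that its numerator vanishes to order $(1-w)^{1-\varepsilon}$, i.e., that the pole cancels. This cancellation is imposed by the hypothesis ($i<d$ or $k_{d}\neq 1$), which makes the limit finite; rendering it quantitative is precisely why the induction hypothesis must be formulated for an arbitrary holomorphic deformation of one argument toward a (possibly borderline) limit, where only logarithmic — and no fractional-power — corrections to the $O(1-v_{i})$ rate can arise.
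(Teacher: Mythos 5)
Your proposal is correct in outline, and for \eqref{eq:B(v)} and \eqref{eq:Lis(v)} it coincides with the paper's argument (the paper likewise gets \eqref{eq:B(v)} from $\log(-z_{1,i-1}v_{i,d})=\log(-z_{1,i}v_{i+1,d})+O(z_i-v_i)$, and likewise deduces \eqref{eq:Lis(v)} by applying the single-argument estimate to each summand of the $\preceq$-expansion). Where you genuinely diverge is the key estimate \eqref{eq:Li(v)} for an interior position $i<d$. You run a fresh induction on the weight, differentiating in the moving argument and integrating along the arc, which forces you to confront the $k_i=1$ case where $\partial_w\Li_{\mathbf{k}}$ has an apparent pole at $w=1$ and you must prove the numerator vanishes to order $(1-w)^{1-\varepsilon}$. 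The paper avoids this entirely: it factors the iterated integral \eqref{eq:Liint} at position $i$, writing the difference as an outer iterated integral (in the variables $t_{>i}$) applied to the difference of two depth-$i$ polylogarithms in the innermost variable; that inner difference is controlled by the already-proved last-argument estimate \eqref{eq:v_{d}-z_{d}} from Lemma \ref{le:polynomialz}, giving $O((z_i-v_i)t\log^i((z_i-v_i)t))$, and the outer integral of $t\log^n t$ against the remaining forms converges absolutely. So the paper reduces the interior-argument case to the last-argument case and never meets a pole, whereas your route re-proves the singular analysis at every position; your cancellation argument does close (the two weight-$(|\mathbf{k}|-1)$ terms tend to the same limit because $z_i=1$, and the induction hypothesis makes this quantitative), but it is the more delicate of the two, and the paper's bound $O((z_i-v_i)\log^i(z_i-v_i))$ is also sharper than your $O((1-v_i)^{1-\varepsilon})$, though both suffice for $\sim$. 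One detail you should tighten: the paths $\gamma_i$ are only assumed continuous, so "an arc of length $O(1-v_i)$" is not automatic; like the paper, you should deform the contour of $\int\partial_w\Li\,dw$ to the straight segment from $1$ to $v_i$ before estimating.
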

\begin{proof}
Equation \eqref{eq:B(v)} can be verified from
\[\log(-{z}_{1,i-1}{v}_{i,d}) = \log(-{z}_{1,i}{v}_{i+1,d}) + O(z_{i}-v_{i}).\]
We now proceed to prove equation \eqref{eq:Li(v)}. Since the case $i = d$ and $k_{d} = 1$ follows from \eqref{eq:v_{d}-z_{d}}, it is sufficient to consider the case $i < d$. For any $(\boldsymbol{z}_{1,i-1},\boldsymbol{v}_{i,d}) \in D_{\boldsymbol{z}, \boldsymbol{\gamma}, i}$ with $i < d$ (including the case $z_{i} \neq 1$), we prove that
\begin{equation}
\Li_{\mathbf{k}}(\boldsymbol{z}_{1,i-1}, \boldsymbol{v}_{i, d}) - \Li_{\mathbf{k}}(\boldsymbol{z}_{1,i}, \boldsymbol{v}_{i+1,d}) = O\left((z_{i}-v_{i})\log^{i}(z_{i}-v_{i})\right). \label{eq:v_{i}-z_{i}}
\end{equation}
The left-hand side of \eqref{eq:v_{i}-z_{i}} can be written as 
\begin{align*}
&(-1)^{d-i}\int_{0}^{1} \frac{ \Li_{\mathbf{k}_{1,i}}(\boldsymbol{z}_{1,i-1},{v}_{i,d}t)\, dt}{t - 1/v_{i+1,d}} \omega_{0}^{k_{i+1}-1}\cdots \omega_{1/v_{d,d}}\omega_{0} ^{k_{d}-1}\\
&\quad - (-1)^{d-i}\int_{0}^{1} \frac{ \Li_{\mathbf{k}_{1,i}}(\boldsymbol{z}_{1,i-1},z_{i}{v}_{i+1,d}t)\, dt}{t - 1/v_{i+1,d}} \omega_{0}^{k_{i+1}-1}\cdots \omega_{1/v_{d,d}}\omega_{0} ^{k_{d}-1},
\end{align*}
where we recall \eqref{eq:Liint} for the notation. By equation \eqref{eq:v_{d}-z_{d}}, we can estimate
\begin{align*}
&\Li_{\mathbf{k}_{1,i}}(\boldsymbol{z}_{1,i-1},{v}_{i,d}t)-\Li_{\mathbf{k}_{1,i}}(\boldsymbol{z}_{1,i-1},{z}_{i}{v}_{i+1,d}t) = O\left( (z_{i} -v_{i})t\log^{i}((z_{i}-v_{i})t)\right)
\end{align*}
as $v_{i} \to z_{i}$. Since the iterated integral
\[(-1)^{d-i}\int_{0}^{1} \frac{ t\log^{n}{t}\, dt}{t - 1/v_{i+1,d}} \omega_{0}^{k_{i+1}-1}\cdots \omega_{1/v_{d,d}}\omega_{0} ^{k_{d}-1}\]
converges absolutely for any $n \ge 0$, we can obtain equation \eqref{eq:v_{i}-z_{i}}, which implies equation \eqref{eq:Li(v)}. Equation \eqref{eq:Lis(v)} can be understood from \eqref{eq:v_{d}-z_{d}}, \eqref{eq:v_{i}-z_{i}} and
\[O\left((z_{h,i}v_{i+1,j}-z_{h,i-1}v_{i,j})^{1-\varepsilon}\right) = O\left((z_{i} -v_{i})^{1-\varepsilon}\right)\]
as $v_{i} \to z_{i}$, for any $1 \le h \le i \le j \le d$. This completes the proof for all equations.
\end{proof}
The following lemma will be used in computing limits as $v_{i} \to 1$.
\begin{lemma} \label{le:zto1}
For $(\boldsymbol{z}_{1,d-1}, {v}_{d}) \in D_{\boldsymbol{z}, \boldsymbol{\gamma}, d}$ with $z_{d} = 1$, we have
\begin{align}
&\lim_{v_{d} \to 1}\mathcal{L}^{+}_{\mathbf{k};\boldsymbol{z}_{1,d-1}, {v}_{d}}(0) = \Li^{\shuffle}_{\mathbf{k}}(\boldsymbol{z}), \label{eq:L+0} \\
&\lim_{v_{d} \to 1}\mathcal{L}^{\star, +}_{\mathbf{k};\boldsymbol{z}_{1,d-1}, {v}_{d}}(0) = \Li^{\star, \shuffle}_{\mathbf{k}}(\boldsymbol{z}), \label{eq:Ls+0} \\
&\lim_{v_{d} \to 1}\mathcal{L}^{\star, -}_{\mathbf{k};\boldsymbol{z}_{1,d-1}, {v}_{d}}(0) = \sum_{m=0}^{d} (-1)^{d-m}\Li^{\star, \shuffle}_{\mathbf{k}_{1,m}}(\boldsymbol{z}_{1,m})\, \delta_{\mathbf{k}_{m+1,d}}(\boldsymbol{z}_{m+1,d}), \label{eq:Ls-0}\\
&\lim_{v_{d} \to 1}\rho^{-1} \circ \mathcal{L}^{+}_{\mathbf{k};\boldsymbol{z}_{1,d-1}, {v}_{d}}(0) = \Li^{*}_{\mathbf{k}}(\boldsymbol{z}), \label{eq:rL+0}\\
&\lim_{v_{d} \to 1}\rho^{-1} \circ \mathcal{L}^{\star, +}_{\mathbf{k};\boldsymbol{z}_{1,d-1}, {v}_{d}}(0) = \Li^{\star, *}_{\mathbf{k}}(\boldsymbol{z}), \label{eq:rLs+0}\\
&\lim_{v_{d} \to 1} \rho^{-1} \circ \mathcal{L}^{\star, -}_{\mathbf{k};\boldsymbol{z}_{1,d-1}, {v}_{d}}(0) = \sum_{m=0}^{d} (-1)^{d-m}\Li^{\star, *}_{\mathbf{k}_{1,m}}(\boldsymbol{z}_{1,m})\, \delta_{\mathbf{k}_{m+1,d}}(\boldsymbol{z}_{m+1,d}).\label{eq:rLs-0}
\end{align}
\end{lemma}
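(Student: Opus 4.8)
The plan is to deduce all six limits from the polynomial asymptotics \eqref{eq:L+}--\eqref{eq:Ls-}, four of them at once and the last two with one additional closed-form identity. Recall that $P_{v_d}(T)\sim Q_{v_d}(T)$ for polynomials means the coefficient of each $T^n$ on the left is asymptotic to the corresponding one on the right, so that, when $Q_{v_d}$ has coefficients independent of $v_d$, the value at $T=0$ of $P_{v_d}$ converges to that of $Q_{v_d}$. Applying this to \eqref{eq:L+} and \eqref{eq:Ls+} gives $\lim_{v_d\to1}\mathcal{L}^{+}_{\mathbf{k};\boldsymbol{z}_{1,d-1},v_d}(0)=\mathcal{L}^{\shuffle}_{\mathbf{k};\boldsymbol{z}}(0)=\Li^{\shuffle}_{\mathbf{k}}(\boldsymbol{z})$ and $\lim_{v_d\to1}\mathcal{L}^{\star,+}_{\mathbf{k};\boldsymbol{z}_{1,d-1},v_d}(0)=\mathcal{L}^{\star,\shuffle}_{\mathbf{k};\boldsymbol{z}}(0)=\Li^{\star,\shuffle}_{\mathbf{k}}(\boldsymbol{z})$, which are \eqref{eq:L+0} and \eqref{eq:Ls+0}. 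For the two $\rho^{-1}$-versions I would first observe that $\rho^{-1}$ is a fixed $\mathbb{C}$-linear endomorphism of $\mathbb{C}[T]$, so each coefficient of $\rho^{-1}P$ is a fixed linear combination of the coefficients of $P$; hence $\rho^{-1}$ preserves the relation $\sim$. Since $\mathcal{L}^{\shuffle}=\rho\circ\mathcal{L}^{*}$ for every index pair, summing the defining relation of $\mathcal{L}^{\star,\shuffle}_{\mathbf{k};\boldsymbol{z}}$ over $(\mathbf{k}';\boldsymbol{z}')\preceq(\mathbf{k};\boldsymbol{z})$ gives $\mathcal{L}^{\star,\shuffle}_{\mathbf{k};\boldsymbol{z}}=\rho\circ\mathcal{L}^{\star,*}_{\mathbf{k};\boldsymbol{z}}$, where $\mathcal{L}^{\star,*}_{\mathbf{k};\boldsymbol{z}}\coloneqq\sum_{(\mathbf{k}';\boldsymbol{z}')\preceq(\mathbf{k};\boldsymbol{z})}\mathcal{L}^{*}_{\mathbf{k}';\boldsymbol{z}'}$; applying $\rho^{-1}$ to \eqref{eq:L+} and \eqref{eq:Ls+} before taking $T=0$ then yields \eqref{eq:rL+0} and \eqref{eq:rLs+0}, using $\mathcal{L}^{*}_{\mathbf{k}';\boldsymbol{z}'}(0)=\Li^{*}_{\mathbf{k}'}(\boldsymbol{z}')$.

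For \eqref{eq:Ls-0} and \eqref{eq:rLs-0}, equation \eqref{eq:Ls-} together with the same observations reduces matters to evaluating $\mathcal{L}^{\star,\shuffle}_{\mathbf{k};\boldsymbol{z}}(-\log(-z_d))$ and, respectively, $\rho^{-1}$ applied to the polynomial $\mathcal{L}^{\star,\shuffle}_{\mathbf{k};\boldsymbol{z}}(T-\log(-z_d))$ and then evaluated at $T=0$, where $z_d=1$. To move $\rho^{-1}$ past the translation I would use that $\rho$ commutes with $S_c\colon P(T)\mapsto P(T-c)$: this is immediate from the defining identity, since $\rho(S_c e^{Tu})=\rho(e^{-cu}e^{Tu})=e^{-cu}\Gamma(1+u)e^{(T+\gamma)u}=S_c\rho(e^{Tu})$, and $\rho^{-1}$ inherits the commutation. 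Hence the second quantity equals $\mathcal{L}^{\star,*}_{\mathbf{k};\boldsymbol{z}}(-\log(-z_d))$, and both remaining identities reduce to computing $\mathcal{L}^{\star,\bullet}_{\mathbf{k};\boldsymbol{z}}(-\log(-1))$ for $\bullet\in\{\shuffle,*\}$.

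The core of the argument is then the closed form: writing $H$ for the number of trailing $(1,1)$-pairs of $(\mathbf{k};\boldsymbol{z})$ (so $\mathbf{k}_{d-H+1,d}=\boldsymbol{z}_{d-H+1,d}=(\{1\}^{H})$ and $(k_{d-H},z_{d-H})\neq(1,1)$), one has, for $\bullet\in\{\shuffle,*\}$,
\[\mathcal{L}^{\star,\bullet}_{\mathbf{k};\boldsymbol{z}}(T)=\sum_{j=0}^{H}\Li^{\star,\bullet}_{\mathbf{k}_{1,d-j}}(\boldsymbol{z}_{1,d-j})\,\frac{T^{j}}{j!}.\]
I would prove this starting from the single-index formula \eqref{eq:shufflepolynomial} (for $\bullet=*$ from its standard harmonic-regularization analogue, which also follows from \eqref{eq:shufflepolynomial} on applying $\rho^{-1}$ and $\rho^{-1}(T^m/m!)=\sum_{i\le m}a_{m-i}T^{i}/i!$ with $\sum_k a_k u^k=\Gamma(1+u)^{-1}e^{-\gamma u}$), summing over all $(\mathbf{k}';\boldsymbol{z}')\preceq(\mathbf{k};\boldsymbol{z})$, and collecting by the power of $T$. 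The coefficient of $T^{j}/j!$ sums $\Li^{\bullet}_{\mathbf{k}'_{1,d'-j}}(\boldsymbol{z}'_{1,d'-j})$ (with $d'$ the depth of $\mathbf{k}'$) over those $(\mathbf{k}';\boldsymbol{z}')$ with at least $j$ trailing $(1,1)$-pairs; since merging two or more entries strictly increases the weight of the resulting block, every $(1,1)$-entry of such a $(\mathbf{k}';\boldsymbol{z}')$ is a single unmerged $(1,1)$-entry of $(\mathbf{k};\boldsymbol{z})$, so these are precisely the pairs obtained by appending $j$ copies of $(1,1)$ to a pair $\preceq(\mathbf{k}_{1,d-j};\boldsymbol{z}_{1,d-j})$, and this bijection identifies the coefficient with $\Li^{\star,\bullet}_{\mathbf{k}_{1,d-j}}(\boldsymbol{z}_{1,d-j})$. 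Evaluating at $T=-\log(-1)$ and substituting $m=d-j$ gives $\sum_{m}(-1)^{d-m}\Li^{\star,\bullet}_{\mathbf{k}_{1,m}}(\boldsymbol{z}_{1,m})(\log(-z_d))^{d-m}/(d-m)!$; because $\mathbf{k}_{m+1,d}=\boldsymbol{z}_{m+1,d}=(\{1\}^{d-m})$ exactly when $0\le d-m\le H$, in which range $(\log(-z_d))^{d-m}/(d-m)!=\delta_{\mathbf{k}_{m+1,d}}(\boldsymbol{z}_{m+1,d})$ (with $\delta_{\emptyset}(\emptyset)=1$), while $\delta_{\mathbf{k}_{m+1,d}}(\boldsymbol{z}_{m+1,d})=0$ otherwise, this equals the right-hand sides of \eqref{eq:Ls-0} and \eqref{eq:rLs-0}.

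The step I expect to be the main obstacle is the polynomial identity of the preceding paragraph, and in particular the combinatorial bijection between the pairs $(\mathbf{k}';\boldsymbol{z}')\preceq(\mathbf{k};\boldsymbol{z})$ having a prescribed number of trailing $(1,1)$-pairs and the pairs $\preceq(\mathbf{k}_{1,d-j};\boldsymbol{z}_{1,d-j})$, together with having the stuffle analogue of \eqref{eq:shufflepolynomial} available; the remaining steps are routine, relying only on $\rho^{-1}$ being a fixed linear map, hence compatible with $\sim$ and with constant translations of $T$.
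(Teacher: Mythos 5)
Your proposal is correct and follows essentially the same route as the paper: both arguments rest on \eqref{eq:L+}--\eqref{eq:Ls-}, on \eqref{eq:shufflepolynomial}, on the fact that $\rho^{-1}$ is a fixed degree-preserving linear map (hence compatible with $\sim$ and with the substitution $T \mapsto T-\log(-z_{d})$), and on the observation that a merged entry has weight at least $2$, so that only unmerged trailing $(1,1)$-entries survive in the $\delta$-terms. The only difference is organizational: you package the trailing-$(1,1)$ combinatorics as a closed form for $\mathcal{L}^{\star,\bullet}_{\mathbf{k};\boldsymbol{z}}(T)$ and invoke the commutation of $\rho$ with translations, whereas the paper first evaluates $\lim_{v_{d}\to 1}\mathcal{L}^{-}_{\mathbf{k}';\cdot}(0)$ for each $(\mathbf{k}';\boldsymbol{z}')$, expands the shifted polynomial binomially to apply $\rho^{-1}$ termwise, and then performs the same cancellation in the sum over $(\mathbf{k}';\boldsymbol{z}')\preceq(\mathbf{k};\boldsymbol{z})$.
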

\begin{proof}
Equations \eqref{eq:L+0} and \eqref{eq:Ls+0} are obvious from equations \eqref{eq:L+} and \eqref{eq:Ls+}, respectively. We proceed to the proof of equation \eqref{eq:Ls-0}. From equations \eqref{eq:L-} and \eqref{eq:shufflepolynomial}, we have
\begin{align}
\lim_{v_{d} \to 1} \mathcal{L}^{-}_{\mathbf{k};\boldsymbol{z}_{1,d-1}, {v}_{d}}(0) &= \mathcal{L}^{\shuffle}_{\mathbf{k};\boldsymbol{z}}(-\log(-z_{d})) \label{eq:L-0=}\\ 
&= \sum_{m=0}^{d} (-1)^{m} \Li^{\shuffle}_{\mathbf{k}_{1,d-m}}(\boldsymbol{z}_{1,d-m})\,\delta_{\mathbf{k}_{d-m+1,d}}(\boldsymbol{z}_{d-m+1,d}). \nonumber
\end{align}
Therefore, we have
\begin{align*}
\lim_{v_{d} \to 1} \mathcal{L}^{\star, -}_{\mathbf{k};\boldsymbol{z}_{1,d-1}, {v}_{d}}(0)
&= \sum_{(\mathbf{k}'; \boldsymbol{z}') \preceq (\mathbf{k}; \boldsymbol{z})} \sum_{m=0}^{d'} (-1)^{m} \Li^{\shuffle}_{\mathbf{k}'_{1, d'-m}}\left(\boldsymbol{z}'_{1, d'-m}\right)\\
&\qquad \times\delta_{\mathbf{k}'_{d'-m+1, d'}}\left(\boldsymbol{z}'_{d'-m+1, d'}\right),
\end{align*}
where $d' \coloneqq \mathrm{dep}(\mathbf{k}')$. The index $\mathbf{k}'_{d'-m+1, d'}$ is written in the form $(k_{n}\boxempty_{n} \cdots \boxempty_{d-1}k_{d})$. When at least one $\boxempty_{i}$ is $\text{``} + \text{''}$ for $i \in \{n,\dots,d -1 \}$, 
\begin{equation}\label{eq:delta}
\delta_{\mathbf{k}'_{d'-m+1, d'}}\left(\boldsymbol{z}'_{d'-m+1, d'}\right)
\end{equation}
vanishes. Therefore, the factor \eqref{eq:delta} can be nonzero only if
\[\mathbf{k}'_{d'-m+1, d'} = \mathbf{k}_{d-m+1, d} \text{\quad and \quad } \boldsymbol{z}'_{d'-m+1, d'} = \boldsymbol{z}_{d-m+1,d},\]
in which case $\mathbf{k}'_{1, d'-m}$ is written in the form $(k_{1}\boxempty_{1} \cdots \boxempty_{d-m} k_{d-m})$.
Thus, we obtain
\[\lim_{v_{d} \to 1} \mathcal{L}^{\star, -}_{\mathbf{k};\boldsymbol{z}_{1,d-1}, {v}_{d}}(0) = \sum_{m=0}^{d} (-1)^{m}\Li^{\star, \shuffle}_{\mathbf{k}_{1,d-m}}(\boldsymbol{z}_{1,d-m})\, \delta_{\mathbf{k}_{d-m+1,d}}(\boldsymbol{z}_{d-m+1,d}),\]
which corresponds to equation \eqref{eq:Ls-0}. We proceed to the proof of equation \eqref{eq:rL+0}.
By applying $\rho^{-1}$ to both sides of equation \eqref{eq:L+}, we obtain
\[\rho^{-1} \circ \mathcal{L}^{+}_{\mathbf{k};\boldsymbol{z}_{1,d-1}, {v}_{d}}(T) \sim \rho^{-1} \circ \mathcal{L}^{\shuffle}_{\mathbf{k};\boldsymbol{z}}(T).\]
This implies that
\[\lim_{v_{d} \to 1} \rho^{-1} \circ \mathcal{L}^{+}_{\mathbf{k};\boldsymbol{z}_{1,d-1}, {v}_{d}}(0) = \rho^{-1} \circ \mathcal{L}^{\shuffle}_{\mathbf{k};\boldsymbol{z}}(0) = \Li^{*}_{\mathbf{k}}(\boldsymbol{z}),\]
and therefore equation \eqref{eq:rL+0} holds. Equation \eqref{eq:rLs+0} is obtained by summing the above equation. We proceed to the proof of equation \eqref{eq:rLs-0}. We have
\begin{align*}
\mathcal{L}^{\shuffle}_{\mathbf{k};\boldsymbol{z}}(T-\log(-z_{d})) 
&=\sum_{m=0}^{h} \frac{1}{m!} \Li^{\shuffle}_{\mathbf{k}_{1,d-m}}(\boldsymbol{z}_{1,d-m})\, (T-\log(-z_{d}))^{m}\\
&= \sum_{j=0}^{h} \frac{(-\log(-z_{d}))^{j}}{j!} \sum_{m=j}^{h} \Li^{\shuffle}_{\mathbf{k}_{1,d-m}}(\boldsymbol{z}_{1,d-m})\frac{T^{m-j}}{(m-j)!} \\
&= \sum_{j=0}^{h} \mathcal{L}^{\shuffle}_{\mathbf{k}_{1,d-j};\boldsymbol{z}_{1,d-j}}(T)\, \frac{(-\log(-z_{d}))^{j}}{j!},
\end{align*}
where $h$ has the same meaning as in \eqref{eq:shufflepolynomial}. Therefore, by applying $\rho^{-1}$ to both sides of equation \eqref{eq:L-}, we have
\[\rho^{-1} \circ \mathcal{L}^{-}_{\mathbf{k};\boldsymbol{z}_{1,d-1}, {v}_{d}}(T) \sim \sum_{m=0}^{d} (-1)^{m}\mathcal{L}^{*}_{\mathbf{k}_{1,d-m};\boldsymbol{z}_{1,d-m}}(T)\,\delta_{\mathbf{k}_{d-m+1,d}}(\boldsymbol{z}_{d-m+1,d}),\]
which implies that
\[\lim_{v_{d} \to 1} \rho^{-1} \circ \mathcal{L}^{-}_{\mathbf{k};\boldsymbol{z}_{1,d-1}, {v}_{d}}(0) = \sum_{m=0}^{d}(-1)^{m} \Li^{*}_{\mathbf{k}_{1,d-m}}(\boldsymbol{z}_{1,d-m}) \, \delta_{\mathbf{k}_{d-m+1,d}}(\boldsymbol{z}_{d-m+1,d}).\]
Using the same method as was used to derive equation \eqref{eq:Ls-0} from \eqref{eq:L-0=}, we can obtain equation \eqref{eq:rLs-0}. This completes the proof for all equations.
\end{proof}

\subsection{Proof of Theorem \ref{th:main2}}\label{subsec:Proof2}
In this subsection, we complete the proof of Theorem \ref{th:main2}. As explained in Section \ref{subsec:outline2}, we fix $\boldsymbol{z} \in \mathcal{D}^{d}(\mathbb{R}_{\ge 0}\setminus\{1\})$, and compute the limit of equation \eqref{eq:P=Q} along the paths $\gamma_{1}, \dots, \gamma_{d}$, taking the limit sequentially in the order $v_{1} \to z_{1}, \dots, v_{d} \to z_{d}$.

For convenience, we define
\begin{align*}
Q^{\bullet}_{\mathbf{k}}(\boldsymbol{z}) &\coloneqq \sum_{0 \le m < n \le d} \sum_{\substack{a+b+ l=k_{n} \\ a,b, l \ge 0}}  (-1)^{m+|\mathbf{k}_{n+1,d}|+b} \mathcal{B}_{l}(z_{m+1,d})\\ \nonumber
&\qquad \times \Li^{\star, \bullet}_{\mathbf{k}_{1,m}}(\boldsymbol{z}_{1,m}) \tilde{\Li}^{a, \bullet}_{\mathbf{k}_{m+1,n-1}}(\boldsymbol{z}_{m+1,n-1})\Li^{b}_{\mathbf{k}_{n+1,d}}(1/\boldsymbol{z}_{n+1,d})
\end{align*}
for $\bullet = \{\shuffle , *\}$. We prove Theorem \ref{th:main2} separately for the case $\bullet = \shuffle$ and the case $\bullet = *$.

\textbf{Case $\bullet = \shuffle$.} We first compute the limit of equation \eqref{eq:P=Q} as $v_{1} \to z_{1}$ along the path $\gamma_{1}$. We assume $1 < d$. If $(k_{1},z_{1}) \neq (1,1)$, then neither side of equation \eqref{eq:P=Q} involves any divergent factors. Hence, we obtain
\[P_{\mathbf{k}}({z}_{1},\boldsymbol{v}_{2,d}) = Q_{\mathbf{k}}({z}_{1},\boldsymbol{v}_{2,d}) = Q^{\shuffle}_{\mathbf{k}}({z}_{1},\boldsymbol{v}_{2,d}).\]
When $(k_{1},z_{1}) = (1,1)$, the left-hand side of equation \eqref{eq:P=Q} does not involve any divergent factors, and we have $P_{\mathbf{k}}(\boldsymbol{v}) \sim P_{\mathbf{k}}({z}_{1},\boldsymbol{v}_{2,d})$. The right-hand side of equation \eqref{eq:P=Q} involves the polylogarithm $\Li^{\star}_{1}({v}_{1})$, which is the only divergent factor. Let $Q_{\mathbf{k};\boldsymbol{v}}(T)$ be the polynomial obtained by replacing each $\Li^{\star}_{1}({v}_{1})$ in $Q_{\mathbf{k}}(\boldsymbol{v})$ with $\mathcal{L}^{\star, +}_{{1};{v}_{1}}(T)$. Then, $P_{\mathbf{k}}(\boldsymbol{v}) = Q_{\mathbf{k}}(\boldsymbol{v}) = Q_{\mathbf{k};\boldsymbol{v}}(-\log(1-v_{1}))$ holds, and, by equation \eqref{eq:Ls+} and Lemma \ref{le:sim}, there exists a polynomial $Q(T)$ with coefficients independent of $v_{1}$ such that $Q_{\mathbf{k};\boldsymbol{v}}(T) \sim Q(T)$. Therefore, we have
\begin{equation}
P_{\mathbf{k}}(\boldsymbol{v}) \sim Q_{\mathbf{k};\boldsymbol{v}}(T). \label{eq:PequivQT}
\end{equation}
In particular, using equation \eqref{eq:Ls+0}, we obtain
\[P_{\mathbf{k}}({z}_{1},\boldsymbol{v}_{2,d}) = \lim_{v_{1} \to 1}Q_{\mathbf{k};\boldsymbol{v}}(0) = Q^{\shuffle}_{\mathbf{k}}({z}_{1},\boldsymbol{v}_{2,d}).\]
Therefore, regardless of the value of $z_{1}$, we conclude that
\begin{equation}
P_{\mathbf{k}}({z}_{1},\boldsymbol{v}_{2,d}) = Q^{\shuffle}_{\mathbf{k}}({z}_{1},\boldsymbol{v}_{2,d}). \label{eq:shufflez_{1}}
\end{equation}

Next, we compute the limit of equation \eqref{eq:shufflez_{1}} as $v_{2} \to z_{2}$ along the path $\gamma_{2}$. We assume $2 < d$. If $(k_{2},z_{2}) \neq (1,1)$, then neither side involves any divergent factors, and hence we have
\begin{align}
P_{\mathbf{k}}(\boldsymbol{z}_{1,2},\boldsymbol{v}_{3,d}) = Q^{\shuffle}_{\mathbf{k}}(\boldsymbol{z}_{1,2},\boldsymbol{v}_{3,d}). \label{eq:shufflez_{2}}
\end{align}
When $(k_{2},z_{2}) = (1,1)$, the left-hand side of equation \eqref{eq:shufflez_{1}} does not involve any divergent factors, and we have $P_{\mathbf{k}}({z}_{1},\boldsymbol{v}_{2,d}) \sim P_{\mathbf{k}}(\boldsymbol{z}_{1,2},\boldsymbol{v}_{3,d})$. The right-hand side involves the polylogarithm $\Li^{\star}_{1}({v}_{2})$ and multiple polylogarithms of the form $\Li^{\star}_{k',1}(z_{1},{v}_{2})$, which are the only divergent factors. Let $Q^{\shuffle}_{\mathbf{k};{z}_{1},\boldsymbol{v}_{2,d}}(T)$ be the polynomial obtained by replacing each $\Li^{\star}_{1}({v}_{2})$ and $\Li^{\star}_{k',1}(z_{1},{v}_{2})$ in $Q^{\shuffle}_{\mathbf{k}}({z}_{1},\boldsymbol{v}_{2,d})$ with $\mathcal{L}^{\star, +}_{{1};{v}_{2}}(T)$ and  $\mathcal{L}^{\star, +}_{k',1;z_{1},{v}_{2}}(T)$, respectively. Then, it follows that
\[P_{\mathbf{k}}({z}_{1},\boldsymbol{v}_{2,d}) \sim Q^{\shuffle}_{\mathbf{k};{z}_{1},\boldsymbol{v}_{2,d}}(T),\]
and in particular,
\[P_{\mathbf{k}}(\boldsymbol{z}_{1,2},\boldsymbol{v}_{3,d}) = \lim_{v_{2} \to 1}Q^{\shuffle}_{\mathbf{k};{z}_{1},\boldsymbol{v}_{2,d}}(0) = Q^{\shuffle}_{\mathbf{k}}(\boldsymbol{z}_{1,2},\boldsymbol{v}_{3,d}),\]
which coincides with equation \eqref{eq:shufflez_{2}}.

By continuing this process up to $v_{d-1} \to z_{d-1}$, we conclude that
\begin{equation}
P_{\mathbf{k}}(\boldsymbol{z}_{1,d-1}, {v}_{d}) = Q^{\shuffle}_{\mathbf{k}}(\boldsymbol{z}_{1,d-1}, {v}_{d}). \label{eq:shufflez_{d-1}}
\end{equation}
As the final step, we compute the limit of equation \eqref{eq:shufflez_{d-1}} as $v_{d} \to z_{d}$ along the path $\gamma_{d}$. If $(k_{d},z_{d}) \neq (1,1)$, then neither side involves any divergent factors, and hence we obtain $P_{\mathbf{k}}(\boldsymbol{z}) = Q^{\shuffle}_{\mathbf{k}}(\boldsymbol{z})$, which corresponds to equation \eqref{eq:main2} for $\bullet = \shuffle$ and $(k_{d},z_{d}) \neq (1,1)$. When $(k_{d},z_{d}) = (1,1)$, both sides of equation $\eqref{eq:shufflez_{d-1}}$ diverge as $v_{d} \to z_{d}$. Let 
\begin{equation}
P_{\mathbf{k};\boldsymbol{z}_{1,d-1}, {v}_{d}}(T) \coloneqq (-1)^{d}\mathcal{L}^{\star, -}_{\mathbf{k};\boldsymbol{z}_{1,d-1}, {v}_{d}}(T) - (-1)^{|\mathbf{k}|}\mathcal{L}^{+}_{\mathbf{k}; 1/\boldsymbol{z}_{1,d-1}, 1/{v}_{d}}(T) \label{eq:defP}
\end{equation}
and let $Q^{\shuffle}_{\mathbf{k};\boldsymbol{z}_{1,d-1}, {v}_{d}}(T)$ be the polynomial obtained by replacing multiple polylogarithms of the form $\Li^{}_{\mathbf{k}',1}(1/\boldsymbol{z}', 1/{v}_{d})$  in $Q^{\shuffle}_{\mathbf{k}}(\boldsymbol{z}_{1,d-1}, {v}_{d})$ with $\mathcal{L}^{+}_{\mathbf{k}', 1; 1/\boldsymbol{z}', 1/{v}_{d}}(T)$. Then, equation \eqref{eq:shufflez_{d-1}} can be rewritten as
\[P_{\mathbf{k};\boldsymbol{z}_{1,d-1}, {v}_{d}}(-\log(1-1/v_{d})) = Q^{\shuffle}_{\mathbf{k};\boldsymbol{z}_{1,d-1}, {v}_{d}}(-\log(1-1/v_{d})).\]
In addition, by equations \eqref{eq:L+} and \eqref{eq:Ls-} and Lemma \ref{le:sim}, there exist polynomials $P(T)$ and $Q(T)$ with coefficients independent of $v_{i}$ such that $P_{\mathbf{k};\boldsymbol{z}_{1,d-1}, {v}_{d}}(T) \sim P(T)$ and $Q^{\shuffle}_{\mathbf{k};\boldsymbol{z}_{1,d-1}, {v}_{d}}(T) \sim Q(T)$. Therefore, we have
\[P_{\mathbf{k};\boldsymbol{z}_{1,d-1}, {v}_{d}}(T) \sim Q^{\shuffle}_{\mathbf{k};\boldsymbol{z}_{1,d-1}, {v}_{d}}(T),\]
and in particular,
\[\lim_{v_{d} \to 1} P_{\mathbf{k};\boldsymbol{z}_{1,d-1}, {v}_{d}}(0) = \lim_{v_{d} \to 1}  Q^{\shuffle}_{\mathbf{k};\boldsymbol{z}_{1,d-1}, {v}_{d}}(0).\]
By applying equations \eqref{eq:L+0} and  \eqref{eq:Ls-0} to the above equation, we obtain equation \eqref{eq:main2} for $\bullet = \shuffle$ and $(k_{d},z_{d}) = (1,1)$. This completes the proof of Theorem \ref{th:main2} for $\bullet = \shuffle$.

\textbf{Case $\bullet = *$.}

The approach is the same as in the case $\bullet = \shuffle $, except for the use of $\rho^{-1}$. We first compute the limit of equation \eqref{eq:P=Q} as $v_{1} \to z_{1}$ along the path $\gamma_{1}$. We assume $1 < d$. If $(k_{1},z_{1}) \neq (1,1)$, then we have
\[P_{\mathbf{k}}({z}_{1},\boldsymbol{v}_{2,d}) = Q_{\mathbf{k}}({z}_{1},\boldsymbol{v}_{2,d}) = Q^{*}_{\mathbf{k}}({z}_{1},\boldsymbol{v}_{2,d}).\]
When $(k_{1},z_{1}) = (1,1)$, as in the case $\bullet = \shuffle$, we have equation \eqref{eq:PequivQT}. By applying $\rho^{-1}$ to both sides of equation \eqref{eq:PequivQT}, we obtain
\[P_{\mathbf{k}}(\boldsymbol{v}) \sim \rho^{-1} \circ Q_{\mathbf{k};\boldsymbol{v}}(T).\]
Using equation \eqref{eq:rLs+0}, we obtain
\[P_{\mathbf{k}}({z}_{1},\boldsymbol{v}_{2,d}) = \lim_{v_{1} \to 1}\rho^{-1} \circ Q_{\mathbf{k};\boldsymbol{v}}(0) = Q^{*}_{\mathbf{k}}({z}_{1},\boldsymbol{v}_{2,d}).\]
Therefore, regardless of the value of $z_{1}$, it follows that
\begin{equation}
P_{\mathbf{k}}({z}_{1},\boldsymbol{v}_{2,d}) = Q^{*}_{\mathbf{k}}({z}_{1},\boldsymbol{v}_{2,d}). \label{eq:stufflez_{1}}
\end{equation}

Next, we compute the limit of equation \eqref{eq:stufflez_{1}} as $v_{2} \to z_{2}$ along the path $\gamma_{2}$. We assume $2 < d$. If $(k_{2},z_{2}) \neq (1,1)$, then we have
\begin{equation}
P_{\mathbf{k}}(\boldsymbol{z}_{1,2},\boldsymbol{v}_{3,d}) = Q^{*}_{\mathbf{k}}(\boldsymbol{z}_{1,2},\boldsymbol{v}_{3,d}). \label{eq:stufflez_{2}}
\end{equation}
We consider the case $(k_{2},z_{2}) = (1,1)$. Let $Q^{*}_{\mathbf{k};{z}_{1},\boldsymbol{v}_{2,d}}(T)$ be the polynomial obtained by replacing each $\Li^{\star}_{1}({v}_{2})$ and $\Li^{\star}_{k',1}(z_{1},{v}_{2})$ in $Q^{*}_{\mathbf{k}}({z}_{1},\boldsymbol{v}_{2,d})$ with $\mathcal{L}^{\star, +}_{{1};{v}_{2}}(T)$ and  $\mathcal{L}^{\star, +}_{k',1;z_{1},{v}_{2}}(T)$, respectively. Because $P_{\mathbf{k}}({z}_{1},\boldsymbol{v}_{2,d}) \sim Q^{*}_{\mathbf{k};{z}_{1},\boldsymbol{v}_{2,d}}(T)$ holds, we obtain
\[P_{\mathbf{k}}({z}_{1},\boldsymbol{v}_{2,d}) \sim \rho^{-1} \circ Q^{*}_{\mathbf{k};{z}_{1},\boldsymbol{v}_{2,d}}(T).\]
In particular, it follows that
\[P_{\mathbf{k}}(\boldsymbol{z}_{1,2},\boldsymbol{v}_{3,d}) = \lim_{v_{2} \to 1}\rho^{-1} \circ Q^{*}_{\mathbf{k};{z}_{1},\boldsymbol{v}_{2,d}}(0) = Q^{*}_{\mathbf{k}}(\boldsymbol{z}_{1,2},\boldsymbol{v}_{3,d}),\]
which coincides with equation \eqref{eq:stufflez_{2}}. 

By continuing this process up to $v_{d-1} \to z_{d-1}$, we conclude that
\begin{equation}
P_{\mathbf{k}}(\boldsymbol{z}_{1,d-1}, {v}_{d}) = Q^{*}_{\mathbf{k}}(\boldsymbol{z}_{1,d-1}, {v}_{d}). \label{eq:stufflez_{d-1}}
\end{equation}

As the final step, we compute the limit of equation \eqref{eq:stufflez_{d-1}} as $v_{d} \to z_{d}$ along the path $\gamma_{d}$. If $(k_{d},z_{d}) \neq (1,1)$, then neither side involves any divergent factors, and hence we obtain $P_{\mathbf{k}}(\boldsymbol{z}) = Q^{*}_{\mathbf{k}}(\boldsymbol{z})$, which corresponds to equation \eqref{eq:main2} for $\bullet = *$ and $(k_{d},z_{d}) \neq (1,1)$. When $(k_{d},z_{d}) = (1,1)$, both sides of equation $\eqref{eq:stufflez_{d-1}}$ diverge as $v_{d} \to z_{d}$. Let $Q^{*}_{\mathbf{k};\boldsymbol{z}_{1,d-1}, {v}_{d}}(T)$ be the polynomial obtained by replacing multiple polylogarithms of the form $\Li^{}_{\mathbf{k}',1}(1/\boldsymbol{z}', 1/{v}_{d})$  in $Q^{*}_{\mathbf{k}}(\boldsymbol{z}_{1,d-1}, {v}_{d})$ with $\mathcal{L}^{+}_{\mathbf{k}', 1; 1/\boldsymbol{z}', 1/{v}_{d}}(T)$. 
Then, it follows that
\[P_{\mathbf{k};\boldsymbol{z}_{1,d-1}, {v}_{d}}(T) \sim Q^{*}_{\mathbf{k};\boldsymbol{z}_{1,d-1}, {v}_{d}}(T),\]
where $P_{\mathbf{k};\boldsymbol{z}_{1,d-1}, {v}_{d}}(T)$ is defined in equation \eqref{eq:defP}. Therefore, we have
\[\rho^{-1} \circ P^{}_{\mathbf{k};\boldsymbol{z}_{1,d-1}, {v}_{d}}(T) \sim \rho^{-1} \circ Q^{*}_{\mathbf{k};\boldsymbol{z}_{1,d-1}, {v}_{d}}(T),\]
and, in particular,
\[\lim_{v_{d} \to 1} \rho^{-1} \circ P^{}_{\mathbf{k};\boldsymbol{z}_{1,d-1}, {v}_{d}}(0) = \lim_{v_{d} \to 1}  \rho^{-1} \circ Q^{*}_{\mathbf{k};\boldsymbol{z}_{1,d-1}, {v}_{d}}(0).\]
By applying equations \eqref{eq:rL+0} and \eqref{eq:rLs-0} to the above equation, we obtain equation \eqref{eq:main2} for $\bullet = *$ and $(k_{d},z_{d}) = (1,1)$.

This completes the proof of Theorem \ref{th:main2}.

\section*{Acknowledgment}
The author is deeply grateful to Prof.\ Minoru Hirose, Prof.\ Yayoi Nakamura and Prof.\ Kentaro Ihara for their helpful comments.

\vspace{4mm}


\begin{thebibliography}{99}
\bibitem{AK} T. Arakawa, M. Kaneko, \emph{On multiple $L$-values}, J. Math. Soc. Japan $\mathbf{56}$ (2004), no.4, 967--991.
\bibitem{B} O. Bouillot, \emph{The algebra of multitangent functions}, J. Algebra $\mathbf{410}$ (2014), 148--238.
\bibitem{Br} F. C. S. Brown, \emph{Depth-graded motivic multiple zeta values} Compos. Math. $\mathbf{157}$ (2021), no.3, 529--572.
\bibitem{H} M. Hirose, \emph{An explicit parity theorem for multiple zeta values via multitangent functions}, Ramanujan J. $\mathbf{67}$ (2025), no. 4, 87.
\bibitem{Ho} M. Hoffman, \emph{The algebra of multiple harmonic series}, J. Algebra  $\mathbf{194}$ (1997), 477--495.
\bibitem{IKZ} K. Ihara, M. Kaneko, D. Zagier, \emph{Derivation and double shuffle relations for multiple zeta values}, Compos. Math. $\mathbf{142}$ (2006), no.2, 307--338.
\bibitem{J} D. Jarossay, \emph{Depth reductions for associators}, J. Number Theory $\mathbf{217}$ (2020), 163--192.
\bibitem{K} S. Kadota, \emph{On the parity result for multiple Dirichlet series}, Kyushu J. Math. $\mathbf{76}$ (2022), no. 1, 1--11.
\bibitem{KM} M. Kato, \emph{Parity result for $q$- and elliptic analogues of multiple polylogarithms}, Res. Number Theory $\mathbf{9}$ (2023), no. 2, Paper No. 45, 22 pp.
\bibitem{KNS} Y. Kusunoki, Y. Nakamura, Y. Sasaki, \emph{A functional relation for analytic continuation of a multiple polylogarithm}, Acta Arith. $\mathbf{195}$ (2020), no. 2, 131--148.
\bibitem{M} T. Machide \emph{Congruence identities of regularized multiple zeta values involving a pair of index sets}, Int. J. Number Theory $\mathbf{12}$ (2016), no.2, 409--426.\bibitem{O} K. Onodera, \emph{Generalized log sine integrals and the Mordell-Tornheim zeta values}, Trans. Amer. Math. Soc. $\mathbf{363}$ (2011), no. 3, 1463--1485.
\bibitem{P} E. Panzer, \emph{The parity theorem for multiple polylogarithms}, J. Number Theory $\mathbf{172}$ (2017), 93--113.
\bibitem{T} H. Tsumura, \emph{Combinatorial relations for Euler-Zagier sums}, Acta Arith. $\mathbf{111}$ (2004), no.1, 27--42.
\bibitem{T2} H. Tsumura, \emph{On Mordell-Tornheim zeta values}, Proc. Amer. Math. Soc. $\mathbf{133}$ (2005), no. 8, 2387--2393.
\bibitem{T3} H. Tsumura, \emph{On the parity conjecture for multiple L-values of conductor four}, Tokyo J. Math. $\mathbf{30}$ (2007), no. 1, 21--40.
\bibitem{Z} J. Zhao, \emph{Analytic continuation of multiple polylogarithms}, Anal. Math. $\mathbf{33}$ (2007), no. 4, 301--323.
\end{thebibliography}
\end{document}